\newcommand{\bbR}{{\mathbb{R}}}
\newcommand{\bbZ}{{\mathbb{Z}}}
\newcommand{\diag}{{\mathrm{diag}}}
\newcommand{\supp}{{\mathrm{supp}}}
\newcommand{\Ind}{{\mathrm{Ind}}}
\newcommand{\I}{{\mathrm{I}}}
\newcommand{\Sp}{{\mathrm{Sp}}}
\newcommand{\wit}{\widetilde}
\newcommand{\triv}{{\mathbf{1}}}
\numberwithin{equation}{section}
\newtheorem{Lem}[equation]{Lemma}
\newtheorem{Thm}[equation] {Theorem}
\newtheorem{Cor}[equation]{Corollary}
\newtheorem{Rem}[equation]{Remark}
\newtheorem{Exm}[equation]{Example}
\title
[
Degenerate Eisenstein Series
]
{
On the Images and Poles of 
Degenerate Eisenstein Series  for $GL(n, \Bbb A_\mathbb Q)$ and 
$GL(n, \Bbb R)$
} 
\author{Marcela Hanzer and Goran Mui\'c }
\address{
Department of Mathematics, 
University of Zagreb,
Bijeni\v cka 30, 10000 Zagreb,
Croatia}
\email{hanmar@math.hr}
\email{gmuic@math.hr}
\begin{document}

\begin{abstract}
I this paper we determine poles in the right--half plane and images of degenerate Eisenstein
series for $GL_n(\Bbb A_\mathbb Q)$ induced from a character on a
maximal parabolic subgroup. We apply those results to determine poles of  
degenerate Eisenstein series for $GL_n(\Bbb R)$.
\end{abstract}

\subjclass[2000]{11F70, 22E55}
\keywords{degenerate Eisenstein series, intertwining operators, $L$--functions}

\maketitle
\section{Introduction}

Since the seminal works by Selberg \cite{Sel1} and \cite{Sel2},
Eisenstein series have played an important role 
in the theory of automorphic forms. For the application of the adelic Eisenstein series 
to the trace formula one can consult recent fundamental works of 
Arthur \cite{Arthur-1} and \cite{Arthur-2}.  In the classical settings of a Lie group \cite{Langl} the Eisenstein series 
are also important especially in the recent applications to the
problems studied by the analytic theory of automorphic forms \cite{gold}.
The degenerate Eisenstein series for classical groups were used by
Piatetski--Shapiro and Rallis \cite{PR} to construct 
the principal $L$--functions on 
classical groups. Since then they have been used in the theory of
automorphic forms. Inspired by that work, the 
authors study generalization of their construction for classical symplectic and
orthogonal groups \cite{Muic1}, \cite{Muic2}, 
\cite{Han1}, 
\cite{Han2}. They are used to prove the unitarity of various isolated
local representations and for the explicit 
construction of certain classes
of square integrable automorphic representations given by \cite{Arthur-2}.

In the present paper we study the degenerate Eisenstein series for
$GL_n$ both adelically and classically on the Lie group $GL_n(\mathbb R)$. 
The Eisenstein series of that sort were 
already appeared in \cite{DGar}, and in a more classical language in
\cite{gold}.  The square--integrable automorphic forms were known from the fundamental works of Jacquet \cite{J1} 
and M\oe glin and Waldspurger \cite{MW}. Our main motivation comes
from the other two sources. First, although there exists a theory of Eisenstein series
on real reductive groups \cite{Langl} for some time, the poles of such
Eisenstein series, although important for the application in number theory, 
are almost a complete mystery. In this paper we will use adelic
methods to settle this in part. Adelic methods are more suitable for
computation of the constant term of an Eisenstein series, and usage of
representation theory in determination of poles of the same. The
application of such results to Eisenstein series on $GL_n(\mathbb R)$
is not straightforward as we explain below.  The other motivation 
for studying of (adelic) degenerate Eisentein series comes from
possible application in the theory of cohomology of discrete subgroups \cite{JS}.

We explain our results. We denote by $\Bbb A=\Bbb A_\mathbb Q$  the
ring of adeles of $\Bbb Q$. 
We fix two integers $m, n \ge 1$ such that $m\le n$. 
Let $\chi, \mu: \Bbb Q^\times\backslash  \Bbb A^\times\longrightarrow  \Bbb C^\times$ be unitary
 gr\" ossen characters. 
We consider the usual induced representation:
$$
I(s)\overset{def}{=}\Ind_{P_{m, n}(\Bbb A)}^{GL_{m+n}(\Bbb A)}\left(|\det |^{s/2}\chi \otimes |\det |^{-s/2}\mu\right),
$$
where $P_{m, n}$ is the block upper--triangular parabolic subgroup of $GL_{m+n}$ having a Levi subgroup isomorphic to 
$GL_m\times GL_n$. Let $f_s\in I(s)$ be a holomorphic section, then for 
$Re(s) > \frac{m+n}{2}$, the degenerate Eisenstein series 
$$
E(f_s)(g)\overset{def}{=}\sum_{\gamma \in P_{m, n}(\Bbb Q)\backslash GL_{m+n}(\Bbb Q)}
f_s(\gamma \cdot g)
$$
 converges absolutely and uniformly. It continues 
to a function which is meromorphic in $s$. Outside of poles, it 
is an automorphic form.

The main results about adelic degenerate Eisenstein series are the
following (stated in Section \ref{main-res}).  The first result 
is the following (see Theorem \ref{main-res-1}):

\begin{Thm}\label{a-int-1} Let $s>0$ be a real number. Assume that $m\le n$. Then, the degenerate Eisenstein
  series (\ref{de-1}) is  holomorphic and non--zero for all $s\not \in
  \{\frac{m+n}{2}-\alpha; \ \  \alpha\in \Bbb Z,  \ \ 0\le \alpha<
  \frac{m+n}{2} \}$. Moreover, the map  $f_s\mapsto E(f_s)$ is an embedding in the space of automorphic forms.
\end{Thm}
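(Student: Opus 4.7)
The plan is to reduce to a constant-term calculation and pin down the poles of the intertwining operator that appears. First, since the inducing parabolic $P_{m,n}$ is maximal, the constant term of $E(f_s)$ along $P_{m,n}$ (and along its associate $P_{n,m}$, when $m<n$) is governed by just two double cosets and produces the intertwining term $M(s)f_s$, where $M(s)$ is the standard global intertwining operator from $I(s)$ into the induced representation obtained by swapping the blocks and replacing $s$ by $-s$. Poles of an Eisenstein series being detected by poles of its constant term, holomorphy of $E(f_s)$ at $s_0$ reduces to holomorphy of $M(s)$ at $s_0$.

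Next I would factor $M(s) = r(s)\cdot N(s)$ in the Langlands--Shahidi fashion, where $N(s) = \otimes_v N_v(s)$ is a product of normalized local intertwining operators and the global normalizing ratio, computed via Gindikin--Karpelevich at the unramified places, is
\[
r(s) \;=\; \frac{L(s,\chi\mu^{-1})}{L(s+1,\chi\mu^{-1})}.
\]
For $Re(s)>0$ the denominator is holomorphic and non-vanishing, and the numerator is holomorphic except for the possible simple pole of the Riemann zeta function at $s=1$ (when $\chi=\mu$). All remaining potential poles of $M(s)$ must come from the normalized local operators $N_v(s)$ at the archimedean and finitely many ramified places.

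The main obstacle, I expect, is pinning down these local contributions: one must show that every potential pole of $M(s)$ in $Re(s)>0$ lies in the arithmetic progression $\{(m+n)/2-\alpha : \alpha \in \bbZ,\ 0 \le \alpha < (m+n)/2\}$. This requires a detailed analysis of the composition series and reducibility points of the degenerate principal series $I_v(s)$ at each local place, together with a local--global matching to check that zeros of $N_v(s)$ cancel any pole of $r(s)$ falling outside the stated progression.

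Finally, the embedding follows from a standard constant-term argument: if $E(f_{s_0})=0$ at some $s_0>0$ outside the exceptional locus, then the constant term vanishes. Since $s_0>0$ the two summands in the constant term lie in induced representations with distinct central characters of the Levi subgroup, so they must vanish separately, yielding $f_{s_0}=0$. This is the embedding into the space of automorphic forms, and in particular the non-vanishing assertion.
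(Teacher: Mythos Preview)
Your overall strategy---compute the constant term, factor the intertwining operator into a global normalizing ratio times normalized local operators, and read off the poles---is sound and is essentially what the paper does (the paper works along the Borel rather than $P_{m,n}$, but that is a cosmetic difference). The embedding argument at the end is also correct. However, there is a genuine error in the middle that derails the analysis.

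The normalizing ratio is \emph{not} $L(s,\chi\mu^{-1})/L(s+1,\chi\mu^{-1})$. That formula is correct only for $m=n=1$. For general $m\le n$, the unipotent radical of $P_{m,n}$ has $mn$ roots, and the Gindikin--Karpelevi\v c computation produces a product of $mn$ local zeta ratios which, after telescoping, becomes
\[
r(s)^{-1}\;=\;\prod_{i=1}^{m}\frac{L\!\left(s-\tfrac{m+n}{2}+i,\ \chi\mu^{-1}\right)}{L\!\left(s+\tfrac{n-m}{2}+i,\ \chi\mu^{-1}\right)}.
\]
(Equivalently: the Langlands--Shahidi $L$-function here is the Rankin--Selberg $L$-function for the pair of one-dimensional representations $\chi\circ\det_m$ and $\mu\circ\det_n$, whose $L$-parameters are $m$- and $n$-dimensional, so you get a product of shifted Hecke $L$-functions, not a single one.) The denominator is harmless for $s>0$ and $m\le n$, but the numerator has a pole (when $\chi=\mu$) whenever $s-\tfrac{m+n}{2}+i\in\{0,1\}$ for some $1\le i\le m$. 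This is exactly the source of the excluded set $\{\tfrac{m+n}{2}-\alpha:\alpha\in\Bbb Z,\ 0\le\alpha<\tfrac{m+n}{2}\}$.

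You have the roles reversed: you place the burden on the normalized local operators $N_v(s)$ and treat the global ratio as almost harmless. In fact the opposite is true. Once $s-\tfrac{m+n}{2}\notin\Bbb Z$, every rank-one $GL_2$ principal series appearing in the factorization of $N_v(s)$ is irreducible, so each rank-one normalized operator is holomorphic (this is the easy Lemma~\ref{proof-main-res-1} in the paper); no detailed study of the composition series of $I_v(s)$ is needed. All poles in the right half-plane come from the product of global $L$-functions above. With the correct normalizing factor your argument goes through immediately, and the ``main obstacle'' you anticipated disappears.
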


This solves the problem of understanding the behavior of degenerate
Eisenstein series on a generic set. A more specific information is 
contained in the following two theorems (see Theorems \ref{main-res-2}
and \ref{main-res-4}):

\begin{Thm}\label{a-int-2} Assume that $m\le n$. Then, the degenerate Eisenstein
  series (\ref{de-1}) with $\mu=\chi$ ($=$ to the trivial character
  without loss of the generality), for $s \in
  \{\frac{m+n}{2}-\alpha; \ \  \alpha\in \Bbb Z,  \ \ 0\le \alpha<
  \frac{m+n}{2} \}$,  has at most
  simple pole, and $f_s\mapsto (s-s_0)E(f_s)|_{s=s_0}$  is the unique spherical irreducible component in 
$\Ind_{P_{m, n}(\Bbb A)}^{GL_{m+n}(\Bbb A)}\left(|\det |^{s/2}
  \otimes |\det |^{-s/2}\right)$. Moreover, the pole occurs precisely
  for $1\le \alpha \le m-1$.
\end{Thm}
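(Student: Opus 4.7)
The strategy is to reduce the pole analysis of $E(f_s)$ to that of the global standard intertwining operator via the constant term along $P_{m,n}$. Because $P_{m,n}$ is maximal, Langlands' constant term formula specializes to
$$
E(f_s)_{P_{m,n}} = f_s + M(w, s)\, f_s,
$$
where $M(w, s)\colon I(s) \to \Ind_{P_{n,m}(\bbA)}^{\GL_{m+n}(\bbA)}(|\det|^{-s/2} \otimes |\det|^{s/2})$ is the intertwining operator attached to the Weyl element $w$ swapping the two block factors. Combining this with Theorem \ref{a-int-1}, which constrains the possible poles of $E(f_s)$ in $s > 0$ to the set $\{s_0 = (m+n)/2 - \alpha\}$, and with the holomorphy of $f_s$ in $s$, the pole analysis of $E(f_s)$ is reduced to the pole analysis of $M(w, s) f_s$.

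Next, I would factor the intertwining operator via Langlands--Shahidi normalization, $M(w, s) = r(s)\, N(w, s)$. The global factor $r(s)$ is a ratio of completed Rankin--Selberg $L$-functions attached to the adjoint action of $M = \GL_m \times \GL_n$ on $\Lie(U_{P_{m,n}}) \cong V_m \otimes V_n^{\ast}$, evaluated on the inducing character $|\det|^{s/2} \otimes |\det|^{-s/2}$. Because this datum is a character and $\chi = \mu = \triv$, $r(s)$ unwinds into a product of shifted completed Riemann zeta functions $\Lambda$. Using that $\Lambda$ has only simple poles at $0, 1$ and no zeros on $\mathrm{Re}(s) \ge 1$, a combinatorial telescoping pins down the poles of $r(s)$ in the right half-plane: they are simple and are located precisely at $s_0 = (m+n)/2 - \alpha$ for $1 \le \alpha \le m-1$. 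A standard local analysis then ensures that $N(w, s)$ is holomorphic on the right half-plane and that at each such $s_0$ its image is the unique spherical irreducible subquotient of the target.

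For the residue identification, the residue $\mathrm{Res}_{s=s_0} M(w, s)$ is a non-zero $\GL_{m+n}(\bbA)$-equivariant map whose image is the unique spherical irreducible subquotient of the target. Combining this with the constant-term computation and the general principle that an automorphic form is determined by its constant terms along all parabolics together with its cuspidal support, the residue map $f_s \mapsto (s - s_0) E(f_s)|_{s=s_0}$ factors as $I(s_0) \twoheadrightarrow (\textrm{unique spherical irreducible subquotient}) \hookrightarrow \calA$, where $\calA$ denotes the space of automorphic forms. Uniqueness of the spherical component is a standard property of the composition series of degenerate principal series on $\GL_n$.

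The principal obstacle is Step 2: explicitly computing $r(s)$, verifying that the telescoping of the $\Lambda$-ratios isolates exactly the range $1 \le \alpha \le m-1$, and confirming that the normalized operator $N(w, s)$ neither introduces additional singularities on the right half-plane nor vanishes at the claimed pole locations. These points rely on detailed input from the local representation theory of degenerate principal series of $\GL_{m+n}$ at each candidate $s_0 = (m+n)/2 - \alpha$.
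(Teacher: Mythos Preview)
Your approach has a fundamental gap at the very first step. The formula
\[
E(f_s)_{P_{m,n}} \;=\; f_s \;+\; M(w,s)\,f_s
\]
is the Langlands constant-term formula for Eisenstein series induced from \emph{cuspidal} data on the Levi. Here the inducing datum is the character $|\det|^{s/2}\otimes|\det|^{-s/2}$ on $M_{m,n}=GL_m\times GL_n$, and a character of $GL_m(\bbA)$ is never cuspidal once $m\ge 2$: its constant term along any proper parabolic of $GL_m$ is again the character, hence non-zero. Consequently the double-coset analysis $P_{m,n}\backslash GL_{m+n}/P_{m,n}$ (which has $\min(m,n)+1$ cells, not two) contributes additional terms to $E(f_s)_{P_{m,n}}$, each of which is an Eisenstein series on $M_{m,n}$ rather than a single intertwining operator applied to $f_s$. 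Your two-term expression is correct only when $m=n=1$.

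This is not a cosmetic issue. If one naively tracks $M(w_0,s)$ alone using completed $L$-functions, one finds
\[
M(w_0,s)f_0 \;=\; \Bigl(\prod_{i=1}^{m}\frac{\Lambda(s-\tfrac{m+n}{2}+i)}{\Lambda(s+\tfrac{n-m}{2}+i)}\Bigr)\,f_{w_0,0},
\]
and for $1\le\alpha\le m-1$ the numerator hits both $\Lambda(0)$ and $\Lambda(1)$, giving an apparent pole of order \emph{two}. The reduction to a simple pole is not a property of the single operator $M(w_0,s)$; it is obtained in the paper only after taking the constant term along the Borel subgroup (where the data \emph{is} cuspidal), grouping the resulting Weyl elements into orbits $[w]$ under $w'(\Lambda_s)=w(\Lambda_s)$, and carrying out the delicate cancellation
\[
\sum_{w'\in[w]} r(\Lambda_{s},w')^{-1}
\]
of Lemmas \ref{lem:norm_factors_maximal}--\ref{lem:B(t)}. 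That cancellation is precisely the ``difficult combinatorial analysis'' the introduction warns about, and your outline does not engage with it. Your subsequent claims (that $r(s)$ has simple poles exactly for $1\le\alpha\le m-1$, and that $N(w,s)$ is holomorphic and non-vanishing) are therefore unsupported as stated: the first is false for the single element $w_0$, and the second, while true (Lemma \ref{proof-main-res-2-3}), does not by itself control the pole order.
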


\begin{Thm}\label{a-int-3} Assume that $m\le n$. Then, for $\mu\neq
  \chi$ and $s \in
  \{\frac{m+n}{2}-\alpha; \ \  \alpha\in \Bbb Z,  \ \ 0\le \alpha<
  \frac{m+n}{2} \}$,   the  map (\ref{de-100}) is an embedding.
The degenerate Eisenstein series (\ref{de-1}) is  holomorphic if
  $\chi_\infty\mu^{-1}_\infty$ is not a
sign character, or if  $\alpha\le m$.
\end{Thm}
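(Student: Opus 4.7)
The plan is to study $E(f_s)$ through its constant term along the parabolics in the associate class of $P_{m,n}$. For $m<n$, the only proper associate of $P_{m,n}$ is $P_{n,m}$; the contributing double cosets are the identity and the long Weyl element, so the constant term along $P_{m,n}$ is $f_s$ and along $P_{n,m}$ is $M(s)f_s$, where $M(s)\colon I(s)\to \Ind_{P_{n,m}(\bbA)}^{GL_{m+n}(\bbA)}(|\det|^{-s/2}\mu\otimes|\det|^{s/2}\chi)$ is the standard global intertwining operator. When $m=n$ both pieces combine in the $P_{m,m}$-constant term as $f_s+M(s)f_s$. Holomorphy of $E(f_s)$ at $s_0$ is therefore equivalent to holomorphy of $M(s)f_s$, and the embedding claim follows: at a point of holomorphy the identity coset already recovers $f_{s_0}$ in the constant term, so the section map cannot have kernel, while at a possible pole the same argument applied to the leading Laurent coefficient, combined with non-vanishing of the corresponding residue operator on the unique irreducible quotient of $I(s_0)$, provides injectivity.

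Factoring $M(s)=\otimes_v M_v(s)$ and applying the Gindikin--Karpelevich formula at unramified $v$ expresses the partial operator as a ratio of completed Hecke $L$-functions attached to $\chi\mu^{-1}$, at arguments shifted linearly in $s$. The decisive observation is that $\chi\ne\mu$ forces $\chi\mu^{-1}$ to be a nontrivial id\`ele class character, so $L(s,\chi\mu^{-1})$ is entire; this removes the mechanism producing genuine poles in the $\chi=\mu$ situation of Theorem \ref{a-int-2}. What remains is the behavior of the normalized local operators at a finite set $S$ of bad places including $\infty$. At finite $v\in S$, standard results for normalized intertwining operators between degenerate principal series of $GL_m(\bbQ_v)\times GL_n(\bbQ_v)$ rule out poles in the real range $0<s\le (m+n)/2$.

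The entire analysis thus concentrates at the archimedean place. The unnormalized $M_\infty(s)$ is a product of $\Gamma_{\bbR}$-factors whose shifts and parities are dictated by the quasi-character $\chi_\infty\mu_\infty^{-1}$. If $\chi_\infty\mu_\infty^{-1}$ is not a sign character, then being unitary it must have the form $|\cdot|^{iy}\sgn^{\varepsilon}$ with $y\ne 0$; the corresponding $\Gamma_{\bbR}$-factors have no real poles or zeros, so $M_\infty(s)$ is holomorphic along the real axis, in particular at every $s_0=(m+n)/2-\alpha$. If $\chi_\infty\mu_\infty^{-1}\in\{\triv,\sgn\}$, the $\Gamma_{\bbR}$-factors can be singular; a direct tracking of the shifts induced by $\delta_{P_{m,n}}^{1/2}$ and by the induction-in-stages decomposition of the one-dimensional Levi representation shows that the first obstruction occurs at shift index $m+1$, i.e.\ at $\alpha=m+1$, whence the condition $\alpha\le m$ forces holomorphy.

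The main obstacle is this archimedean bookkeeping: one must identify precisely the parities and shifts of the $\Gamma_{\bbR}$-factors appearing in the Gindikin--Karpelevich product attached to the one-dimensional representation $|\det|^{s/2}\chi_\infty\otimes|\det|^{-s/2}\mu_\infty$ of $GL_m(\bbR)\times GL_n(\bbR)$, verify that the threshold $\alpha=m+1$ in the sign-character case is sharp (no cancellation against numerator $L$-factors shifts it lower), and control the normalized local operator at this threshold. The remaining ingredients --- the constant term formula, the entirety of $L(s,\chi\mu^{-1})$, and the identity-coset embedding --- are essentially standard.
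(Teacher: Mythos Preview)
The central gap is your claim about the constant term along the maximal parabolics. You assert that the constant term of $E(f_s)$ along $P_{m,n}$ is $f_s$ and along $P_{n,m}$ is $M(s)f_s$. That formula is correct when the inducing datum on the Levi $M_{m,n}=GL_m\times GL_n$ is \emph{cuspidal}, but a character of $GL_k$ is cuspidal only for $k=1$. For $m\ge 2$ the double coset set $P_{m,n}\backslash G/P_{m,n}$ (and likewise $P_{m,n}\backslash G/P_{n,m}$) has $\min(m,n)+1$ classes, so the constant term has that many terms; the intermediate ones are Eisenstein series on the Levi and can carry their own poles. Hence your equivalence ``holomorphy of $E(f_s)\Longleftrightarrow$ holomorphy of $M(s)f_s$'' is not established, and the reduction to the single long intertwining operator collapses.

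The paper avoids this by taking the constant term along the Borel, which has $\binom{m+n}{m}$ terms indexed by the set (\ref{ct-1}). Since $\chi\neq\mu$, Lemma~\ref{ct-6} makes every global normalizing factor $r(\Lambda_s,w)^{-1}$ holomorphic, so the question becomes holomorphy of each local normalized operator $\mathcal N(\Lambda_{s,p},\tilde w)$ on the degenerate principal series. At a finite $p$: if $\chi_p=\mu_p$ this is Lemma~\ref{proof-main-res-2-3}; if $\chi_p\neq\mu_p$ every rank--one $GL_2$ factor in the decomposition acts on an irreducible principal series and is holomorphic. At $p=\infty$ with $\chi_\infty\mu_\infty^{-1}$ the sign character some rank--one factors may be reducible; the paper then passes, via induction in stages, to the operator (\ref{dldldldl-1}) on a $GL_{m-i}(\bbR)$--character times a $GL_1$--twist and invokes Howe--Lee irreducibility to get holomorphy, \emph{except} when $\tfrac{-s+(n-1)}{2}-j>\tfrac{s+(m-1)}{2}$, which forces $\alpha>m$. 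In that residual case the local induced representation (\ref{dldldldl-2}) is irreducible at every place (Howe--Lee at $\infty$, Zelevinsky or Lemma~\ref{proof-main-res-2-1} at finite $p$), so the global induced representation is irreducible and the argument of Lemma~\ref{proof-main-res-2-5} yields the embedding regardless of any pole.

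Two further issues in your sketch: (a) describing $M_\infty(s)$ as ``a product of $\Gamma_{\bbR}$-factors'' is only its effect on the spherical vector; to control the operator on a general $f_\infty$ you need representation--theoretic input (irreducibility of the relevant induced modules), which is precisely the role of Howe--Lee in the paper's argument; (b) ``not a sign character'' does not imply $\chi_\infty\mu_\infty^{-1}=|\cdot|^{iy}\sgn^{\varepsilon}$ with $y\neq 0$: since only $\chi\neq\mu$ globally is assumed, one may have $\chi_\infty=\mu_\infty$, and that case must be treated separately (the paper does so via Lemma~\ref{proof-main-res-2-3}).
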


The proofs of theorems use  the usual strategy of computing constant terms along the standard Borel subgroup 
via normalized intertwining operators \cite{Sh1}, \cite{Sh2}.  But
one feature of  working with Eisenstein series that do not have 
square--integrable residues is unpleasant fact that they have huge
constant terms.

For example, if in  Theorem \ref{a-int-2}
we select $m=n$ and $\alpha=0$, then as a residue we obtain a trivial
representation of $GL_{m+n}(\Bbb A)$ which 
has just one term in its constant term. 
But for $\alpha\ge 1$ the situation is completely different: except
the longest element of the Weyl group much 
more Weyl group elements give a
non--trivial contribution. With this problem, the two usual
subproblems which should be considered,  
the computation of poles and images of local intertwining operators, and
computation and cancellation of the poles of the sums of 
global normalizing factors, are considerable more difficult.  

Thanks to the sufficiently developed local theory of degenerate
principal series due to Howe--Lee \cite{HL} 
(for $GL_n(\mathbb R)$) and Zelevinsky \cite{Z}  (for $GL_n(\mathbb
Q_p)$), we are able to find the elegant treatment to the problem of determining the poles 
and images of local normalized intertwining operators which diverge
from the usual heavily combinatorial computations. A very difficult combinatorial analysis  is still
 required to solve the problem of determination and  cancellation of the
 poles of the sums of global normalizing factors in the proof of 
 Theorem \ref{a-int-2} (Sections \ref{ct3000}).  The proof of Theorem
 \ref{a-int-1} is given in Section \ref{ct}, 
Theorem \ref{a-int-2}  is proved in Sections \ref{ct3000} and
\ref{conclusion}, and Theorem \ref{a-int-2} is proved in Section \ref{aaaaaaaaaaaaaaaaaaaaa}.

In Section \ref{cong-subgrps}, we consider a family of congruence subgroups   $\Gamma_0(N)$, $N\ge 1$, 
consisting of all matrices in $GL_{m+n}(\Bbb Q)$ 
$$
\left(\begin{matrix}a& b\\ c & d\end{matrix}\right)
$$
such that $a \in M_{m\times m}(\Bbb Z)$,  $b\in M_{m\times n}(\Bbb
Z)$, $d \in M_{n\times n}(\Bbb Z)$, and $c\in N \cdot M_{n\times m}(\Bbb Z)$,
and having determinant equal to $\pm 1$. Of course, $\Gamma_0(N)$
depends also on $m$ and  $n$ but we suppress this from the notation.

Their adelic framework is explained in Section \ref{cong-subgrps}. 
We use it to calculate restriction of adelic degenerate Eisenstein series to  $GL_{m+n}(\Bbb R)$ (see 
Section \ref{real-e}) for certain very specific choices of $f_s\in
I(s)$. We obtain the following degenerate Eisenstein series on
$GL_{m+n}(\mathbb R)$ 
$$
E_\infty(s, f_\infty)(g_\infty)=\sum_{\gamma \in \Gamma_0(N)\cap P_{m, n}(\Bbb Q)\backslash 
\Gamma_0(N)}   f_\infty(\gamma \cdot g_\infty), 
$$
where $f_\infty \in 
\Ind_{P_{m, n}(\Bbb R)}^{GL_{m+n}(\Bbb R)}\left(|\det |^{s/2} \otimes
  |\det |^{-s/2}\right)$.

Based on above global results, in Section \ref{main-res} we prove the following
(see Theorem \ref{main-res-3}): 

\begin{Thm}\label{int-res-3}  Let $s>0$ be a real number. Assume that
  $m\le n$. Then, we have the following:
\begin{itemize}
\item[(i)] The degenerate Eisenstein
  series $E_\infty(s, f_\infty)$, is holomorphic and non--zero for $s \not \in
  \{\frac{m+n}{2}-\alpha; \ \  \alpha\in \Bbb Z,  \ \ 0\le \alpha<
  \frac{m+n}{2} \}$.

\item[(ii)]  Let $s \in
  \{\frac{m+n}{2}-\alpha; \ \  \alpha\in \Bbb Z,  \ \ m\le \alpha<
  \frac{m+n}{2} \}$. Then, $E_\infty(s, \cdot)$ is holomorphic, and 
$E_\infty(s, f_\infty)\neq 0$   if and only if $\cal N(\Lambda_{s, \infty} ,
  \wit{w_0})f_\infty \neq 0$.   

\item[(iii)]  Let $s \in  \{\frac{m+n}{2}-\alpha; \ \  \alpha\in \Bbb Z,  \ \ 0\le \alpha\le m-1 \}$. Then, 
$E_\infty(s, \cdot)$ has at most simple  pole.

\item[(iv)] In the settings of (iii), if 
  $1\le \alpha\le m-1$, then $E_\infty(s, f_\infty)$ has a
  simple pole if and only if $\cal N(\Lambda_{s, \infty} ,
  \wit{w_0})f_\infty \neq 0$. 
\end{itemize}
\end{Thm}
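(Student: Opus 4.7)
The plan is to read off all four statements from the already-proved adelic theorems (\ref{a-int-1} and \ref{a-int-2}) by exploiting the explicit restriction identity from Section \ref{real-e}. For the distinguished family of finite sections $f_f$ built from $\Gamma_0(N)$ one has, for the global section $f_s = f_\infty \otimes f_f \in I(s)$,
\[
  E(f_s)\bigl|_{GL_{m+n}(\mathbb R)} \;=\; c\, E_\infty(s, f_\infty),
\]
with $c \neq 0$. So holomorphy, simple-pole, and non-vanishing information on the left-hand side transfers directly to $E_\infty(s, f_\infty)$, provided we can identify what "non-vanishing'' means purely at the archimedean place after dividing out the (non-zero) finite contribution.

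Part (i) is then immediate from Theorem \ref{a-int-1}: for $s > 0$ outside the exceptional set, $E(f_s)$ is holomorphic and the map $f_s \mapsto E(f_s)$ is an embedding. The section $f_f$ is chosen so that $f_s = 0$ iff $f_\infty = 0$, so the embedding property descends to $E_\infty(s, \cdot)$. Part (iii) is likewise immediate from Theorem \ref{a-int-2}: the order of the pole of $E_\infty(s, f_\infty)$ at $s_0 = \frac{m+n}{2} - \alpha$ is bounded by that of $E(f_s)$, which is at most simple, and vanishes for $\alpha \ge m$, which already gives the holomorphy half of (ii).

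For the non-vanishing criteria in (ii) and (iv) the key is the constant-term expression. At $s_0$ in the specified ranges (trivial-character case), every Weyl contribution other than that of the longest element $\wit{w_0}$ is killed, either because its global normalizing factor vanishes or because of the explicit cancellations performed in Sections \ref{ct3000} and \ref{conclusion}; Theorem \ref{a-int-2} identifies the residue (when there is a pole) with the unique spherical irreducible constituent, which is exactly the image of the global normalized intertwining operator $\mathcal{N}(\Lambda_s, \wit{w_0})$. In both (ii) (no pole, value at $s_0$) and (iv) (residue at $s_0$), one is therefore reduced to computing the action of $\mathcal{N}(\Lambda_s, \wit{w_0}) = \prod_v \mathcal{N}(\Lambda_{s, v}, \wit{w_0})$ on $f_s$. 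Since $f_f$ is unramified and lies in the line of spherical vectors, a direct local computation (using Zelevinsky's degenerate principal series analysis cited in the introduction) shows that each finite-place factor $\mathcal{N}(\Lambda_{s, p}, \wit{w_0}) f_p$ is non-zero at our $s_0$. Hence $E_\infty(s_0, f_\infty)$ (resp.\ its residue) is non-zero exactly when $\mathcal{N}(\Lambda_{s, \infty}, \wit{w_0}) f_\infty \neq 0$, proving (ii) and (iv).

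The main obstacle is the step of reducing the constant term at $s_0$ to the single $\wit{w_0}$-summand: one must show that no shorter Weyl element contributes after normalization, either to the value in the range $m \le \alpha < \tfrac{m+n}{2}$ or to the residue in the range $1 \le \alpha \le m-1$. This is precisely the combinatorial cancellation of global normalizing factors carried out in the proof of Theorem \ref{a-int-2}, together with Theorem \ref{a-int-1} to control the points outside the exceptional set. Once that reduction is granted, (ii) and (iv) follow from the factorization of $\mathcal{N}(\Lambda_s, \wit{w_0})$ over places and the non-vanishing of the finite factors on the distinguished section $f_f$.
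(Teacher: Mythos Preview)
Your reduction of (i), (iii), and the holomorphy half of (ii) to the adelic theorems via Lemma~\ref{real-e-3} is correct and matches the paper. The gap is in the non-vanishing half of (ii) and all of (iv), specifically in the sentence ``Since $f_f$ is unramified and lies in the line of spherical vectors\ldots''. This is false in general: the Eisenstein series $E_\infty(s,\cdot)$ is defined for an arbitrary level $\Gamma_0(N)$, and for $N\ge 2$ the finite components $f_p$ at primes $p\mid N$ are the Iwahori-type functions of item (iii) in Section~\ref{real-e}, supported on $P_{m,n}(\Bbb Q_p)L_p(l)$ and right $L_p(l)$-invariant. They are \emph{not} spherical, so the Gindikin--Karpelevi\v c identity (\ref{de-5000000000}) does not apply to them, and Zelevinsky's structure theory alone does not tell you whether $\mathcal N(\Lambda_{s,p},\wit{w_0})f_p$ vanishes.

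For (ii) this happens not to matter, but for a reason you did not give: when $m\le\alpha<\tfrac{m+n}{2}$ the local degenerate principal series at every place is irreducible (Lemma~\ref{proof-main-res-2-1}), so $\mathcal N(\Lambda_{s,p},\wit{w_0})$ is an isomorphism and kills nothing. For (iv), however, with $1\le\alpha\le m-1$, the local representations at $p\mid N$ are genuinely reducible and $\mathcal N(\Lambda_{s,p},\wit{w_0})$ has a nontrivial kernel; one must check that the specific non-spherical $f_p$ avoids it. The paper does this by evaluating $\mathcal N(\Lambda_{s,p},\wit{w_0})f_p$ at points $\wit{w_0}\left(\begin{smallmatrix}I_m&y\\0&I_n\end{smallmatrix}\right)$ and, after explicit manipulation of the unnormalized integral, identifying the resulting family (as $y$ varies) with Godement--Jacquet local zeta integrals for the trivial representation of $GL_m(\Bbb Q_p)$. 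The fractional ideal these generate has denominator $\prod_{i=1}^m L(s-(m+n)/2+i,1)^{-1}$, which exactly cancels the zero of the normalizing factor $r(\Lambda_{s,p},w_0)$ at $s=\tfrac{m+n}{2}-\alpha$. This Godement--Jacquet step is the missing ingredient in your argument for (iv).
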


We remark that $\cal N(\Lambda_{s, \infty},  \wit{w_0})$ is the local
normalized intertwining operator attached to the longest element of the Weyl group of 
$GL_{m+n}$ modulo that of the Levi $GL_m\times GL_n$ of $P_{m, n}$. 
If $f_\infty$ is spherical, then $\cal N(\Lambda_{s, \infty},
\wit{w_0})f_\infty \neq 0$  holds by  the Gindikin--Karpelevi\v c formula (\ref{de-5000000000}).

As we said above, Theorem \ref{int-res-3} is a consequence of above
results for adelic degenerate Eisenstein series, but the application
of those theorems is not straightforward. It proceeds in two
steps. In the first step we develop a criterion for non--vanishing of
$E(f_s)$ (ignoring the poles at this moment) considering single
$f_s\in I(s)$ which is manageable for application (see Corollary
\ref{cor-main-res-2}). Then, we apply to those specific $f_s$ used to 
construct $E_\infty(s, f_\infty)$ in Section \ref{real-e}. This gives
an interesting problem of determination whether or not certain functions
belong to the kernel of $\cal N(\Lambda_{s, p},
\wit{w_0})$ for $p$--adic places. Godement--Jacquet theory of local
zeta integrals for the principal $L$--function \cite{J} gives an elegant
solution to the problem.

By a general result of Langlands \cite{Langl}, all Eisenstein series
are holomorphic for $s=0$.  The reader will easily see that the poles of Eisenstein series for
$s< 0$ behave unpredictable even for $GL_2$. 

\vskip .2in 

We would like to thank N. Grbac, R. Howe, M. Tadi\' c, and J. Schwermer for some useful
discussions. The final version of this paper is prepared while we were visitors of  the Erwin
Schr\" odinger Institute in Vienna as a part of the Erwin
Schr\" odinger Institute programme Research in Teams. We would
like to thank J. Schwermer and the Institute for their hospitality.

\section{Degenerate Eisenstein Series and Their Constant Terms}\label{de}

We denote by $\Bbb A$  the ring of adeles of $\Bbb Q$. Let $K_p$ be defined by $GL_n(\Bbb Z_p)$, where
$\Bbb Z_p$ is the ring of integers of $\Bbb Q_p$. Then $K_p$ is a maximal compact subgroup 
of $GL_n(\Bbb Q_p)$. The group  $K_\infty=O(n)$  is a maximal compact subgroup of $GL_n(\Bbb R)$. Then the product 
$$ K=K_\infty\times \prod_{p} K_p$$
is a maximal compact subgroup of $GL_n(\Bbb A)$.

Let $m, n \ge 1$. We denote by $T=T_{m+n}$, $U=U_{m+n}$,  and $B_{m+n}$
the group of diagonal, upper triangular unipotent matrices, and  upper
triangular matrices in  $GL_{m+n}$. We denote by $W=W_{m+n}$ the Weyl
group of $T$. 
The group $W$ is isomorphic to the group of permutations 
of $m+n$--letters. For us it will be important to choose the representatives appropriately: 
we follow the convention introduced by Shahidi \cite{Sh1}, \cite{Sh2}. For $w\in W$, we denote by 
$\widetilde{w}$ its representative. Let $\Gamma$ be the set of roots of $T$ in $GL_{m+n}$. 
The set of roots which determine $U$ we denote by $\Sigma^+$. We denote by 
$\Delta\subset \Sigma^+$ uniquely determined set of simple roots.

We will work mainly with the following parabolic subgroup of $GL_{m+n}$. Let $P_{m, n}$ be the parabolic 
subgroup of  $GL_{m+n}$ which is given by its Levi decomposition  $P_{m+n}= M_{m+n} U_{m+n}$, where 
\begin{align*}
&M_{m, n}(\Bbb Q)=\left\{\left(\begin{matrix}a& 0\\ 0 & d\end{matrix}\right); \ \ a\in GL_m(\Bbb Q), 
\ \ d\in GL_n(\Bbb Q) \right\} \ \ \text{is a a Levi factor, and}\\
& U_{m, n}(\Bbb Q)=\left\{\left(\begin{matrix}I_m& b\\ 0 & I_n\end{matrix}\right); \ \ b\in M_{m\times n}(\Bbb Q)
\right\} \ \ \text{is the unipotent radical.}
\end{align*}

Let $\chi, \mu: \Bbb Q^\times\backslash  \Bbb A^\times\longrightarrow  \Bbb C^\times$ be unitary
 gr\" ossen characters. 
We consider the usual induced representation:
$$
I(s)\overset{def}{=}\Ind_{P_{m, n}(\Bbb A)}^{GL_{m+n}(\Bbb A)}\left(|\det |^{s/2}\chi \otimes |\det |^{-s/2}\mu\right)
$$
on the space of all $C^\infty$ and right $K$--finite functions 
$f: GL_{m+n}(\Bbb A)\longrightarrow \Bbb C$
which satisfy
\begin{align*}
&f\left(\left(\begin{matrix}a& 0\\ 0 & d\end{matrix}\right)
\left(\begin{matrix}I_m& b\\ 0 & I_n\end{matrix}\right) g\right)=
|\det a|^{s/2}\chi(\det a) |\det d|^{-s/2}\mu(\det d)
\delta_{P_{m, n}}^{1/2}\left(\left(\begin{matrix}a& 0\\ 0 & d\end{matrix}\right)\right)f(g), \\
&\text{where} \ \ 
\left(\begin{matrix}a& 0\\ 0 & d\end{matrix}\right)\in M_{m, n}(\Bbb A), \ 
\left(\begin{matrix}I_m& b\\ 0 & I_n\end{matrix}\right) \in U_{m, n}(\Bbb A), \
g\in  GL_{m+n}(\Bbb A).
\end{align*}

We  construct holomorphic sections $f_s\in I(s)$ using the compact
picture produced by the above choice of a maximal compact subgroup
$K$.  When $Re(s) > \frac{m+n}{2}$, the degenerate Eisenstein series 
\begin{equation}\label{de-1}
E(f_s)(g)\overset{def}{=}\sum_{\gamma \in P_{m, n}(\Bbb Q)\backslash GL_{m+n}(\Bbb Q)}
f_s(\gamma \cdot g)
\end{equation}
 converges absolutely and uniformly  in $(s, g)$ on compact sets. This
 is proved by the restriction to $GL_{m+n}(\mathbb R)$ and then applying
 Godement's theorem as in (\cite{Borel1966}, 11.1 Lemma). In particular, it
 has no poles for $Re(s) > \frac{m+n}{2}$. 

It continues 
to a function which is meromorphic in $s$. Outside of poles, it 
is an automorphic form. As usual and more convenient for computations,
we write $E(s,f)$ instead of $E(f_s)$; in this notation $s$ signals
that $f\in I(s)$.

We say that  $s_0 \in \mathbb C$ is a pole of the degenerate Eisenstein
series $E(s, \cdot)$ if there exists $f\in I(s)$ such that $E(s, f)$
has a pole at $s=s_0$ (for some choice of $g\in GL_{m+n}(\Bbb A)$). If $s_0 \in \mathbb C$, then there exists an
integer $l=l_{s_0}\ge 0$ such that $(s-s_0)^lE(s, f)$ is holomorphic
at $s=s_0$ for all $f\in I(s)$, and for one of them the resulting
function is non--zero. The reader may observe that we allow here $l=0$
which means that no pole occurs at $s_0$. In any case, the map 
\begin{multline}\label{de-100}
\begin{CD}
\Ind_{P_{m, n}(\Bbb A)}^{GL_{m+n}(\Bbb A)}\left(|\det |^{s_0/2}\chi
  \otimes |\det |^{-s_0/2}\mu\right)\\  @>f\mapsto  (s-s_0)^lE(s, f)>>
\cal A\left(GL_{m+n}(\Bbb Q)\backslash GL_{m+n}(\Bbb A)\right)_{\chi^m
  \mu^n|\ |^{\frac{s_0}{2}(m-n)}}
\end{CD}
\end{multline}
is an intertwining operator for the action of 
$\left(\mathfrak{gl}(m+n), K_\infty\right)\times
\prod_{p}GL_{m+n}(\Bbb Q_p)$ in the space of automorphic forms
attached to the central character $\chi^m \mu^n |\ |^{\frac{s_0}{2}(m-n)}$.

The poles of the Eisenstein series are the same as the poles of its constant term along 
the minimal parabolic subgroup:
\begin{equation}\label{de-2}
E_{const}(s,f)(g)= \int_{U(\Bbb Q)\setminus U( \Bbb A)} E(s,f)(ug)du.
\end{equation}
The integral in (\ref{de-2}) can be computed by the standard unfolding.
To explain this we introduce some more notation. We let 
\begin{multline}\label{de-4}
\Lambda_s=\chi |\ |^{\frac{s-(m-1)}{2}}\otimes  \chi |\ |^{\frac{s-(m-1)}{2}+1}\otimes
\cdots \otimes \chi |\ |^{\frac{s+(m-1)}{2}}\otimes\\
\otimes \mu |\ |^{\frac{-s-(n-1)}{2}}\otimes  \mu |\ |^{\frac{-s-(n-1)}{2}+1}\otimes
\cdots \otimes \mu |\ |^{\frac{-s+(n-1)}{2}}, \ \ s\in \Bbb C.
\end{multline}
In this way, we obtain a character 
$$
T(\Bbb Q)\setminus T(\Bbb A)\rightarrow \Bbb C^\times.
$$
We extended trivially across $U(\Bbb A)$ and we induce up to the principal series 
$$
\Ind_{T(\Bbb A)U(\Bbb A)}^{GL_{m+n}(\Bbb A)}(\Lambda_s).
$$
We denote by $\overline{U}$ the lower unipotent triangular matrices in $GL_{m+n}$.
Let $w\in W$. Then, the global intertwining operator
$$
M(\Lambda_s, w): \Ind_{T(\Bbb A)U(\Bbb A)}^{GL_{m+n}(\Bbb A)}(\Lambda_s)\longrightarrow 
\Ind_{T(\Bbb A)U(\Bbb A)}^{GL_{m+n}(\Bbb A)}(w(\Lambda_s)),
$$
defined by 
$$
M(\Lambda_s, w)
f =\int_{U(\Bbb A)\cap
w\overline{U}(\Bbb A)w^{-1}}f(\wit{w}^{-1}ug)du
$$
does not depend on the choice of the representative   for
$w$ in $GL_{m+n}(\Bbb Q)$. 

The global intertwining operator
factors into product of local intertwining operators 
$$
M(\Lambda_s, w)f=\otimes_{p\le \infty} A(\Lambda_{s, p} ,\wit{w})f_p.
$$
There is a precise way of normalization of Haar measures used in the 
definition of intertwining operators 
\cite{Sh1}, \cite{Sh2}. Summary can be found in (\cite{Muic1}, Section 2) or 
(\cite{Muic2}, Section 2). The same is with the normalization factor which we explain 
next. The normalization factor for $A(\Lambda_{s, p}, \wit{w})$ is defined by

$$
r(\Lambda_{s, p} , w)=
\prod_{\alpha\in \Sigma_+,  w(\alpha)<0}\frac{ 
L(1, \Lambda_{s, p} \circ\alpha^\vee)\epsilon(1, \Lambda_{s, p}  \circ\alpha^\vee, \psi_v)}{
L(0, \Lambda_{s, p} \circ\alpha^\vee)},$$
where $\alpha^\vee$ denotes the coroot corresponding to the root
$\alpha,$ and $\psi_v$ is an non-degenerate additive character of $\Bbb
Q_p.$
We define the normalized intertwining operator by the following formula:
$$
\cal N(\Lambda_{s, p} , \wit{w})=r(\Lambda_{s, p} , w)
A(\Lambda_{s, p} , \wit{w}).
$$
 Properties of normalized intertwining operators can be found in \cite{Sh1}, \cite{Sh2}.
Again, the summary can be found in (\cite{Muic2}, Theorem 2-5). 

Let us write $\beta$ for the simple root such that $\Delta-\{\beta\}$
determines $P_{m, n}$.  Now, the constant term has the 
following expression (\cite{Muic1}, Lemma 2.1):
$$
E_{const}(s, f)(g)=\sum_{w\in W, \  w(\Delta\setminus\{\beta\})>0} 
M(\Lambda_s, w)f(g)
=\sum_{w\in W, \  w(\Delta\setminus\{\beta\})>0} 
\int_{U(\Bbb A)\cap w\overline{U}(\Bbb A)w^{-1}}f(\wit{w}^{-1}ug)du,
$$
where, by induction in stages, we identify
\begin{equation}\label{de-30000000000000}
f\in \Ind_{P_{m, n}(\Bbb A)}^{GL_{m+n}(\Bbb A)}\left(|\det |^{s/2}\chi \otimes 
|\det |^{-s/2}\mu\right)
\subset 
\Ind_{T(\Bbb A)U(\Bbb A)}^{GL_{m+n}(\Bbb A)}(\Lambda_s).
\end{equation}
This formula can be more refined up to its final form that we use. Let $S$ be the 
finite set of all places including $\infty$ such that for $p\not\in S$ we have that
$\chi_p$, $\mu_p$, $\psi_p$, and $f_p$ are unramified. Then
 we have the following expression:
\begin{equation}\label{de-3}
E_{const}(s, f)(g)= \sum_{w\in W, \  w(\Delta\setminus\{\beta\})>0} r(\Lambda_{s} , w)^{-1}
\left(\otimes_{p\in S} \cal N(\Lambda_{s, p} , \wit{w})f_p\right)\otimes 
\left(\otimes_{p\not\in S} f_{w, p}\right),
\end{equation}
where we let
\begin{equation}\label{de-5}
r(\Lambda_{s} , w)^{-1}\overset{def}{=}\prod_{\alpha\in \Sigma^+,  w(\alpha)<0}\frac{
L(0, \Lambda_{s} \circ\alpha^\vee)}{ 
L(1, \Lambda_{s} \circ\alpha^\vee)\epsilon(1, \Lambda_{s}  \circ\alpha^\vee)},
\end{equation}
and we use a well--known property of normalization
\begin{equation}\label{de-5000000000}
\cal N(\Lambda_{s, p} , \wit{w})f_p =f_{w, p},
\end{equation}
for unramified $f_p$ and $f_{w, p}$.

We end this section with a technical result which will be important
later. It must be well--known but we could not find a convenient reference.

\begin{Lem}
\label{lem:const0} Assume that $f_s\in \I(s)$ is a holomorphic section. 
Let $s_0\in \Bbb  C$ and assume that  $E(s, \cdot )$ has a pole of order $l$ at  $s_0$
(we consider all sections). Put $\varphi=(s-s_0)^l E(s, f_{s})|_{s=s_0}$.
Then, if $\varphi_{const}=0$, then  $\varphi=0$.
\end{Lem}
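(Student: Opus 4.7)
I would argue by contrapositive: assuming $\varphi \neq 0$, I would show $\varphi_{const} \neq 0$. Since $l$ is by definition the maximal pole order of $E(s, \cdot)$ at $s_0$ taken over all sections, the condition ``$\varphi \neq 0$'' is equivalent to ``$E(s, f_s)$ has pole of order exactly $l$ at $s_0$,'' and similarly ``$\varphi_{const} \neq 0$'' is equivalent to ``$E_{const}(s, f_s)$ has pole of order exactly $l$ at $s_0$.'' So the lemma reduces to showing that for every section $f_s$, the pole orders at $s_0$ of the Eisenstein series and of its constant term along the Borel coincide.

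The first step is to use the explicit formula (\ref{de-3}): $E_{const}(s, f_s)(tg)$ is a finite sum indexed by $w \in W$ whose $w$-summand, as a function of $t \in T(\Bbb A)$, transforms under left translation via the character $\delta_B^{1/2}\cdot w(\Lambda_s)$. At generic $s_0$, the characters $w(\Lambda_{s_0})$ for $w \in W$ are pairwise distinct, so by linear independence of characters of $T(\Bbb A)$, the order-$l$ Laurent coefficient of each $w$-summand at $s_0$ is separately determined by $\varphi_{const}$; at exceptional $s_0$ where some characters coincide, I would group summands sharing a character and extract the grouped Laurent coefficient. Combining this with the $(\mathfrak{gl}(m+n), K_\infty) \times \prod_p GL_{m+n}(\Bbb Q_p)$-equivariance of both the constant term map and of (\ref{de-100}), the kernel of $\varphi \mapsto \varphi_{const}$ on the image of the Laurent coefficient map is a sub-representation of $\cal A$ which, via (\ref{de-30000000000000}), is a sub-quotient of the Borel-induced principal series $\Ind_{T(\Bbb A) U(\Bbb A)}^{GL_{m+n}(\Bbb A)}(\Lambda_{s_0})$ realized automorphically.

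The hard part is concluding that this sub-representation must be zero -- equivalently, invoking the classical principle that the Borel constant term is injective on the subspace of $\cal A$ whose cuspidal support is $(T, \Lambda_{s_0})$. One route I would take is the Maass--Selberg relations extended to residues, which express the $L^2$-norm of Arthur's truncation of $\varphi$ in terms of $\varphi_{const}$; vanishing of the latter then forces the truncation (and thus $\varphi$ itself, by moderate growth) to vanish. Alternatively, the spectral decomposition of M\oe glin--Waldspurger \cite{MW} realizes automorphic forms with cuspidal support $(T, \Lambda_{s_0})$ as residues of Eisenstein series from the Borel, whose poles and residues are controlled by the constant-term data, yielding the same conclusion.
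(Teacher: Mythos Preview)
Your approach is correct in principle but takes a heavier route than the paper. The paper's argument is elementary and self-contained: once $\varphi_{const}=0$, one observes that $\varphi$ is cuspidal (since $\varphi$ has cuspidal support on the Borel, vanishing of the Borel constant term forces all constant terms to vanish). One then forms the inner product
\[
I_s = \int_{Z(\Bbb A)GL_{m+n}(\Bbb Q)\backslash GL_{m+n}(\Bbb A)} \overline{\varphi(g)}\,(s-s_0)^l E(s,f_s)(g)\,|\det g|^{\frac{n-m}{2(m+n)}(s+\overline{s_0})}\,dg,
\]
the character twist making the integrand $Z(\Bbb A)$-invariant. For $\mathrm{Re}(s)$ large one unfolds the Eisenstein series; the Iwasawa decomposition reduces $I_s$ to an expression containing the inner integral $\int_{U_{m,n}(\Bbb Q)\backslash U_{m,n}(\Bbb A)} \overline{\varphi(umk)}\,du$, which vanishes by cuspidality of $\varphi$. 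Hence $I_s\equiv 0$ for large $\mathrm{Re}(s)$, and by meromorphic continuation $I_{s_0}=0$. But $I_{s_0}=\int |\varphi(g)|^2 |\det g|^{\frac{n-m}{m+n}\mathrm{Re}(s_0)}\,dg$ has non-negative integrand, so $\varphi=0$.

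Your proposal instead invokes either Maass--Selberg relations for truncated residues or the full Langlands spectral decomposition to deduce injectivity of the Borel constant term on automorphic forms with cuspidal support $(T,\Lambda_{s_0})$. This is valid, but substantially heavier than needed: the paper's unfolding is essentially the classical orthogonality of cusp forms against Eisenstein series, specialized so that $\varphi$ is paired against itself. Note also that your first step---separating the summands of $E_{const}$ via linear independence of the characters $w(\Lambda_{s_0})$---is not actually needed for either route; the injectivity in question concerns the passage from $\varphi$ to $\varphi_{const}$ as a whole, not the internal structure of $\varphi_{const}$.
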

\begin{proof} Let $Z$ be the center of $GL_{m+n}$. Since
$\varphi_{const}=0$ and $\varphi$
 is supported on the Borel subgroup $B_{m+n}$, we must have that $\varphi$ is a
 cuspidal automorphic form (but it might not have a unitary central
 character). In particular, we can form the inner
 product
$$
I_s\overset{def}{=}\int_{Z(\Bbb A)GL_{m+n}(\Bbb Q)\backslash
  GL_{m+n}(\Bbb A)}\overline{
\varphi(g)} (s-s_0)^l E(s, f_{s})(g) |\det g|^{\frac{n-m}{2(m+n)}(s+\overline{s_0})} dg,
$$
for any $s$ for which $(s-s_0)^l E(s, f_{s})$ is holomorphic. We inserted
the character $|\det g|^{\frac{n-m}{2(m+n)}(s+\overline{s_0})}$ in order to
make the product trivial on $Z(\Bbb A)$. Clearly, it is enough to
show that $I_{s_0}=0$. Since $I_s$ is meromorphic in $s$, it is
enough to show that $I_s=0$ for $Re(s)>0$ sufficiently large. Indeed, 
if $Re(s)>0$ is sufficiently large, then $E(s,f_s)$ is given by a series 
$\sum_{\gamma \in P_{m, n}(\Bbb Q)\backslash GL_{m+n}(\Bbb Q)}
f_s(\gamma \cdot g)$ which converges absolutely and uniformly  in $(s,
g)$ on compact sets. Inserting this expression into the defining integral
for $I_s$ we obtain
$$
I_s=(s-s_0)^l \int_{Z(\Bbb A)P_{m+n}(\Bbb Q)\backslash GL_{m+n}(\Bbb A)}
\overline{\varphi(g)} f_{s}(g)|\det
g|^{\frac{n-m}{2(m+n)}(s+\overline{s_0})}  dg.
$$
By the Iwasawa decomposition and the standard integration formulas, 
we obtain 

\begin{align*}
I_s=& (s-s_0)^l \int_{Z(\Bbb A) M_{m+n}(\Bbb Q)\backslash M_{m+n}(\Bbb A)}\int_K 
\delta_{P_{m, n}}^{-1}(m)
|\det(mk)|^{\frac{n-m}{2(m+n)}(s+\overline{s_0})}f_s(mk) \times \\
& \times \left(\int_{U_{m+n}(\Bbb Q)\backslash U_{m+n}(\Bbb A)}
\overline{\varphi(umk)}du\right) dmdk=0.
\end{align*}
\end{proof}

\section{Construction of Certain Congruence Subgroups}\label{cong-subgrps}

This section is a preparation for the next Section \ref{real-e}. The
reader may want to consult \cite{BJ} for the relation between the
open--compact subgroups of finite adeles $GL_{m+n}(\Bbb A_f)$ and the congruence subgroups of 
$GL_{m+n}(\Bbb R)$.

Let $m, n\ge 1$. $P_{m, n}(\Bbb Q_p) \subset GL_{m+n}(\Bbb Q_p)$
introduced in Section \ref{de} which has a Levi subgroup 
$$
M_{m, n}(\Bbb Q_p)=\left\{\left(\begin{matrix}a& 0\\ 0 & d\end{matrix}\right); \ \ a\in GL_n(\Bbb Q_p), 
\ \ d\in GL_m(\Bbb Q_p)
\right\},
$$
and the unipotent radical
$$
U_{m, n}(\Bbb Q_p)=\left\{\left(\begin{matrix}I_m& b\\ 0 & I_n\end{matrix}\right); \ \ b\in M_{m\times n}(\Bbb Q_p)
\right\}.
$$
The opposite parabolic subgroup $P^-_{m, n}$ has the same Levi factor but the following unipotent radical:
$$
U^-_{m, n}(\Bbb Q_p)=\left\{\left(\begin{matrix}I_m& 0\\ c & I_n\end{matrix}\right); \ \ c\in M_{n\times m}(\Bbb Q_p)
\right\}.
$$

\begin{Lem}\label{cong-subgrps-1}
Let $p$ be a prime number. Let $l\ge 1$ be an integer.  The set $L_p(l)$ of all
matrices 
$$
\left(\begin{matrix}a& b\\ c & d\end{matrix}\right)\in GL_{m+n}(\Bbb Q_p)
$$
such that 
$a \in M_{m\times m}(\Bbb Z_p)$, $b\in M_{m\times n}(\Bbb Z_p)$, $c\in
M_{n\times m}(p^l\Bbb Z_p)$,  $d \in M_{n\times n}(\Bbb Z_p)$,   and
having the determinant in  $\Bbb Z_p^\times$ 
defines an  open--compact subgroup of $GL_{m+n}(\Bbb
Q_p)$ which has the  following Iwahori factorization: 
$$
L_p(l)=\left(L_p(l) \cap U_{m, n}(\Bbb Q_p) \right) 
\left(L_p(l) \cap M_{m, n}(\Bbb Q_p) \right)\left(L_p(l) \cap U^-_{m, n}(\Bbb Q_p) \right).
$$
\end{Lem}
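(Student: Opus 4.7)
The plan is to realize $L_p(l)$ as the preimage of a subgroup of a finite group under the reduction-mod-$p^l$ map, and then to produce the Iwahori factorization by explicit block computation, exploiting the fact that the $d$-block of any element is automatically in $GL_n(\mathbb{Z}_p)$.

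First I would show $L_p(l)$ is an open-compact subgroup. Observe that every $g=\left(\begin{smallmatrix} a & b \\ c & d\end{smallmatrix}\right)\in L_p(l)$ has entries in $\mathbb{Z}_p$ and $\det g\in\mathbb{Z}_p^\times$, so $L_p(l)\subset GL_{m+n}(\mathbb{Z}_p)$; since the latter is compact, $L_p(l)$ is automatically relatively compact. The reduction map $\pi_l:GL_{m+n}(\mathbb{Z}_p)\to GL_{m+n}(\mathbb{Z}_p/p^l\mathbb{Z}_p)$ is continuous, and $L_p(l)=\pi_l^{-1}(P_{m,n}(\mathbb{Z}_p/p^l\mathbb{Z}_p))$ because the conditions $a,b,d\in M_{*\times*}(\mathbb{Z}_p)$ are automatic once $g\in GL_{m+n}(\mathbb{Z}_p)$ and $c\in M_{n\times m}(p^l\mathbb{Z}_p)$, so $L_p(l)$ is closed, and since $P_{m,n}(\mathbb{Z}_p/p^l\mathbb{Z}_p)$ is a subgroup of a finite group it is open as well; hence $L_p(l)$ is an open-compact subgroup of $GL_{m+n}(\mathbb{Z}_p)$, and therefore of $GL_{m+n}(\mathbb{Q}_p)$.

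For the Iwahori factorization, the crucial observation is that for $g\in L_p(l)$ one has $d\in GL_n(\mathbb{Z}_p)$. Indeed, reducing modulo $p$ kills $c$, so $\det g\equiv\det(a)\det(d)\pmod p$, and because $\det g\in\mathbb{Z}_p^\times$ both $\det a$ and $\det d$ are units. Given this, set
$$
u^-=\begin{pmatrix} I_m & 0 \\ d^{-1}c & I_n\end{pmatrix},\qquad u^+=\begin{pmatrix} I_m & bd^{-1} \\ 0 & I_n\end{pmatrix},\qquad m=\begin{pmatrix} a-bd^{-1}c & 0 \\ 0 & d\end{pmatrix}.
$$
A direct block multiplication gives $g=u^+ m u^-$. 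It remains to check that each factor lies in $L_p(l)$: the entry $d^{-1}c$ is in $p^l M_{n\times m}(\mathbb{Z}_p)$ because $d^{-1}\in M_{n\times n}(\mathbb{Z}_p)$ and $c\in p^l M_{n\times m}(\mathbb{Z}_p)$, so $u^-\in L_p(l)\cap U^-_{m,n}(\mathbb{Q}_p)$; similarly $bd^{-1}\in M_{m\times n}(\mathbb{Z}_p)$ and the determinant condition is trivial, so $u^+\in L_p(l)\cap U_{m,n}(\mathbb{Q}_p)$; and $a-bd^{-1}c\in M_{m\times m}(\mathbb{Z}_p)$ with $\det(a-bd^{-1}c)=\det(g)/\det(d)\in\mathbb{Z}_p^\times$, hence $m\in L_p(l)\cap M_{m,n}(\mathbb{Q}_p)$.

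I do not foresee a real obstacle here: the argument is essentially the observation that the condition $c\equiv 0\pmod{p^l}$ with $l\ge 1$ is preserved under the standard $LDU$-factorization because the pivot block $d$ stays integral and invertible. The only point requiring a little care is the determinant bookkeeping on the diagonal factor, which reduces to the identity $\det g=\det(a-bd^{-1}c)\det(d)$.
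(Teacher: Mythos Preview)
Your proof is correct and uses essentially the same key idea as the paper: reduce modulo $p$ (the paper reduces modulo $p^l$) to see that $\det a,\det d\in\Bbb Z_p^\times$, then exploit the invertibility of $d$ over $\Bbb Z_p$ to write down the block factorization $g=u^+ m\, u^-$.

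The one organizational difference worth noting is how the group property is obtained. You identify $L_p(l)$ as $\pi_l^{-1}\bigl(P_{m,n}(\Bbb Z_p/p^l\Bbb Z_p)\bigr)$ and get open--compact subgroup in one stroke. The paper instead observes directly that $L_p(l)$ is open, compact, and closed under multiplication, then presents only the two--factor decomposition
\[
\begin{pmatrix} a & b \\ c & d\end{pmatrix}=\begin{pmatrix} a-bd^{-1}c & b \\ 0 & d\end{pmatrix}\begin{pmatrix} I_m & 0 \\ d^{-1}c & I_n\end{pmatrix}
\]
(leaving the further splitting of the $P_{m,n}$--factor implicit), and finally uses this Iwahori factorization to deduce closure under inverses. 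Your preimage argument is a little cleaner and avoids that last step; the paper's route has the mild advantage of not needing to check that $P_{m,n}(\Bbb Z_p/p^l\Bbb Z_p)$ really is a subgroup. Either way the substance is the same.
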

\begin{proof}First, by considering the reduction homomorphism 
$$
M_{(m+n)\times (m+n)}(\Bbb Z_p)\longrightarrow 
M_{(m+n)\times (m+n)}(\Bbb Z_p/ p^l\Bbb Z_p),
$$ 
we find  
$$
\det\left(\begin{matrix}a& b\\ c & d\end{matrix}\right)\equiv \det{a}\cdot \det{d} \ 
(mod \ \ p^l\Bbb Z_p).
$$ 
Since $l\ge 1$, this implies that 
$$
\det{a}\cdot \det{d} \in \Bbb Z_p^\times.
$$
Thus, we obtain
$$
\det{a},  \det{d} \in \Bbb Z_p^\times \implies 
a \in GL_{m}(\Bbb Z_p), \  d \in GL_{n}(\Bbb Z_p).
$$
By definition, $L_p(l)$ is open and compact subset in $GL_n(\Bbb Q_p)$. Also, it follows from the definition that it 
is closed under the multiplication.

Next, we prove the Iwahori factorization. Indeed, since $d \in GL_{n}(\Bbb Z_p)$, the claim follows from
$$
\left(\begin{matrix}a& b\\ c & d\end{matrix}\right)= 
\left(\begin{matrix}a - bd^{-1}c& b \\ 0 &
    d\end{matrix}\right)\left(\begin{matrix}I_m& 0\\ 
d^{-1}c & I_n\end{matrix}\right).
$$

Since $L_p(l)$ is closed under multiplication,  proved Iwahori
factorization shows that  $L_p(l)$  is closed under taking the
inverses. Hence, $L_p(l)$ is a group. \end{proof}

Now, we consider the global situation. Let $N\ge 2$ be a natural number. We decompose 
into prime numbers
$$
N=p_1^{l_1}\cdots  p_u^{l_u}.
$$
We define an  compact subgroup $L(N)$ of $GL_{m+n}(\Bbb A_f)$ in the following way:
$$
L(N)=\prod_{i=1}^uL_{p_i}(l_i)\times \prod_{p\not \in \{p_1, \ldots, p_u\}} GL_{m+n}(\Bbb Z_p).
$$
We extend this definition by letting $L(1)=\prod_p GL_{m+n}(\Bbb Z_p)$.

We consider $GL_{m+n}(\Bbb Q)$ diagonally embedded in $GL_{m+n}(\Bbb A_f)$. Hence, we may consider the 
intersection
$$
GL_{m+n}(\Bbb Q)\cap L(N) 
$$
in $GL_{m+n}(\Bbb A_f)$. We denote this group by $\Gamma_0(N)$. Explicitly, 
$\Gamma_0(N)$  consists of all matrices
$$
\left(\begin{matrix}a& b\\ c & d\end{matrix}\right)
$$
such that $a \in M_{m\times m}(\Bbb Z)$,  $b\in M_{m\times n}(\Bbb
Z)$, $d \in M_{n\times n}(\Bbb Z)$, and $c\in N\cdot M_{n\times m}(\Bbb Z)$,
and having determinant equal to $\pm 1$.

\section{Restriction of Degenerate Eisenstein Series to $GL_{m+n}(\Bbb R)$}\label{real-e}

We consider the induced representation 
$\Ind_{P_{m, n}(\Bbb A)}^{GL_{m+n}(\Bbb A)}\left(|\det |^{s/2}\otimes |\det |^{-s/2}\right)$
introduced in Section \ref{de} but for trivial $\chi$ and $\mu$. We assume that $m\le n$.

In this section we will consider only very special functions
$f=f_\infty \otimes_{p} f_p$ from that induced representation. 
We require the following: 

\begin{itemize}
\item[(i)] $f_\infty \in \Ind_{P_{m, n}(\Bbb R)}^{GL_{m+n}(\Bbb R)}\left(
|\det |^{s/2}_\infty \otimes |\det |^{-s/2}_\infty\right)$
is any $C^\infty$ and $K_\infty$--finite function;

\item[(ii)] Let $S$ be any finite (perhaps empty) set of primes. 
Then, for $p\not\in S$, we let   $f_p 
\in \Ind_{P_{m, n}(\Bbb Q_p)}^{GL_{m+n}(\Bbb Q_p)}\left(|\det |^{s/2}_p\otimes 
|\det |^{-s/2}_p\right)$ is the unique function which is 
right--invariant under $GL_{m+n}(\Bbb Z_p)$ and satisfies $f_p(I_{m+n})=1$.

\item[(iii)] Let us write $S=\{p_1, p_2, \ldots, p_u\}$. Select any integers 
$l_1, l_2, \ldots, l_u\ge 1$. For $p=p_i$ (and $l=l_i$), we define the function
 $f_p 
\in \Ind_{P_{m, n}(\Bbb Q_p)}^{GL_{m+n}(\Bbb Q_p)}\left(|\det |^{s/2}_p \otimes 
|\det |^{-s/2}_p\right)$ is unique function defined by the following requirements:

\begin{align*}
& \supp{(f_p)}\subset P_{m, n}(\Bbb Q_p) L_p(l),\\
& \text{$f_p$ is right $L_p(l)$--invariant, and}\\
&f_p(I_{m+n})=1.
\end{align*}
\end{itemize}
In the computations below, we allow $u=0$ (equivalently, $S=\emptyset$ in (ii)) i.e., we omit (iii).

We write
$$
GL_{m+n}(\Bbb A)=GL_{m+n}(\Bbb R)\times GL_{m+n}(\Bbb A_f), \ \
g=(g_\infty, g_f).
$$
We recall that $GL_n(\Bbb Q)$ is diagonally embedded. We identify $\gamma=
(\gamma, \gamma)$. Using the notation of Section \ref{cong-subgrps},  the
determinant map  $\det : L(N)\rightarrow \prod_{p}\Bbb Z_p^\times$ is
an  epimorphism (use just upper triangular for each $p$ to see this). Hence, the strong approximation
implies that 
$$
GL_{m+n}(\Bbb A)=GL_{m+n}(\Bbb Q) \cdot \left(GL_{m+n}(\Bbb R)\times L(N)\right),
$$
using above identifications. 

\begin{Lem}\label{real-e-1}
Under above assumptions, $E(s,f)(g_\infty, 1)=
\sum_{\gamma \in \Gamma_0(N)\cap P_{m, n}(\Bbb Q)\backslash 
\Gamma_0(N)}   f_\infty(\gamma \cdot g_\infty)$,
$g_\infty \in GL_{m+n}(\Bbb R)$, where the degenerate Eisenstein
 series $E(s,f)$ is introduced in Section  \ref{de}.
\end{Lem}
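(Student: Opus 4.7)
The plan is to unfold the defining sum and show that only the cosets meeting $\Gamma_0(N)$ contribute, with each contributing precisely $f_\infty(\eta g_\infty)$. With $g = (g_\infty, 1)$ the definition of the adelic Eisenstein series gives
$$
E(s,f)(g_\infty, 1) = \sum_{\gamma \in P_{m,n}(\mathbb Q)\backslash GL_{m+n}(\mathbb Q)} f_\infty(\gamma g_\infty) \prod_{p} f_p(\gamma),
$$
so the two things to analyse are (a) the support condition $\prod_p f_p(\gamma) \neq 0$, and (b) the value of that product. For (a), at $p \notin S$ the Iwasawa decomposition $GL_{m+n}(\mathbb Q_p) = P_{m,n}(\mathbb Q_p) GL_{m+n}(\mathbb Z_p)$ makes $f_p$ nowhere zero; at $p \in S$ the support condition (iii) forces $\gamma \in P_{m,n}(\mathbb Q_p) L_p(l_p)$. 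Collecting all finite places, $\gamma$ must lie in $P_{m,n}(\mathbb A_f)\cdot L(N)$.

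The critical step, and the one I expect to be the main obstacle, is the strengthening of this to $\gamma \in P_{m,n}(\mathbb Q)\cdot \Gamma_0(N)$. I would prove the class-number-one identity $P_{m,n}(\mathbb A_f) = P_{m,n}(\mathbb Q) \cdot P_{m,n}(\widehat{\mathbb Z})$ by Levi decomposition: class-number-one for $GL_m$ and $GL_n$ over $\mathbb Q$ (elementary divisors) handles the Levi, strong approximation for the abelian unipotent radical $U_{m,n}$ handles the rest, and a short conjugation argument assembles them into an honest $P_{m,n}(\mathbb Q)\cdot P_{m,n}(\widehat{\mathbb Z})$ factorization. Then, writing $\gamma = q\ell$ with $q \in P_{m,n}(\mathbb A_f)$, $\ell \in L(N)$, and $q = q_0 \pi$ with $q_0 \in P_{m,n}(\mathbb Q)$, $\pi \in P_{m,n}(\widehat{\mathbb Z}) \subset L(N)$, one obtains $\gamma = q_0 (\pi\ell)$ with $\pi \ell \in L(N) \cap GL_{m+n}(\mathbb Q) = \Gamma_0(N)$.

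With the support pinned down, fix a factorization $\gamma = p_0 \eta$, $p_0 \in P_{m,n}(\mathbb Q)$, $\eta \in \Gamma_0(N)$. At each finite $p$, $\eta$ lies in the compact stabilizer of $f_p$, so $f_p(\gamma) = f_p(p_0) = |\det a_0|_p^{s/2}|\det d_0|_p^{-s/2}\delta_{P_{m,n}}^{1/2}(p_0)$, where $p_0$ has Levi part $\mathrm{diag}(a_0, d_0)$; the archimedean factor is $f_\infty(\gamma g_\infty) = |\det a_0|_\infty^{s/2}|\det d_0|_\infty^{-s/2}\delta_{P_{m,n}}^{1/2}(p_0)\, f_\infty(\eta g_\infty)$. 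Multiplying over all places and invoking the product formula $\prod_v |x|_v = 1$ for $x \in \mathbb Q^\times$, every $p_0$-factor collapses and only $f_\infty(\eta g_\infty)$ survives. The standard bijection $P_{m,n}(\mathbb Q) \backslash P_{m,n}(\mathbb Q)\Gamma_0(N) \cong (\Gamma_0(N) \cap P_{m,n}(\mathbb Q)) \backslash \Gamma_0(N)$ then rewrites the sum in the claimed form; the summand is well-defined on cosets because the diagonal blocks of any element of $\Gamma_0(N) \cap P_{m,n}(\mathbb Q)$ have determinant $\pm 1$, making both $|\det a_0|_\infty^{s/2}|\det d_0|_\infty^{-s/2}$ and $\delta_{P_{m,n}}^{1/2}$ evaluate to $1$.
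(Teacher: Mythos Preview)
Your argument is correct and takes a genuinely different route from the paper's. The paper splits into cases $N\ge 2$ and $N=1$. For $N\ge 2$ it works directly with matrix entries: the support condition at the ramified primes forces the coset of $\gamma$ to meet the big cell $P_{m,n}(\Bbb Q)U^-_{m,n}(\Bbb Q)$, so one may take a representative $\left(\begin{smallmatrix} I_m & 0\\ c & I_n\end{smallmatrix}\right)$ with $c$ satisfying the required congruences at primes dividing $N$; the paper then carries out an explicit elementary--divisor construction (using $m\le n$) to produce a left $P_{m,n}(\Bbb Q)$--translate lying in $\Gamma_0(N)$, and afterwards computes $f_{p_i}(\gamma)=1$ for $\gamma\in\Gamma_0(N)$ directly via the Iwahori factorization of $L_{p_i}(l_i)$ from Lemma~\ref{cong-subgrps-1}. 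The case $N=1$ is handled separately by quoting the classical identity for $B\backslash GL$ over $\Bbb Q$ versus $\Bbb Z$.

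You replace the hands--on matrix work by the structural decomposition $P_{m,n}(\Bbb A_f)=P_{m,n}(\Bbb Q)\cdot P_{m,n}(\widehat{\Bbb Z})$ (class number one on the Levi, strong approximation on the unipotent radical, plus the conjugation step to splice them), and you replace the direct verification $f_p(\gamma)=1$ by the product formula. This is cleaner, treats all $N\ge 1$ uniformly, and nowhere uses the hypothesis $m\le n$. The paper's approach, in exchange, is entirely elementary and constructive: it exhibits the $\Gamma_0(N)$--representative by an explicit recipe rather than appealing to adelic class--number facts.
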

\begin{proof} First, we consider the case $N\ge 2$. We have the following:

\begin{align*}
E(s,f)(g_\infty, 1)& =\sum_{\gamma \in P_{m, n}(\Bbb Q)\backslash GL_{m+n}(\Bbb Q)}
f(\gamma \cdot (g_\infty, 1))\\
& =\sum_{\gamma \in P_{m, n}(\Bbb Q)\backslash GL_{m+n}(\Bbb Q)}
f(\gamma \cdot g_\infty, \gamma)\\
& =\sum_{\gamma \in P_{m, n}(\Bbb Q)\backslash GL_{m+n}(\Bbb Q)}
f_\infty(\gamma \cdot g_\infty) \prod_{p} f_p(\gamma).
\end{align*}
Next, we compute $f_p(\gamma)$ for $\gamma\in P_{m, n}(\Bbb Q)\backslash GL_{m+n}(\Bbb Q)$.
Let us write
$$
\gamma=\left(\begin{matrix}a& b\\ c & d\end{matrix}\right)
$$
such that $a \in M_{m\times m}(\Bbb Q)$, 
$b\in M_{m\times n}(\Bbb Q)$, $c\in M_{n\times m}(\Bbb Q)$  $d \in M_{n\times n}(\Bbb Q)$.
Then, because of (iii), we see that corresponding term in Eisenstein series is zero 
unless
$$
\gamma\in P_{m, n}(\Bbb Q) U^-_{m, n}(\Bbb Q).
$$
One easily check that this is equivalent with the fact that $d$ is
invertible, and in the class $ P_{m, n}(\Bbb Q)\gamma$ we have a
representative $\left(\begin{matrix}I_m& 0\\ c & I_n\end{matrix}\right)$
which satisfies 
$$
c_{i, j}\equiv 0 \ \left(mod \ \  p^{l_\alpha}_{\alpha}\Bbb Z_{p_\alpha}\right),\ \ \alpha=1, \ldots, u.
$$ 
This implies that we can write
$$
c=N \beta^{-1} \widetilde{c},
$$
where $\beta$ is an integer prime to $N$, and $\widetilde{c}\in
M_{n\times m}(\Bbb Z)$. At this point we use elementary divisor theory
and write $\widetilde{c}$ in the form
$$
\widetilde{c}=\widetilde{c}_1 \widetilde{d}\widetilde{c}_2,
$$
where $\widetilde{c}_1\in GL_n(\Bbb Z)$, $\widetilde{c}_2\in GL_m(\Bbb
Z)$, and $\widetilde{d}\ \in M_{n\times m}(\Bbb Z)$ has diagonal entries of course up to $\min{(m,
n)}$) and all others terms equal to zero. We define a diagonal matrix 
$\widetilde{A}\in GL_m(\Bbb Q)$ as follows. In $\widetilde{A}$ the entry is different
than $1$ if in $\widetilde{d}$ the entry at the
same position is not zero and it has a greatest common factor with $\alpha$ equal to $x$;
then we let $x/\alpha$ in that position in $\widetilde{A}$. 
Next, we define a diagonal matrix  $\widetilde{C}\in GL_n(\Bbb
Q)$. Here we use $m\le n$. At starts as inverses of the corresponding
diagonal entries of $\widetilde{A}$ and ends with $1$'s. We let
$A=\widetilde{A}\widetilde{c}_2$, and $C=\widetilde{c}_1\widetilde{C}$.
Next, we define $\widetilde{B}\in M_{m\times n}(\Bbb Q)$. It has all
entries equal to zero except if at the position $(i,i)$ the matrix
$\widetilde{d}$ has a non--zero entry. We select
$\widetilde{B}_{ii}\in\Bbb Z$ such that $x\alpha^{-1} +N\alpha^{-1}
\widetilde{B}_{ii}\widetilde{d}_{ii}$ is an integer.
Finally, we let $B=\widetilde{A}\widetilde{c}^{-1}_1$. Then, 
$\left(\begin{matrix}A& B\\ 0 & C\end{matrix} \right)\in P_{m, n}(\Bbb Q), 
$
and it has a determinant equal to one. Finally, 
$$
\left(\begin{matrix}A& B\\ 0 & C\end{matrix} \right)
\left(\begin{matrix}I_m& 0\\N \beta^{-1} a & I_n\end{matrix}\right) \in \Gamma_0(N).
$$
Thus, instead of $\gamma$ we can use this expression in order to
compute what we want. It also shows that we can use the summation over 
$$
\Gamma_0(N)\cap P_{m, n}(\Bbb Q)\backslash \Gamma_0(N).
$$
Thus,  we can write 
$$
E(s,f)(g_\infty, 1)
=\sum_{\gamma \in \Gamma_0(N)\cap P_{m, n}(\Bbb Q)\backslash 
\Gamma_0(N)} f_\infty(\gamma \cdot g_\infty) 
\prod_{p} f_p(\gamma).
$$
Applying (ii), this is equal to
$$
\sum_{\gamma \in \Gamma_0(N)\cap P_{m, n}(\Bbb Q)\backslash 
\Gamma_0(N)} f_\infty(\gamma \cdot g_\infty) 
\prod_{i=1}^u f_{p_i}(\gamma).
$$

We compute $f_{p_i}(\gamma)$ using the Iwahori decomposition given by
Lemma \ref{cong-subgrps-1}. Let us write
$$
\gamma=
\left(\begin{matrix}a& b\\ c & d\end{matrix}\right)= 
\left(\begin{matrix}a - bd^{-1}c& b \\ 0 &
    d\end{matrix}\right)\left(\begin{matrix}I_m& 0\\ 
d^{-1}c & I_n\end{matrix}\right)=
\left(\begin{matrix}a(1 - a^{-1}bd^{-1}c)& b \\ 0 & d\end{matrix}\right)\left(\begin{matrix}
I_m& 0\\ d^{-1}c & I_n\end{matrix}\right).$$
Then, by (iii), we find 
$f_{p_i}(\gamma)=1$.

This completes the proof in the case $N\ge 2$. The case $N=1$ is
similar, and it follows from a well--known identity $B_n(\Bbb Q)\setminus
  GL_n(\mathbb Q)=GL_n(\mathbb Z)\cap B_n(\Bbb Q)\setminus  GL_n(\mathbb Z)$,
  where $B_n$ is a Borel subgroup of upper--triangular matrices. (See
  \cite{Borel1966-1}, 1.11 Examples.) Indeed, since  $B_{m+n}\subset  P_{m, n}$, we obtain that 
$P_{m, n}(\Bbb Q)\setminus  GL_{m+n}(\mathbb Q)=GL_{m+n}(\mathbb
  Z)\cap P_{m, n}(\Bbb Q)\setminus  GL_{m+n}(\mathbb Z)$. 
\end{proof}

One can compute the constant term of Eisenstein series given by Lemma
\ref{real-e-1} using (\cite{Muic3}, Lemma 3.3). This can give a nice
explicit formula for the constant term along the Borel subgroup.

According to Lemma \ref{real-e-1}, for a smooth section $f_\infty \in 
\Ind_{P_{m, n}(\Bbb R)}^{GL_{m+n}(\Bbb R)}\left(|\det |^{s/2} \otimes
  |\det |^{-s/2}\right)$, we define the Eisenstein series 
$$
E_\infty(s, f_\infty)(g_\infty)=\sum_{\gamma \in \Gamma_0(N)\cap P_{m, n}(\Bbb Q)\backslash 
\Gamma_0(N)}   f_\infty(\gamma \cdot g_\infty).
$$
Lemma \ref{real-e-1} tells us that outside of poles
\begin{equation}\label{real-e-4}
E_\infty(s, f_\infty)(g_\infty)=E(s,f_\infty \otimes_{p<
  \infty}f_p)(g_\infty, 1),
\end{equation}
where $f_p$ are given by (ii) and (iii) from the beginning of this
section. We say that $E_\infty(s, \cdot)$ has a pole at $s_0$ of order
$l\ge 1$ if $(s-s_0)^lE_\infty(s, f_\infty)|_{s=s_0}$ is holomorphic for all
$f_\infty$, and at least for one of them is not zero. 

\begin{Lem}\label{real-e-3}
The order of pole on $E(s, \cdot \otimes_{p<
  \infty}f_p)$ at $s=s_0$ is $l\ge 1$ if and only if the same is true for 
$E_\infty(s, \cdot)$. Further, if  both holomorphic, $E_\infty(s, \cdot)$  is not 
identically zero  at $s=s_0$ if and only if the same is true for $E(s, \cdot \otimes_{p<
  \infty}f_p)$.
\end{Lem}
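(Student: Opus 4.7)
My plan is to deduce both assertions from Lemma \ref{real-e-1}, which gives the identity
$$
E_\infty(s, f_\infty)(g_\infty) = E(s, f_\infty\otimes_{p<\infty} f_p)(g_\infty, 1),
$$
combined with strong approximation and the right $L(N)$-invariance built into the construction of the sections $f_p$. Restricting the variable $g\in GL_{m+n}(\Bbb A)$ to the slice $g_f=1$ shows immediately that, for every $f_\infty$, the pole order at $s_0$ of $s\mapsto E_\infty(s, f_\infty)(g_\infty)$ is at most that of $s\mapsto E(s, f_\infty\otimes_{p<\infty} f_p)$ at any adelic point, and similarly for non-vanishing. The content of the lemma lies in the opposite inequality, namely that any pole, or any nonzero value, of $E(s, f_\infty\otimes_{p<\infty} f_p)$ on $GL_{m+n}(\Bbb A)$ is already visible on the real slice $g_f=1$.

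To establish this, I will first observe that the section $f_\infty\otimes_{p<\infty}f_p$ is invariant under right translation by $\{1\}\times L(N)\subset GL_{m+n}(\Bbb A)$: this is the right $GL_{m+n}(\Bbb Z_p)$-invariance of the unramified $f_p$ from (ii) together with the right $L_p(l_p)$-invariance of the ramified $f_p$ from (iii). The same right $L(N)$-invariance then holds for $g\mapsto E(s, f_\infty\otimes_{p<\infty}f_p)(g)$ as a function of $g$, term-by-term in the defining series on the domain of absolute convergence and hence on the meromorphic continuation. Likewise, left $GL_{m+n}(\Bbb Q)$-invariance is preserved through meromorphic continuation in $s$.

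Now I apply the strong approximation decomposition
$$
GL_{m+n}(\Bbb A)=GL_{m+n}(\Bbb Q)\cdot\bigl(GL_{m+n}(\Bbb R)\times L(N)\bigr)
$$
recalled at the start of Section \ref{real-e}: any $g=(g_\infty, g_f)$ can be written as $\gamma\cdot(g_\infty', k)$ with $\gamma\in GL_{m+n}(\Bbb Q)$ and $k\in L(N)$. Combining the two invariances yields the pointwise identity
$$
E(s, f_\infty\otimes_{p<\infty}f_p)(g) = E(s, f_\infty\otimes_{p<\infty}f_p)(g_\infty',1) = E_\infty(s, f_\infty)(g_\infty'),
$$
valid as meromorphic functions of $s$. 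Consequently the two families of meromorphic functions $\{E(s,f_\infty\otimes_{p<\infty} f_p)(g)\}_{f_\infty, g}$ and $\{E_\infty(s, f_\infty)(g_\infty)\}_{f_\infty, g_\infty}$ literally coincide as families, from which the equality of pole orders at $s_0$ and the equivalence of the non-vanishing statements (in the holomorphic case) follow at once. No serious obstacle arises; the only point that deserves attention is that right $L(N)$-invariance of $E(s, f)$ persists through meromorphic continuation, which is routine.
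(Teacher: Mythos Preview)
Your proof is correct and follows essentially the same approach as the paper: Lemma \ref{real-e-1} for the restriction identity, right $L(N)$-invariance of the section (hence of the Eisenstein series), left $GL_{m+n}(\Bbb Q)$-invariance, and strong approximation to pass from the real slice to all of $GL_{m+n}(\Bbb A)$. The only cosmetic difference is that the paper obtains the identity $E_\infty(s,f_\infty)(\gamma_0 g_\infty)=E(s,f_\infty\otimes_{p<\infty}f_p)(g_\infty,\gamma_0^{-1})$ by an explicit change of summation variable $\gamma\mapsto\gamma\gamma_0^{-1}$ in the defining series, whereas you invoke left $GL_{m+n}(\Bbb Q)$-invariance of $E$ abstractly; these are the same fact.
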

\begin{proof} We prove the first claim. Let $f_\infty \in 
\Ind_{P_{m, n}(\Bbb R)}^{GL_{m+n}(\Bbb R)}\left(|\det |^{s/2} \otimes
  |\det |^{-s/2}\right)$. For $Re(s)$ large, we use defining series
to compute (see the proof of previous lemma)
$$
E_\infty(s, f_\infty)(\gamma_0 g_\infty)=E(s, f_\infty \otimes_{p<
  \infty}f_p)(\gamma_0 g_\infty, 1)
= \sum_{\gamma \in P_{m, n}(\Bbb Q)\backslash GL_{m+n}(\Bbb Q)}
f_\infty(\gamma \cdot \gamma_ 0 g_\infty) \prod_{p} f_p(\gamma),
$$
where $\gamma_0\in GL_{m+n}(\Bbb Q)$. By changing $\gamma$ to $\gamma
\gamma_0^{-1}$, we obtain
$$
E_\infty(s, f_\infty)(\gamma_0 g_\infty)
= \sum_{\gamma \in P_{m, n}(\Bbb Q)\backslash GL_{m+n}(\Bbb Q)}
f_\infty(\gamma \cdot g_\infty) \prod_{p} f_p(\gamma\gamma_0^{-1}).
$$
The last expression gives us 
$$
E_\infty(s, f_\infty)(\gamma_0 g_\infty)=
E(s, f_\infty \otimes_{p<
  \infty} f_p)(g_\infty, \gamma_0^{-1}).
$$
So, after analytic continuation, we obtain
$$(s-s_0)^lE_\infty(s, f_\infty)|_{s=s_0}(\gamma_0 g_\infty)=
(s-s_0)^l E(s, f_\infty \otimes_{p<
  \infty} f_p) |_{s=s_0}(g_\infty, \gamma_0^{-1}).
$$
Now, if $E_\infty(s, \cdot)$ has order less than $l$ which is the
order of $E(s, f_\infty \otimes_{p<
  \infty} f_p)$, then the
left--hand side in above expression is zero for all $f_\infty$. But,
then the right--hand side is also zero at all points from 
$GL_{m+n}(\Bbb R)\times GL_{m+n}(\Bbb Q)$, where $GL_{m+n}(\Bbb Q)$
is diagonally embedded in $GL_{m+n}(\Bbb A_f)$. Also, by (ii) and (iii),
$\otimes_{p<  \infty} f_p$ is right--invariant under the open--compact
subgroup $L(N)$ defined in the previous section. Using the strong
approximation, we obtain that 
$$
(s-s_0)^l E(f_\infty \otimes_{p<
  \infty} f_p) |_{s=s_0}=0.$$
This contradiction proves one direction of the first claim of the
lemma; the converse has similar proof. The second
claim has also similar proof.  
\end{proof}

\section{The Main Results About Degenerate Eisenstein series}\label{main-res}

In this section we first state the main results we prove for
adelic Eisenstein series, and then we apply them to study
Eisenstein series from the previous section. We start from the simplest result.

\begin{Thm}\label{main-res-1} Let $s>0$ be a real number. Assume that $m\le n$. Then, the degenerate Eisenstein
  series (\ref{de-1}) is  holomorphic and non--zero for all $s\not \in
  \{\frac{m+n}{2}-\alpha; \ \  \alpha\in \Bbb Z,  \ \ 0\le \alpha<
  \frac{m+n}{2} \}$. Moreover, the map (\ref{de-100}) is an embedding.
\end{Thm}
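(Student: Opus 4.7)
The plan is to apply the constant term formula~(\ref{de-3}) together with Lemma~\ref{lem:const0} to reduce the holomorphy and non-vanishing of $E(s,f)$ to the corresponding properties of $E_{\mathrm{const}}(s,f)$, and then to read off the embedding from the non-vanishing of the $w=1$ summand. For $\mathrm{Re}(s)>(m+n)/2$ the defining series converges absolutely and the conclusion is immediate, so fix $s_0\in(0,(m+n)/2]$ not in the exceptional set $\{\frac{m+n}{2}-\alpha:\alpha\in\mathbb{Z},\ 0\le\alpha<\frac{m+n}{2}\}$.

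For holomorphy, each summand of~(\ref{de-3}) is a product of the global factor $r(\Lambda_s,w)^{-1}$, local normalized operators $\mathcal{N}(\Lambda_{s,p},\widetilde{w})f_p$ at ramified places, and unramified images $f_{w,p}$ elsewhere. The local normalized operators are holomorphic at $s_0$ because the local degenerate principal series $I_p(s_0)$ is irreducible away from the exceptional exponents---Zelevinsky~\cite{Z} in the $p$-adic case and Howe--Lee~\cite{HL} at the archimedean place---and the standard reducibility/pole correspondence then prevents $\mathcal{N}$ from acquiring a pole. The normalizing factor $r(\Lambda_s,w)^{-1}$ is a finite product of ratios $L(0,\Lambda_s\circ\alpha^\vee)/[L(1,\Lambda_s\circ\alpha^\vee)\,\epsilon(1,\Lambda_s\circ\alpha^\vee)]$; each coroot evaluation $\Lambda_s\circ\alpha^\vee$ equals either $|\cdot|^{k}$ or $\chi\mu^{-1}|\cdot|^{s+k}$ for an integer $k$ depending on the block structure of $\alpha$, and the exclusion of the exceptional lattice is precisely the statement that neither $L(1,\cdot)$ vanishes nor $L(0,\cdot)$ is singular at $s_0$ for any coroot that actually appears in the sum.

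For non-vanishing and the embedding claim, isolate the $w=1$ summand of $E_{\mathrm{const}}(s_0,f)$: its normalizing factor is the empty product $1$, its normalized operator is the identity, and its contribution is simply $f$ itself viewed via Iwasawa inside $\mathrm{Ind}_{T(\mathbb{A})U(\mathbb{A})}^{GL_{m+n}(\mathbb{A})}(\Lambda_{s_0})$. For $s_0$ outside the exceptional set the torus exponents $\{w(\Lambda_{s_0})\}$ arising from the other $w$ appearing in the sum are pairwise distinct from $\Lambda_{s_0}$, so the $w=1$ contribution cannot be cancelled; hence $f\neq 0$ forces $E_{\mathrm{const}}(s_0,f)\neq 0$, and Lemma~\ref{lem:const0} gives $E(s_0,f)\neq 0$. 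This simultaneously establishes non-vanishing and the injectivity of~(\ref{de-100}).

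The main obstacle is the detailed tracking of the $L$-factor ratios in $r(\Lambda_s,w)^{-1}$: in the self-dual case $\chi=\mu$ the global $\zeta$-factors carry poles (and possible zeros) that would have to be catalogued, and verifying that no factor in the product goes singular at $s_0$ requires a case analysis according to the block structure of each contributing root $\alpha$ and to whether $\chi\mu^{-1}$ is trivial, non-trivial non-sign, or a sign character---exactly the combinatorial bookkeeping the introduction flags as the source of the difficulty in Sections~\ref{ct3000}--\ref{conclusion}.
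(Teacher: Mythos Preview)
Your approach is essentially the same as the paper's: reduce to the constant term via Lemma~\ref{lem:const0}, show each summand in (\ref{de-3}) is holomorphic at $s_0$, and observe that the $w=1$ term cannot be cancelled (this is Lemma~\ref{ct-2} in the paper). Two remarks on the details.

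First, for the holomorphy of the local normalized operators the paper does \emph{not} invoke irreducibility of the full degenerate principal series $I_p(s_0)$. Instead (Lemma~\ref{proof-main-res-1}) it factors $\cal N(\Lambda_{s,p},\wit{w})$ into rank--one $GL_2$ operators of the form~(\ref{proof-main-res-2}); each such $GL_2$ principal series is visibly irreducible once $s-(m+n)/2\notin\Bbb Z$, and the $N_2N_1=\mathrm{id}$ argument then gives holomorphy on the \emph{full} principal series, hence on $I_p(s)$. Your route through Zelevinsky/Howe--Lee irreducibility of $I_p(s_0)$ also works, but note that the ``standard reducibility/pole correspondence'' you invoke is cleanest when applied to an operator between two irreducible modules, so strictly speaking you need irreducibility of the target $\nu^{-s_0/2}\mu_p\,1_{GL(n)}\times\nu^{s_0/2}\chi_p\,1_{GL(m)}$ as well (which holds for the same reasons) and then factor through $w_0$.

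Second, your final paragraph overstates the obstacle. The $L$--factor bookkeeping required for \emph{this} theorem is the short computation recorded in (\ref{ct-4}) and Lemma~\ref{ct-6}: once the product is telescoped, $r(\Lambda_s,w)^{-1}$ is manifestly holomorphic whenever $\chi\neq\mu$, and when $\chi=\mu$ a pole forces $s-(m+n)/2\in\Bbb Z$. The heavy combinatorics you refer to in Sections~\ref{ct3000}--\ref{conclusion} (orbit decompositions, cancellation of higher--order poles) are needed only for Theorem~\ref{main-res-2}, where $s$ \emph{does} lie in the exceptional set; they play no role here.
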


The next result is more difficult. 

\begin{Thm}\label{main-res-2} Assume that $m\le n$. Then, the degenerate Eisenstein
  series (\ref{de-1}) with $\mu=\chi$ ($=$ to the trivial character
  without loss of the generality), for $s \in
  \{\frac{m+n}{2}-\alpha; \ \  \alpha\in \Bbb Z,  \ \ 0\le \alpha<
  \frac{m+n}{2} \}$,  has at most
  simple pole, and after removing the possible pole the
  image  of the
  map (\ref{de-100}) is the unique spherical irreducible component in 
$\Ind_{P_{m, n}(\Bbb A)}^{GL_{m+n}(\Bbb A)}\left(|\det |^{s/2}
  \otimes |\det |^{-s/2}\right)$. Moreover, the pole occurs precisely
  for $1\le \alpha \le m-1$.
\end{Thm}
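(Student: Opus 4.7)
The plan is to analyze the constant term formula (\ref{de-3}) of $E(s,f)$ at $s_0 = \frac{m+n}{2}-\alpha$. By Lemma \ref{lem:const0}, the residue $(s-s_0)^l E(s, f_s)|_{s=s_0}$ vanishes iff its constant term along $B_{m+n}$ does, so the pole-order, non-vanishing, and image statements all reduce to studying
\begin{equation*}
\sum_{w \in W,\; w(\Delta \setminus \{\beta\}) > 0} r(\Lambda_s, w)^{-1} \Bigl( \otimes_{p \in S} \mathcal{N}(\Lambda_{s,p}, \wit{w}) f_p \Bigr) \otimes \Bigl( \otimes_{p \notin S} f_{w,p} \Bigr)
\end{equation*}
near $s_0$, with $\chi = \mu = \mathbf{1}$.

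First I would bound the order of the pole contributed by each $w$. Unravelling (\ref{de-4}) and (\ref{de-5}), the factor $r(\Lambda_s, w)^{-1}$ is an explicit product of ratios $\zeta(s + (n-m)/2 + i - j)/\bigl(\zeta(1 + s + (n-m)/2 + i - j)\,\epsilon(1+s+(n-m)/2+i-j)\bigr)$ indexed by those positive roots with $w(\alpha) < 0$ which couple the two Levi blocks (the roots internal to a single $GL_m$- or $GL_n$-block give $s$-independent factors). A Riemann zeta pole of the numerator occurs only when $s_0 + (n-m)/2 + i - j = 1$, giving an explicit count of ``bad'' roots for each $w$ and hence an a priori pole bound. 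On the local side the degenerate principal series theory of Howe--Lee (at $\infty$) and Zelevinsky (at $p < \infty$) pins down the poles and kernels of $\mathcal{N}(\Lambda_{s,p}, \wit{w})$ on our induced representation, so no additional poles or cancellations come from the local operators.

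The main technical obstacle is the combinatorial cancellation: individual summands can have poles of order $\geq 2$, yet the total sum has at most a simple pole. My strategy is to group Weyl elements according to their action on the roots crossing between the two Levi blocks and, using that $\mathcal{N}(\Lambda_{s,p}, \wit{w})$ intertwines appropriately, pair each higher-order contribution with a cancelling counterpart. This is exactly the analysis deferred to Sections \ref{ct3000} and \ref{conclusion}, and I expect it to absorb the bulk of the proof.

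For the identification of the image and the precise location of the pole, I would apply Lemma \ref{lem:const0} to a spherical section. By Zelevinsky and Howe--Lee, the local induced representation $\Ind_{P_{m,n}}^{GL_{m+n}}(|\det|^{s_0/2}\otimes|\det|^{-s_0/2})$ has a unique spherical irreducible constituent, and the longest Weyl element's contribution to (\ref{de-3}) is, via the Gindikin--Karpelevi\v{c} formula (\ref{de-5000000000}) at almost all places, explicitly non-zero spherical. Since (\ref{de-100}) is a $\left(\mathfrak{gl}(m+n), K_\infty\right) \times \prod_p GL_{m+n}(\mathbb{Q}_p)$-intertwiner, its image must then coincide with this spherical component. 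To pinpoint $1 \leq \alpha \leq m-1$, I would combine the root count above with the local composition data: for $\alpha \geq m$ the denominator factors $L(1,\cdot)\epsilon(1,\cdot)$ in $r(\Lambda_s, w)^{-1}$ cancel all the surviving zeta-poles of the longest element's term, so $E(s_0, \cdot)$ is holomorphic and realizes the spherical quotient directly; whereas for $1 \leq \alpha \leq m-1$ this cancellation fails for precisely one Weyl element, forcing a genuine simple pole whose residue still maps onto the spherical component as an otherwise inaccessible subquotient.
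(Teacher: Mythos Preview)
Your overall architecture is correct and matches the paper: reduce to the constant term via Lemma \ref{lem:const0}, separate the global normalization factors $r(\Lambda_s,w)^{-1}$ from the local normalized operators, use Howe--Lee/Zelevinsky to show the latter are holomorphic on the degenerate principal series, and then do the hard combinatorial cancellation among the $r(\Lambda_s,w)^{-1}$.

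However, the grouping principle you sketch is too vague and, as stated, would not make the cancellation tractable. The paper does \emph{not} group by ``action on the roots crossing between the two Levi blocks''; it groups by the equivalence $w\sim w'\iff w(\Lambda_{s_0})=w'(\Lambda_{s_0})$, which depends on $\alpha$. This orbit grouping has two features that drive the entire argument and that your proposal misses. First, within a single orbit $[w]$ the local normalized operators $\mathcal N(\Lambda_{s,p},\wit{w'})$ all \emph{coincide} on the degenerate principal series (Lemma \ref{proof-main-res-2-3}: they agree on the spherical vector, which generates the whole module by Lemma \ref{proof-main-res-2-2}), so the constant term becomes a sum over orbits of $\bigl(\sum_{w'\in[w]}r(\Lambda_s,w')^{-1}\bigr)$ times a single local factor. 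Second, contributions from distinct orbits are linearly independent (they land in genuinely different principal series $\Ind(w(\Lambda_{s_0}))$), so there is no cancellation \emph{between} orbits --- all the work is to show each orbit sum $\sum_{w'\in[w]}r(\Lambda_{s+t},w')^{-1}$ has at most a simple pole. The paper accomplishes this by showing that each orbit is parametrized by subsets of a collection of disjoint ``intervals of change'' $I_1,\ldots,I_r$, so that the orbit sum factors as $\frac{B_1(t)}{A(t)}\prod_j(a_j(t)+b_j(t))$; each factor $a_j+b_j$ is holomorphic (Lemma \ref{lem:poles} plus a leading-coefficient computation), and $B_1(t)$ carries at most a simple pole, occurring only when $\alpha+1\le m$ (Lemma \ref{lem:B(t)}).

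Your final paragraph also contains a concrete error: for $1\le\alpha\le m-1$ the simple pole is \emph{not} carried by ``precisely one Weyl element''. Several orbits contribute simple poles (see Remark \ref{very--important}: both the $W_\alpha$-orbit and the orbit of $w_0$ do), and these do not cancel precisely because different orbits are linearly independent. Likewise, the holomorphy for $\alpha\ge m$ is not a matter of denominator cancellation in the $w_0$-term alone; it is Lemma \ref{lem:polestogether} applied to every orbit.
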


We define
\begin{equation}\label{rel-e-2}
w_0=\left(\begin{matrix}0 & I_n\\
I_m & 0\end{matrix}\right),
\end{equation}
where $I_n$ and $I_m$ are identity matrices of the corresponding
sizes. Let $\widetilde{w_0}$ be the representative taken by Shahidi (see
Section \ref{de}) for the Weyl group element represented by $w_0$.

For the applications on degenerate Eisenstein series on
$GL_{m+n}(\mathbb R)$ we need the following observation:

\begin{Cor}\label{cor-main-res-2} Assume that $m\le n$. Consider the degenerate Eisenstein
  series (\ref{de-1}) with $\mu=\chi = 1$. Let $s_0 \in
  \{\frac{m+n}{2}-\alpha; \ \  \alpha\in \Bbb Z,  \ \ 0\le \alpha<
  \frac{m+n}{2}\}$. Then, for $\alpha\ge m$, $E(f_{s_0})$ is non--zero if
 and only  $\cal N(\Lambda_{s_0, p},  \wit{w_0})f_{s_0} \neq 0$, for
  all $p\le \infty$. Next, for 
$0\le \alpha\le m-1$,  $(s-s_0)E(f_{s})$ is
  not equal to zero at $s=s_0$ if and only if $\cal N(\Lambda_{s, p} ,
  \wit{w_0})f_s \neq 0$ at $s=s_0$, for
  all $p\le \infty$. We remark that in both cases the normalized intertwining
  operator is holomorphic (see Lemma \ref{proof-main-res-2-3}).
\end{Cor}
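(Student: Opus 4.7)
The plan is to combine Theorem \ref{main-res-2} with a place-by-place analysis of the normalized long-element intertwining operator $\cal N(\Lambda_{s_0, p}, \wit{w_0})$. Let $l = l_{s_0} \in \{0,1\}$ be the pole order of $E(s, \cdot)$ at $s_0$: by Theorem \ref{main-res-2}, $l = 1$ exactly when $1 \le \alpha \le m - 1$, and $l = 0$ for the remaining $\alpha$ in the stated range. In either case Theorem \ref{main-res-2} realises $f_s \mapsto (s - s_0)^l E(s, f_s)|_{s = s_0}$ as a surjection $I(s_0) \twoheadrightarrow \pi$ onto the unique spherical irreducible constituent $\pi = \bigotimes_{p \le \infty} \pi_p$ of $I(s_0)$. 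Because $\pi$ is spherical it appears with multiplicity one, so the kernel of this surjection is the unique maximal proper $(\frakgl(m+n), K_\infty) \times \prod_{p < \infty} GL_{m+n}(\Bbb Q_p)$-submodule $\calM \subset I(s_0)$.

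Next, I would analyse $\cal N(\Lambda_{s_0, p}, \wit{w_0})$ one place at a time. Its holomorphy is supplied by Lemma \ref{proof-main-res-2-3}, and its non-vanishing is forced by (\ref{de-5000000000}) together with the Gindikin--Karpelevi\v c formula applied to the spherical vector. Combining these inputs with the detailed composition series of local degenerate principal series for $GL_{m+n}$ --- Howe--Lee \cite{HL} at the archimedean place and Zelevinsky \cite{Z} at $p$-adic places --- one concludes that the image of $\cal N(\Lambda_{s_0, p}, \wit{w_0})$ is a copy of $\pi_p$, hence its kernel is exactly the unique maximal proper submodule $\calM_p \subset I_p(s_0)$. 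Thus for a pure tensor $f_{s_0} = \bigotimes_p f_{s_0, p}$, membership $f_{s_0} \in \calM$ is equivalent to the existence of some $p$ with $\cal N(\Lambda_{s_0, p}, \wit{w_0}) f_{s_0, p} = 0$.

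Assembling the two steps yields the corollary: $(s - s_0)^l E(s, f_s)|_{s = s_0}$ is non-zero exactly when $f_{s_0} \notin \calM$, and by the previous step this happens precisely when $\cal N(\Lambda_{s_0, p}, \wit{w_0}) f_{s_0, p} \neq 0$ at every place $p \le \infty$. The principal obstacle will be the local identification of the image of $\cal N(\Lambda_{s_0, p}, \wit{w_0})$ with $\pi_p$ itself, rather than with some strictly larger subquotient having $\pi_p$ as its unique spherical constituent; establishing this requires a careful inspection of the Jordan--H\"older structure of $I_p(s_0)$ at each reducibility point $s_0 = \frac{m+n}{2} - \alpha$, as supplied by the cited local works together with the normalization conventions underpinning Lemma \ref{proof-main-res-2-3}.
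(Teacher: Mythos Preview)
Your argument is correct and is essentially a repackaging of the paper's reasoning, though organised differently. The paper derives the corollary ``in the course of'' proving Theorem~\ref{main-res-2}: it works directly with the constant-term expansion (\ref{eq:grouping}), uses Lemma~\ref{proof-main-res-2-4} to see that every local operator $\cal N(\Lambda_{s,p},\wit{w})$ has the same kernel on $I_p(s_0)$ as $\cal N(\Lambda_{s,p},\wit{w_0})$, and then invokes the orbit analysis (Lemmas~\ref{proof-main-res-2-5}--\ref{proof-main-res-2-7}) together with Remark~\ref{very--important} to guarantee that the coefficient attached to the orbit of $w_0$ survives after removing the pole; Lemma~\ref{lem:const0} then ties non-vanishing of the constant term to non-vanishing of the Eisenstein series. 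You instead take Theorem~\ref{main-res-2} as input, identify the kernel of the Eisenstein map with the unique maximal proper submodule of $I(s_0)$, and test membership in this submodule place by place via $\cal N(\Lambda_{s_0,p},\wit{w_0})$. Both routes rest on the same local fact, and your version has the virtue of making the structure transparent once Theorem~\ref{main-res-2} is available.

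Regarding the ``principal obstacle'' you flag: the identification of the image of $\cal N(\Lambda_{s_0,p},\wit{w_0})$ with $\pi_p$ does not require a further case-by-case inspection of the Jordan--H\"older structure. It follows formally from Lemma~\ref{proof-main-res-2-1}: since $\pi_p$ is the \emph{unique} irreducible quotient of the source and the \emph{unique} irreducible submodule of the target, and occurs with multiplicity one (being the spherical constituent), any non-zero intertwining operator must have image with $\pi_p$ as both socle and head, hence image exactly $\pi_p$. This is precisely the content of Corollary~\ref{proof-main-res-2-1-c} and the second assertion of Lemma~\ref{proof-main-res-2-4}, so you may simply cite those.
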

\begin{proof} This is proved in the course of the proof of Theorem
  \ref{main-res-2} (applying the first part of Lemma
  \ref{proof-main-res-2-4}, the expression (\ref{eq:grouping}), Lemma 
\ref{lem:const0}, together with  Lemmas \ref{proof-main-res-2-5},
  \ref{proof-main-res-2-6}, and  \ref{proof-main-res-2-7}); it is important that $\wit{w_0}$ survives the
  cancellation of poles in Lemma \ref{proof-main-res-2-6} by 
Remark \ref{very--important}).
\end{proof} 

\vskip .2in

We prove Theorem \ref{main-res-1} in Section \ref{ct}. 
Theorem \ref{main-res-2} is proved in Sections \ref{ct3000} and \ref{conclusion}.
The following theorem is a complement to Theorem \ref{main-res-2}.

\begin{Thm}\label{main-res-4} Assume that $m\le n$. Then, for $\mu\neq
  \chi$ and  $s \in
  \{\frac{m+n}{2}-\alpha; \ \  \alpha\in \Bbb Z,  \ \ 0\le \alpha<
  \frac{m+n}{2} \}$,   the  map (\ref{de-100}) is an embedding.
The degenerate Eisenstein series (\ref{de-1}) is  holomorphic if
  $\chi_\infty\mu^{-1}_\infty$ is not a
sign character, or if  $\alpha\le m$.
\end{Thm}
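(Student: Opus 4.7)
I compute $E_{const}(s,f)$ along the Borel using formula (\ref{de-3}), analyze the global normalizing factors $r(\Lambda_s,w)^{-1}$ and the local normalized operators $\mathcal{N}(\Lambda_{s,p},\widetilde w)$ separately, and lift the conclusions back to $E(s,f)$ via Lemma \ref{lem:const0}. The essential simplification from the hypothesis $\mu\ne\chi$ is the following: the Weyl elements $w$ satisfying $w(\Delta\setminus\{\beta\})>0$ are precisely the $(m,n)$-shuffles, and their inversions are exactly the positive roots $e_i-e_j$ with $i\le m<j\le m+n$ and $w(i)>w(j)$. For every such inversion one has $\Lambda_s\circ\alpha^\vee=\chi\mu^{-1}|\cdot|^t$ with $t$ real, and because $\chi\mu^{-1}$ is a nontrivial unitary Hecke character the $L$-functions $L(s',\chi\mu^{-1})$ are entire. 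Thus $r(\Lambda_s,w)^{-1}$ carries no $L$-function poles at the real points of interest, in sharp contrast with the situation of Theorem \ref{main-res-2}.

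\textbf{Holomorphy.}
It therefore suffices to show that at $s_0=\tfrac{m+n}{2}-\alpha$ the full local operator factor in each term of (\ref{de-3}) is holomorphic under either stated hypothesis. At a finite prime $p$ with $\chi_p\ne\mu_p$, Zelevinsky's linkage criterion \cite{Z} gives irreducibility of the principal series induced from $\Lambda_{s_0,p}$, so $\mathcal{N}(\Lambda_{s_0,p},\widetilde w)$ is a holomorphic isomorphism for every shuffle $w$; at the finitely many primes where $\chi_p=\mu_p$, the $p$-adic analysis of Theorem \ref{main-res-2} applies verbatim locally, and the absence of global $L$-function poles keeps the corresponding combinatorics manageable. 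At $\infty$, Howe--Lee \cite{HL} classify reducibility of the relevant degenerate principal series on $GL_{m+n}(\Bbb R)$: the hypothesis that $\chi_\infty\mu_\infty^{-1}$ is not a sign character rules out any Howe--Lee linkage for any $s_0$, while the hypothesis $\alpha\le m$ places the archimedean parameter in a range where the two Howe--Lee segments do not link even when $\chi_\infty\mu_\infty^{-1}$ is a sign character. In either case every $\mathcal{N}(\Lambda_{s_0,\infty},\widetilde w)$ is holomorphic, giving holomorphy of $E_{const}(s,f)$ at $s_0$, and Lemma \ref{lem:const0} promotes this to holomorphy of $E(s,f)$.

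\textbf{Embedding.}
For the map (\ref{de-100}) to be an embedding it suffices by Lemma \ref{lem:const0} that the constant term of its leading Laurent coefficient be nonzero whenever $f\ne 0$. The longest shuffle $\widetilde{w_0}$ of (\ref{rel-e-2}) rearranges $\Lambda_{s_0}$ so that the $n$ torus entries bearing $\mu$ occupy the first $n$ slots and the $m$ entries bearing $\chi$ occupy the last $m$. Because $\chi$ and $\mu$ are distinct unitary Hecke characters, the equation $\chi|\cdot|^a=\mu|\cdot|^b$ has no real solutions; hence $w_0\Lambda_{s_0}$ is distinct from $w\Lambda_{s_0}$ for every other shuffle $w$, so the $w_0$-summand of (\ref{de-3}) sits in a principal series whose inducing character is distinct from that of every other summand, ruling out any cross-term cancellation. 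Since $r(\Lambda_{s_0},w_0)^{-1}\ne 0$ and the local operators at $w_0$ are injective by the holomorphy step above (together with the Gindikin--Karpelevi\v c formula (\ref{de-5000000000}) on unramified factors), the $w_0$-contribution to the leading Laurent coefficient of $E_{const}(s,f)$ is a nonzero injective image of $f$, proving that (\ref{de-100}) is an embedding.

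\textbf{Main obstacle.}
The delicate point is to verify, when $\chi_\infty\mu_\infty^{-1}$ is a sign character, that the hypothesis $\alpha\le m$ actually suffices to prevent Howe--Lee linkage at the archimedean place for every $(m,n)$-shuffle simultaneously, so that all archimedean normalized intertwining operators are holomorphic at $s_0$. Once this archimedean holomorphy is in hand, the rest of the argument is considerably lighter than the combinatorial analysis carried out in Sections \ref{ct3000}--\ref{conclusion} for Theorem \ref{main-res-2}, since no global $L$-function pole ever needs to be tracked or cancelled.
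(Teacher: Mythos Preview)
Your overall plan is right and tracks the paper's argument: use (\ref{de-3}), observe that $r(\Lambda_s,w)^{-1}$ is holomorphic by Lemma \ref{ct-6} because $\chi\ne\mu$, and then reduce to controlling the local normalized operators. But there is a genuine gap in the embedding step.

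You base the embedding on the $w_0$-term and assert that the local operators $\mathcal N(\Lambda_{s_0,p},\widetilde{w_0})$ are injective. This fails. Even though $\chi\ne\mu$ globally, one can have $\chi_p=\mu_p$ at infinitely many finite primes (for instance when $\chi\mu^{-1}$ is a quadratic character: at every split prime the local components coincide). At such a prime with $0\le\alpha\le m-1$, Lemma \ref{proof-main-res-2-1} says $I_p(s_0)$ is reducible, and Lemma \ref{proof-main-res-2-4} identifies the image of $\mathcal N(\Lambda_{s_0,p},\widetilde{w_0})$ with the spherical quotient, so the kernel is nonzero. Hence for suitable $f$ the $w_0$-term vanishes while $f\ne 0$, and your non-cancellation argument does not save you. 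The paper avoids this by using $w=\mathrm{id}$ rather than $w_0$: the identity summand in (\ref{de-3}) is $f$ itself, and Lemma \ref{ct-2} (or Lemma \ref{ct-200}) separates it from every other summand since $\chi\ne\mu$ forces all shuffle images $w\Lambda_{s_0}$ to be pairwise distinct. (Incidentally, your claim that the \emph{full} principal series induced from $\Lambda_{s_0,p}$ is irreducible is wrong in any case, since consecutive exponents within each block differ by $1$; what is true and sufficient is that each rank-one $GL_2$ operator in the shuffle decomposition is holomorphic.)

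A second gap: your embedding argument invokes ``the holomorphy step above,'' but that step only applies under the extra hypotheses ($\chi_\infty\mu_\infty^{-1}$ not sign, or $\alpha\le m$). When $\chi_\infty\mu_\infty^{-1}$ is a sign character and $\alpha>m$, you have no argument for embedding, yet the theorem asserts it unconditionally. The paper handles this case by a different mechanism: precisely in that range the archimedean analysis forces the local degenerate principal series (\ref{dldldldl-2}) to be irreducible, and combining with $p$-adic irreducibility (from \cite{Z} when $\chi_p\ne\mu_p$, from Lemma \ref{proof-main-res-2-1} when $\chi_p=\mu_p$ since then $m\le\alpha<n$) gives global irreducibility of $I(s_0)$, so (\ref{de-100}) is automatically injective once nonzero.

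Finally, your ``Main obstacle'' paragraph correctly flags the delicate archimedean point but does not resolve it. The paper's resolution is not that the Howe--Lee segments fail to link at the $GL_2$ level---individual rank-one operators can be reducible---but rather that one regroups the chain of rank-one operators sharing a fixed $j$ into the single operator (\ref{dldldldl-1}) acting on a degenerate principal series of $GL_{m-i+1}$, and then Howe--Lee irreducibility of \emph{that} representation (when the $\mu$-exponent lies inside the $\chi$-segment) forces holomorphy; the complementary case is exactly what pushes $\alpha>m$.
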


In this case we do not compute the order of the pole if $\alpha\ge 1$. The problem is caused with local normalized
intertwining operators on the archimedean component.

\vskip .2in
Now, we  prove the main result about Eisenstein series 
$E_\infty(s, f_\infty)$ given by   (\ref{real-e-4}).

\begin{Thm}\label{main-res-3}  Let $s>0$ be a real number. Assume that
  $m\le n$. Then, we have the following:
\begin{itemize}
\item[(i)] The degenerate Eisenstein
  series $E_\infty(s, f_\infty)$, is holomorphic and non--zero for $s \not \in
  \{\frac{m+n}{2}-\alpha; \ \  \alpha\in \Bbb Z,  \ \ 0\le \alpha<
  \frac{m+n}{2} \}$.

\item[(ii)]  Let $s \in
  \{\frac{m+n}{2}-\alpha; \ \  \alpha\in \Bbb Z,  \ \ m\le \alpha<
  \frac{m+n}{2} \}$. Then, $E_\infty(s, \cdot)$ is holomorphic, and 
$E_\infty(s, f_\infty)\neq 0$   if and only if $\cal N(\Lambda_{s, \infty} ,
  \wit{w_0})f_\infty \neq 0$.

\item[(iii)]  Let $s \in  \{\frac{m+n}{2}-\alpha; \ \  \alpha\in \Bbb Z,  \ \ 0\le \alpha\le m-1 \}$. Then, 
$E_\infty(s, \cdot)$ has at most simple  pole.

\item[(iv)] In the settings of (iii), if 
  $0\le \alpha\le m-1$, then $E_\infty(s, f_\infty)$ has a
  simple pole if and only if $\cal N(\Lambda_{s, \infty} ,
  \wit{w_0})f_\infty \neq 0$. 
\end{itemize}
(We remark that, in any case, if $f_\infty$ is spherical, then $\cal N(\Lambda_{s, \infty} ,
  \wit{w_0})f_\infty \neq 0$  holds by  (\ref{de-5000000000}).)
\end{Thm}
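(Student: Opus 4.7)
The plan is to reduce every assertion about $E_\infty(s, f_\infty)$ to the corresponding adelic statement via Lemma \ref{real-e-3}, and then invoke Theorems \ref{main-res-1} and \ref{main-res-2} together with Corollary \ref{cor-main-res-2}. Since Lemma \ref{real-e-3} transfers both the order of the pole and the non-vanishing of the residue in both directions, this reduction is lossless for our purposes. Part (i) is then immediate from Theorem \ref{main-res-1}: the adelic Eisenstein series $E(s, f_\infty \otimes_{p<\infty} f_p)$ is holomorphic and non-zero outside the critical set, so the same holds for $E_\infty(s, f_\infty)$.

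For the pole bound in part (iii) and the holomorphy portion of part (ii), Theorem \ref{main-res-2} tells us that the adelic Eisenstein series has at most a simple pole in the critical set, with poles confined to $1 \le \alpha \le m-1$; Lemma \ref{real-e-3} transports this to $E_\infty$. The non-vanishing criteria in (ii) and (iv) require more: Corollary \ref{cor-main-res-2}, which applies since $\chi = \mu = 1$ in the setup of Section \ref{real-e}, says that non-vanishing of the adelic Eisenstein series, or of its residue, is equivalent to $\mathcal{N}(\Lambda_{s, p}, \wit{w_0}) f_p \ne 0$ at every place $p \le \infty$. For the factorisable section that produces $E_\infty(s, f_\infty)$ via (\ref{real-e-4}), this splits into non-vanishing of the archimedean operator on $f_\infty$ together with the corresponding condition at every finite place. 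Hence the theorem will follow once the latter is verified for the specific sections $f_p$ prescribed in items (ii) and (iii) of Section \ref{real-e}. At the unramified primes $p \notin S$, the normalised spherical vector $f_p$ maps by (\ref{de-5000000000}) to the normalised spherical vector in the $w_0$-transformed induced representation, which is non-zero.

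The principal obstacle is the ramified finite places $p \in S$, where $f_p$ is supported on $P_{m, n}(\bbQ_p) L_p(l)$ and right $L_p(l)$-invariant. Here the plan is to evaluate $A(\Lambda_{s, p}, \wit{w_0}) f_p$ at the identity by unfolding the defining integral over $U_{m, n}^-(\bbQ_p)$, then use the Iwahori factorisation of $L_p(l)$ from Lemma \ref{cong-subgrps-1} together with the support condition on $f_p$ to identify the result, up to an explicit factor, with a local zeta integral of Godement--Jacquet type attached to the principal $L$-function of $GL_n$; compare \cite{J}. The Godement--Jacquet theory then guarantees that this zeta integral is a non-zero rational function of $q_p^{-s}$, and in particular does not vanish at the exceptional values in question. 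Finally, one checks that the normalising factor $r(\Lambda_{s, p}, w_0)$ is regular and non-zero at those values, so that the non-vanishing passes from the unnormalised operator $A(\Lambda_{s, p}, \wit{w_0})$ to the normalised one $\mathcal{N}(\Lambda_{s, p}, \wit{w_0})$. This ramified identification with a Godement--Jacquet zeta integral, together with the careful bookkeeping of the normalising $L$- and $\epsilon$-factors, is the technical heart of the argument; the rest is a direct appeal to the adelic results already established.
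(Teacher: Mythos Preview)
Your overall reduction strategy via Lemma \ref{real-e-3} and Corollary \ref{cor-main-res-2} matches the paper's, and parts (i), (iii), and the holomorphy clause of (ii) are handled exactly as you describe. The serious gap is at the ramified finite places in part (iv), for $1\le \alpha\le m-1$.

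You propose to evaluate the unnormalised operator $A(\Lambda_{s,p},\wit{w_0})f_p$ at a single point, identify the value with a Godement--Jacquet integral, and then pass to the normalised operator by checking that ``the normalising factor $r(\Lambda_{s,p},w_0)$ is regular and non-zero at those values.'' But this last claim is false: by (\ref{x-main-res-1}) the local factor $r(\Lambda_{s,p},w_0)$ contains $L_p(s-\frac{m+n}{2}+i,1)^{-1}$ for $i=1,\ldots,m$, and at $s=\frac{m+n}{2}-\alpha$ with $1\le \alpha\le m-1$ the term $i=\alpha$ gives $L_p(0,1)^{-1}=0$. So $r(\Lambda_{s,p},w_0)=0$ precisely in the range you need, and a single evaluation of $A(\Lambda_{s,p},\wit{w_0})f_p$ gives you nothing after multiplying by zero. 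The paper explicitly flags this: ``In this case above trick does not work since $r(\Lambda_{s,p},w_0)=0$.'' What the paper does instead is evaluate $\cal N(\Lambda_{s,p},\wit{w_0})f_p$ at a whole \emph{family} of points $\wit{w_0}\left(\begin{smallmatrix} I_m & y \\ 0 & I_n\end{smallmatrix}\right)$, reduce (for $Re(s)$ large) to integrals (\ref{x-main-res-3})--(\ref{x-main-res-4}), and recognise these as Godement--Jacquet zeta integrals for the trivial representation of $GL_m(\bbQ_p)$. The point is not that any one such integral is non-zero, but that together they generate the fractional ideal with generator $\prod_{i=1}^m L_p(s-\frac{m+n}{2}+i,1)^{-1}$, whose pole at $i=\alpha$ exactly cancels the zero of $r(\Lambda_{s,p},w_0)$. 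Only then does analytic continuation give $\cal N(\Lambda_{s,p},\wit{w_0})f_p\neq 0$.

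A second, smaller divergence: for part (ii) with $m\le \alpha<\frac{m+n}{2}$, the paper does not touch zeta integrals at all. Lemma \ref{proof-main-res-2-1} shows the local induced representations are irreducible in this range, so the holomorphic normalised operator $\cal N(\Lambda_{s,p},\wit{w_0})$ is an isomorphism and $\cal N(\Lambda_{s,p},\wit{w_0})f_p\neq 0$ is automatic. Your zeta-integral approach would again stumble at $\alpha=m$, where $r(\Lambda_{s,p},w_0)=0$ for the same reason as above.
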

\begin{proof} (i) is a direct consequence of Lemma
  \ref{real-e-3} and Theorem \ref{main-res-1}. Also, 
(iii) is a direct consequence of Lemma  \ref{real-e-3} and Theorem
\ref{main-res-1}. The same is true for holomorphicity of $E_\infty(s,
  \cdot)$ mentioned in (ii). For the other claims we need to develop 
more sophisticated methods.

Let $s \in  \{\frac{m+n}{2}-\alpha; \ \  \alpha\in \Bbb Z,  \ \ 0\le \alpha<
  \frac{m+n}{2} \}$. First, we assume that $\alpha\ge m$.   
By Lemma \ref{real-e-3}, $E_\infty(s, f_\infty)\neq 0$ if and
  only if $E(s, f_\infty \otimes_{p<
  \infty}f_p)$ satisfies the same.  By Corollary
  \ref{cor-main-res-2}, this is equivalent to $\cal N(\Lambda_{s} ,
  \wit{w_0})\left(f_\infty \otimes_{p<
  \infty}f_p\right)  \neq 0$. Equivalently, using (ii) and (iii) of
  Section \ref{real-e} and (\ref{de-5000000000}), we can write 
$$
\cal N(\Lambda_{s, \infty},   \wit{w_0})f_\infty \otimes 
\cal N(\Lambda_{s, p_1},  \wit{w_0})f_{p_1}\otimes \cdots 
\otimes \cal N(\Lambda_{s, p_u},  \wit{w_0})f_{p_u} \otimes_{p\not\in
  \{p_1, \ldots, p_u\}} f_{w_0, p}\neq 0.
$$
In Lemma
\ref{proof-main-res-2-1}, we prove that the induced 
representations $\Ind_{P_{m, n}(\Bbb Q_p)}^{GL_{m+n}(\Bbb Q_p)}\left(|\det |_p^{s/2} \otimes
  |\det |_p^{-s/2}\right)$ are irreducible for all $p\le \infty$. In
this case, the conditions from  Theorem \ref{main-res-3} always
hold. In particular, $N(\Lambda_{s, p},  \wit{w_0})f_{p}\neq 0$ for
all $p \in \{p_1, \ldots, p_u\}$.  This proves (ii). 

For the proof of (iv) we use similar criterion based on Corollary
 \ref{cor-main-res-2}.  Let us recall that by (\ref{ct-4}), $r(\Lambda_{s, p}, w_0)$ is given by 
\begin{equation}\label{x-main-res-1}
\prod_{i=1}^m \frac{L(s+ \frac{n-m}{2}+i, 1)}
{L(s- \frac{n+m}{2}+i, 1)},
\end{equation}
up to a monomial in $p^{-s}$, and, for $s>0$ large enough, we have  (see Section \ref{de})
\begin{equation}\label{x-main-res-2}
\cal N(\Lambda_{s, p},  \wit{w_0})f_{p}(g)= r(\Lambda_{s, p}, w_0)
\int_{U_{n, m}(\Bbb
  Q_p)}f_p(\wit{w_0}^{-1}ug)du, \ \ g\in GL_{m+n}(\Bbb Q_p).
\end{equation}
Now, we consider the case  $\alpha=0$. We show that 
$$
\cal N(\Lambda_{s, p},  \wit{w_0})f_{p}(\wit{w_0})\neq 0.
$$
This will imply the claim. First, by abusing the notation slightly,
for $s>0$ large enough, using (\ref{x-main-res-2}) with $g=\wit{w_0}$,
(iii) of Section \ref{real-e} implies that the
integral $\int_{U_{n, m}(\Bbb  Q_p)}f_p(\wit{w_0}^{-1}u\wit{w_0})du$ is
a constant $\neq 0$  independent of $s$. We can take equal to one for
simplicity. Then, after analytic continuation, for our $s=(m+n)/2-\alpha=(m+n)/2$, we
obtain  
$$
\cal N(\Lambda_{s, p},  \wit{w_0})f_{p}(\wit{w_0})=r(\Lambda_{s, p},
w_0).
$$
But (\ref{x-main-res-1}) shows that $r(\Lambda_{s, p}, w_0)\neq
0$. This proves (iv) with $\alpha=0$. Now, we consider the more
difficult case $1\le \alpha\le m-1$.  In this case above trick does not work since
$r(\Lambda_{s, p}, w_0)=0$.  Then, again by abusing the notation slightly,
for $s>0$ large enough, we use (\ref{x-main-res-2}) with 
$$
g=\wit{w_0} \left(\begin{matrix}I_m& y\\ 0 &
      I_n\end{matrix}\right), \ \  y\in M_{m\times n}(\Bbb Q_p)
$$
instead of $g=\wit{w_0}$. We have
$$
\int_{U_{n, m}(\Bbb  Q_p)}f_p\left(\wit{w_0}^{-1}u\wit{w_0} \left(\begin{matrix}I_m& y\\ 0 &
      I_n\end{matrix}\right)\right)du=
\int_{M_{n\times m}(\Bbb Q_p)}
f_p\left(  \left(\begin{matrix}I_m& 0\\ x &
      I_n\end{matrix}\right)
 \left(\begin{matrix}I_m& y\\ 0 &
      I_n\end{matrix}\right)\right) dx
$$
Since $\supp{(f_p)}\subset P_{m, n}U^-_{m, n}$ (see (iii) in Section 
\ref{real-e}), this integral takes the following form:
\begin{equation}\label{x-main-res-3}
\int
\left|\det{( I_m -y(I_n+xy)^{-1}x)}\right|_p^{s/2+n/2}
\left|\det{(I_n+xy)}\right|_p^{-s/2-m/2} dx,
\end{equation}
where we integrate over the set of all  $x\in M_{n\times m}(\Bbb Q_p)$
satisfying that $\det{(I_n+xy)}\neq 0$ and $(I_n+xy)^{-1}x \in
M_{n\times m}(p^l\Bbb Z_p)$. 

Now, we complete the proof of (iv) considering the case $m=n$ first. 
Then, $y\in M_{n\times n}(\Bbb Q_p)$. In fact, we may assume that $y\in
GL_{n}(\Bbb Q_p)$. Then, we can transform the first function in
(\ref{x-main-res-3}) to the following form:
\begin{align*}
\det{( I_n -y(I_n+xy)^{-1}x)}&=
\det{( yy^{-1} -y(I_n+xy)^{-1}xyy^{-1})}= \det{( I_n -
  (I_n+xy)^{-1}xy)}\\
&=\det{(I_n+xy)}^{-1} \det{(I_n+xy
  -xy)}=\det{(I_n+xy)}^{-1}.
\end{align*}
Thus, the integral is transformed into a more convenient form
\begin{equation}\label{x-main-res-4}
\int
\left|\det{(I_n+xy)}\right|_p^{-s-n} dx.
\end{equation}
After, the substitution $x\leftrightarrow xy$, and then the
substitution $x\leftrightarrow x+1$, the integral becomes
$$
\left|\det{y}\right|^{-n}_p
\int
\left|\det{x}\right|_p^{-s-n} dx,
$$
where we integrate over the set of all  $x\in GL_{n}(\Bbb Q_p)$, 
$x^{-1} \in I_n + M_{n\times n}(p^l\Bbb Z_p)y$. Ignoring the constants we
look at the family of integrals
$$
\int_{GL_{n}(\Bbb Q_p)}
\left|\det{x}\right|_p^{-s-n} 1_{I_n + M_{n\times n}(p^l\Bbb
  Z_p)y}(x^{-1})d^*x=
\int_{GL_{n}(\Bbb Q_p)}
\left|\det{x}\right|_p^{s} 1_{I_n + M_{n\times n}(p^l\Bbb
  Z_p)y}(x)d^*x
$$
where $y$ ranges over $GL_{n}(\Bbb Q_p)$, where
$d^*x=\left|\det{x}\right|_p^{-n}dx$ is a Haar measure on $GL_{n}(\Bbb
Q_p)$.  Let $z\in GL_{n}(\Bbb Q_p)$, then the integral is equal to 
$$
\int_{GL_{n}(\Bbb Q_p)}
\left|\det{xz}\right|_p^{s} 1_{I_n + M_{n\times n}(p^l\Bbb
  Z_p)y}(xz)d^*x= \left|\det{z}\right|_p^{s}
\int_{GL_{n}(\Bbb Q_p)}
\left|\det{x}\right|_p^{s} 1_{z^{-1} + M_{n\times n}(p^l\Bbb
  Z_p)yz^{-1}}(x)d^*x.
$$
The reader my observe that when $y, z$ range over $GL_{n}(\Bbb Q_p)$,
then the characteristic functions $1_{z^{-1} + M_{n\times n}(p^l\Bbb
  Z_p)yz^{-1}}$ span the space of all Schwartz functions on 
$M_{n\times n}(\Bbb Q_p)$. We conclude that the family of integrals 
we obtained is a family of zeta integrals studied by Jacquet and
Godement  attached to the trivial representation of 
$GL_{n}(\Bbb Q_p)$ \cite{J}. They form a fractional ideal in $\Bbb
C[p^{-s}, p^s]$ whose generator is the inverse of the principal $L$--function
$$
L(s-(n-1)/2, 1_{GL_{n}(\Bbb Q_p)})=\prod_{i=0}^{n-1} L(s- i, 1).
$$

Now, we are ready to conclude the proof. In view of the form of the
normalization factor $r(\Lambda_{s, p}, w_0)$ for $m=n$ (see
(\ref{x-main-res-1}), we conclude that after the analytic continuation
for each $s=(m+n)/2-\alpha=n-\alpha$, $1\le \alpha \le n-1$, we can
find $y\in GL_n(\Bbb Q_p)$ such that 
$$
\cal N(\Lambda_{s, p},  \wit{w_0})f_{p}\left(\wit{w_0} \left(\begin{matrix}I_n& y\\ 0 &
      I_n\end{matrix}\right) \right)\neq 0.
$$
This completes the proof of (iv) in the case $m=n$.

Now, we consider the case $m<n$. In order to deal with the integral (\ref{x-main-res-3}), we write 
$x\in  M_{n\times m}(\Bbb Q_p)$ as column 
$x=\left(\begin{matrix}x' \\ x{''}
\end{matrix}\right)$, where $x'\in M_{m\times m}(\Bbb Q_p), x{''}\in M_{(n-m)\times m}(\Bbb Q_p)$. We try to find $
 y\in M_{m\times n}(\Bbb Q_p)$ in the form 
$y=\left(\begin{matrix}y' &  0 \end{matrix}\right)$, where  $y'\in M_{m\times m}(\Bbb Q_p)$ is invertible as it was $y$ in the case $m=n$ above. 
We insert this into (\ref{x-main-res-3}), and find the following:

$$
\int 
\left|\det{( I_m -y'(I_m+x'y')^{-1}x')}\right|_p^{s/2+n/2}
\left|\det{(I_m+x'y')}\right|_p^{-s/2-m/2} \left(\int dx{''}\right) dx',
$$
where the inner integral is over $x{''}\left( I_m -y'(I_m+x'y')^{-1}x')
\right)\in M_{(n-m)\times m}(p^l\Bbb Z_p)$, and outer integral is over 
 $x'\in M_{m\times m}(\Bbb Q_p)$
satisfying that $\det{(I_m+x'y')}\neq 0$ and $(I_m+x'y')^{-1}x' \in
M_{m\times m}(p^l\Bbb Z_p)$. After an obvious change of variables, the inner integral is equal to 
$$
\left|\det{( I_m -y'(I_m+x'y')^{-1}x')}\right|_p^{m-n},
$$ 
up to a volume of $ M_{(n-m)\times m}(p^l\Bbb Z_p)$ which we ignore. 
Note that the matrix inside this expression is 
non--singular (see the case $m=n$ above; see the computation before (\ref{x-main-res-4})), 
and we obtain 
$$
\int 
\left|\det{( I_m -y'(I_m+x'y')^{-1}x')}\right|_p^{s/2 -n/2+m}
\left|\det{(I_m+x'y')}\right|_p^{-s/2-m/2}  dx',
$$
where we integrate over the same area as before. As in the case $m=n$, this
can be transformed into 
$$
\int 
\left|\det{(I_m+x'y')}\right|_p^{-(s+(m-n)/2) -m}  dx'.
$$
As this point we are at (\ref{x-main-res-4}) with $n$ replaced by $m$, and $s$ replaced by 
$s+(m-n)/2$. As before, we obtain the family of zeta integrals which have a common denominator 
equal to the inverse of 
$$
L(s+(m-n)/2-(m-1)/2, 1_{GL_{m}(\Bbb Q_p)})=\prod_{i=0}^{m-1} L(s+(m-n)/2- i, 1)=
\prod_{i=1}^{m} L(s-(m+n)/2- i, 1).
$$
If we compare this to the expression for the normalization factor given by 
(\ref{x-main-res-1}), we conclude the proof of (iv).
\end{proof}

\section{The proof of Theorem \ref{main-res-1}}\label{ct}

The proof of Theorem \ref{main-res-1} we start by series of
preliminary results. Some of them will be useful later for the proof
of our other results.

We use the notation introduced in Section \ref{de}.
Let $X(T)$ be the group of characters of $T$. It is a free $\bbZ$--module 
$$X(T)\simeq \bbZ \phi_1\ \oplus \ \bbZ \phi_2\ \oplus \cdots \ \oplus \bbZ
\phi_{m+n},$$
where $\phi_i$ is defined by  $\phi_i(\diag(t_1,  t_2, \ldots, t_{m+n}))=
t_i$, $1\le i\le m+n$. The Weyl group $W$ acts on $X(T)$ as follows:
$$
p.\phi_i=\phi_{p(i)}.
$$
We have $\Sigma=\Sigma^+\cup (-\Sigma^+)$, and we have:
$$
\Sigma^+= \{\phi_i-\phi_j ; \ 1\le i< j\le m+n \}.
$$
Let $\alpha_i=\phi_i-\phi_{i+1}$, $1\le i \le m+n-1$.  
Then  $\Delta=\{\alpha_1, \alpha_2, \cdots, \alpha_{m+n-1}\}$.
In this notation, we have $\beta=\alpha_m$, where $\beta$ is defined
in Section \ref{de}.

We observe that $p(\phi_i-\phi_j)\in \Sigma^+$, 
for $1\le i< j\le m+n$ if and only if $p(i)< p(j)$. This observation immediately 
implies that 
\begin{multline}\label{ct-1}
\{w\in W; \ 
w(\Delta\setminus \{\beta\})>0\}=\\
=\{p\in W; \ \ 
p(1)<\cdots < p(m),  \ \ p(m+1)<\cdots <p(m+n)\}. 
\end{multline}

\begin{Lem}\label{ct-2} Assume that $s\ge 0$ is a real number. Let
$\Lambda_s$ be given by (\ref{de-4}). For $w$ from the set given 
by (\ref{ct-1}), we have that $w(\Lambda_s)=\Lambda_s$ implies $w=id$,
or  $s=0$ and $m=n$ in which case there are two choices for $w$.
\end{Lem}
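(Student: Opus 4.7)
My plan is to compare $\Lambda_s$ and $w(\Lambda_s)$ component-by-component. Writing $\Lambda_s=\bigotimes_{i=1}^{m+n}\Lambda_{s,i}$, one reads from (\ref{de-4}) that $\Lambda_{s,i}=\chi\,|\cdot|^{a_i}$ for $1\le i\le m$ with $a_i=\tfrac{s-(m-1)}{2}+(i-1)$, and $\Lambda_{s,m+j}=\mu\,|\cdot|^{b_j}$ for $1\le j\le n$ with $b_j=\tfrac{-s-(n-1)}{2}+(j-1)$. Under the convention $p.\phi_i=\phi_{p(i)}$, the equality $w(\Lambda_s)=\Lambda_s$ translates into the component-wise condition $\Lambda_{s,w(i)}=\Lambda_{s,i}$ for every $i$.

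Any $w$ in the set (\ref{ct-1}) is determined by $A:=w(\{1,\ldots,m\})=\{p_1<\cdots<p_m\}$: one has $w(i)=p_i$ and $w(m+j)=q_j$, where $\{q_1<\cdots<q_n\}=\{1,\ldots,m+n\}\setminus A$. Whenever $p_i\le m$, the condition $a_{p_i}=a_i$ forces $p_i=i$; whenever $q_j>m$, the condition $b_{q_j-m}=b_j$ forces $q_j=m+j$. Consequently, if $w\ne id$ and $i_0$ denotes the smallest index with $p_{i_0}>m$, order-preservation pins down
$$A=\{1,\ldots,i_0-1\}\cup\{m+1,\ldots,m+j_0-1\},\qquad i_0+j_0=m+2,$$
so that $w$ shuffles $\{i_0,\ldots,m\}$ with $\{m+1,\ldots,m+j_0-1\}$ order-preservingly. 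The remaining crossed matching conditions then collapse to the requirement $\chi=\mu$ together with the single equation
$$s=\frac{m-n}{2}+1-i_0,$$
obtained by equating any $a_i$ with the corresponding $b_{p_i-m}$ for $i\ge i_0$ (the parallel equations coming from the crossed positions on the second block give the same relation).

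From $s\ge 0$ and $m\le n$ one reads off $i_0\le 1+(m-n)/2\le 1$, hence $i_0=1$, $m=n$, $s=0$. Then $j_0=n+1$, so $A=\{n+1,\ldots,2n\}$ and the unique non-identity candidate is the block swap $w(i)=n+i$ for $i\le n$ and $w(n+j)=j$ for $j\le n$; combined with $w=id$ this yields exactly the two possibilities claimed (with the non-identity one actually realising $w(\Lambda_s)=\Lambda_s$ precisely when $\chi=\mu$). The main obstacle is the combinatorial bookkeeping in the second paragraph -- verifying that order-preservation forces the specific shape of $A$ and that all crossed matching conditions collapse to a single equation for $s$; once that is established the rest is arithmetic.
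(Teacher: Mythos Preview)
Your argument is correct. The translation of $w(\Lambda_s)=\Lambda_s$ into $\Lambda_{s,w(i)}=\Lambda_{s,i}$ is consistent with the paper's convention $p.\phi_i=\phi_{p(i)}$, and the combinatorial claim that order-preservation together with the two rigidity constraints ($p_i=i$ whenever $p_i\le m$, $q_j=m+j$ whenever $q_j>m$) forces $A=\{1,\ldots,i_0-1\}\cup\{m+1,\ldots,m+j_0-1\}$ is right: once $\{i_0,\ldots,m\}$ are forced into the $q$-block as $q_1,\ldots,q_{m-i_0+1}$, the remaining $q_j$'s (for $j\ge m-i_0+2$) are pinned to $m+j$, so the gap $\{m+1,\ldots,2m-i_0+1\}$ must lie in $A$.

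Your route differs from the paper's. The paper does not attempt to determine $A$ globally; instead it looks only at the two boundary values $w(m)$ and $w(m+1)$. From $w(m+1)\le m$ it extracts $s\le (m-n)/2$, and from $w(m)\ge m+1$ it extracts $s\le (n-m)/2$; combining the two gives $s\le -|m-n|/2$, hence $s=0$ and $m=n$ (without invoking the standing hypothesis $m\le n$). Your approach is more structural: you parameterize the non-identity solutions by $i_0$, obtain the single equation $s=\tfrac{m-n}{2}+1-i_0$, and then use $s\ge 0$ together with $m\le n$ to force $i_0=1$. The paper's argument is shorter and symmetric in $m,n$; yours has the advantage of explicitly identifying the unique non-identity $w$ (the block swap) and the side condition $\chi=\mu$ along the way, rather than checking them a posteriori.
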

\begin{proof} The requirement $w(\Lambda_s)=\Lambda_s$ is equivalent to 
$w^{-1}(\Lambda_s)=\Lambda_s$. It is easy to see that $w^{-1}$ acts on 
$\Lambda_s$ by permutation of the characters according to $w$ 
in the definition of $\Lambda_s$ given by (\ref{de-4}).

If $w(m+1)=m+1$ or $w(m)=m$, then $w=id$.  If not, then $w(m+1)\le m$.
Thus, $w(m+1)=i+1$ for some $0\le i\le m-1$. This implies 
$\chi |\ |^{\frac{s-(m-1)}{2}+i}= \mu |\ |^{\frac{-s-(n-1)}{2}}$.
Hence, $\chi=\mu$ and $s=(m-n)/2-i$. In particular, $s\le (m-n)/2$.

Similarly, if $w(m)\neq m$, then $w(m)=m+j+1$, for some $0\le j\le
n-1$, then $\chi |\ |^{\frac{s+(m-1)}{2}}= \mu |\ |^{\frac{-s-(n-1)}{2}+j}$.
Hence, $\chi=\mu$ and  $s=-(m+n)/2+j+1$. In particular, 
$s \le -(m+n)/2 +(n-1)+1= (n-m)/2$. 

Thus, combining both inequalities for $s$, we obtain $s\le -|m-n|/2$.
Since $s\ge 0$, we obtain $s=0$ and $m=n$. Now, 
$\Lambda_s$ is of the form 
\begin{multline*}
\chi |\ |^{\frac{-(m-1)}{2}}\otimes  \chi |\ |^{\frac{-(m-1)}{2}+1}\otimes
\cdots \otimes \chi |\ |^{\frac{(m-1)}{2}}\otimes\\
\otimes \chi |\ |^{\frac{-(m-1)}{2}}\otimes  \chi |\ |^{\frac{-(m-1)}{2}+1}\otimes
\cdots \otimes \chi |\ |^{\frac{(m-1)}{2}}.
\end{multline*}
Clearly, there are two such $w$.
\end{proof}

Let $w$ be  from the set given 
by (\ref{ct-1}). We compute the poles of $r(\Lambda_{s} , w)^{-1}$
(see (\ref{de-5})). In view of explicit computation given by Section \ref{ct}, 
 we transform the expression for $r(\Lambda_{s} , w)^{-1}$.

First, we need to determine all $\alpha\in \Sigma^+$,  such that $w(\alpha)<0$.
If we write $\alpha=\phi_i-\phi_j$, $1\le i< j\le m+n$, then we require that
$w(i)> w(j)$. In view of the assumption 
$$
w(1)<\cdots < w(m),  \ \ w(m+1)<\cdots <w(m+n),
$$
we must have $i\le m$ and $m+1\le j$. Using (\ref{de-4}), we have
$$
\Lambda_{s}  \circ\alpha^\vee=\chi |\ |^{\frac{s-(m-1)}{2}+i-1}
\mu^{-1} |\ |^{\frac{s+(n-1)}{2} -j+m+1}=\chi \mu^{-1} |\ |^{s+ \frac{m+n}{2}+i-j}.
$$
Thus, the expression that we need to study is
$$
r(\Lambda_{s} , w)^{-1}=\prod_{\substack{1\le i \le m,\\ m+1\le j \le m+n\\ w(i)> w(j)}}
\frac{L(s+ \frac{m+n}{2}+i-j, \chi \mu^{-1})}{L(s+
  \frac{m+n}{2}+i-j+1, \chi \mu^{-1})}\times (\ast),
$$
where $(\ast)$ is the product of the appropriate $\epsilon$ factors.

Let us fix $i$ and consider only product of $j$'s:
$$
\prod_{\substack{m+1\le j \le m+n\\ w(i)> w(j)}}
\frac{L(s+ \frac{m+n}{2}+i-j, \chi \mu^{-1})}{L(s+ \frac{m+n}{2}+i-j+1, \chi \mu^{-1})}.
$$

Again, because of the assumption $w(m+1)<\cdots <w(m+n)$, we see that the set of all $j$'s which 
contribute 
to the product is of the 
form $m+1, \ldots, j_i$, where $w(j_i)< w(i)$ and $w(j_i+1)> w(i)$. It is possible that 
this set is empty. It is empty precisely when $w(i)=i$ since we must have
$$
w(1)< \cdots <w(i-1)< w(i)=i < w(i+1), \cdots, w(m+n).
$$
Thus, if we have $w(i)>i$ (which is equivalent to $w(i)\neq i$), then
we can write
$$
\frac{\prod_{j=m+1}^{j_i}
L(s+ \frac{m+n}{2}+i-j, \chi \mu^{-1})}{\prod_{j=m+1}^{j_i} 
L(s+ \frac{m+n}{2}+i-(j-1), \chi \mu^{-1})}=
\frac{L(s+ \frac{m+n}{2}+i-j_i, \chi \mu^{-1})}
{L(s+ \frac{m+n}{2}+i-m, \chi \mu^{-1})}.
$$
The whole product is equal to
$$
r(\Lambda_{s} , w)^{-1}=\prod_{w(i)>i} \frac{L(s+ \frac{m+n}{2}+i-j_i, \chi \mu^{-1})}
{L(s+ \frac{n-m}{2}+i, \chi \mu^{-1})}\times (\ast).$$

The set of all $i$ such that $w(i)>i$ is of the form 
$m_w, \cdots, m$ for unique $m_w$.  We observe that 
the following sequence consists of consequent numbers 
\begin{equation}\label{ct-40}
w(i), w(j_i+1), \ldots, w(j_{i+1}), w(i+1).
\end{equation}
The number of elements in the subsequence 
\begin{equation}\label{ct-401}
 w(j_{i}+1), \ldots, w(j_{i+1})
\end{equation}
is clearly the same as the number of elements in the sequence 
 $j_{i}+1, \ldots, j_{i+1}$. Thus, it is equal to $j_{i+1}-j_i$. 
On the other hand, in the sequence  (\ref{ct-40}) there is 
$w(i+1)-w(i)+1$ elements. If we remove from the sequence  (\ref{ct-40})
elements $w(i)$ and $w(i+1)$ we arrive at the sequence (\ref{ct-401}). Thus 
the sequence (\ref{ct-401}) has $w(i+1)-w(i)+1-2=w(i+1)-w(i)-1$ elements. 
This implies the following recursion:
\begin{equation}\label{ct-402}
j_{i+1}=j_i+ w(i+1)-w(i)-1, \ \ m_w\le i \le m-1.
\end{equation}
Similarly, between $w(m_w-1)=m_w-1$ and $w(m_w)$, the consecutive 
elements are $w(m+1), \ldots, w(j_{m_w})$. This implies 
\begin{equation}\label{ct-403}
j_{m_w}=w(m_w)-m_w+m.
\end{equation}
Solving (\ref{ct-402}) and (\ref{ct-403}), we find 
$$
j_{i}=w(i)-i+m.
$$

Thus, the whole product is equal to
\begin{equation}\label{ct-4}
r(\Lambda_{s} , w)^{-1}
=
\prod_{i=m_w}^m \frac{L(s+ \frac{n-m}{2}+2i-w(i), \chi \mu^{-1})}
{L(s+ \frac{n-m}{2}+i, \chi \mu^{-1})}\times (\ast).
\end{equation}

We analyze (\ref{ct-4}) assuming that 
\begin{equation}\label{ct3001}
\text{$s\in \Bbb R$, $s\ge 0$, and $n\ge m$}.
\end{equation}
(From the point of view of the local degenerate principal series this is not serious 
assumption; it can be achieved by 
taking the contragredient and replacing the relative position  of the terms.)

\vskip .2in
In the following lemma we use the some properties of automorphic functions attached to
unitary gr\"ossencharacter. 
Let $\lambda: \Bbb Q^{\times}\setminus \Bbb A^{\times}\longrightarrow \Bbb C^{\times}$ be a
unitary gr\"ossencharacter. Then, $L(s, \lambda)$ is holomorphic and non--vanishing 
for $s>1$. At $s=1$ it is still non--vanishing, but it has a simple pole if and only
if $\lambda$ is trivial. For $0< s< 1$, $L(s, \lambda)$ is holomorphic. At $s=0$, 
it is  non--vanishing, but it has a simple pole if and only
if $\lambda$ is trivial. Finally, for $s<0$,  $L(s, \lambda)$ is holomorphic and 
non--vanishing.

\begin{Lem}\label{ct-6} Assume that (\ref{ct3001}) holds. 
Let $w$ be from the set (\ref{ct-1}). Then, $r(\Lambda_{s} , w)^{-1}$
is holomorphic unless $\chi=\mu$ and there exists an integer
$0\le \alpha\le  \left[\frac{m+n}{2}\right]$ such that 
$$
s= \frac{m+n}{2}-\alpha.
$$
Under these conditions, the order of pole at $s$ is equal to the number of 
$i\in \{m_w, \ldots, m \}$ such that 
$$
n-\alpha-1 \le w(i) - 2i \le n-\alpha,
$$
where $m_w$ is defined to be the least $i\le m$ such that $w(i)>i$. 
Moreover, the set of such $i$'s is not empty only if 
$$
m_w\le \alpha+1.
$$
Finally, the order of pole is 
$$
\le 
\begin{cases}
\min{(m, \alpha+ 1)} -m_w, \ \ \text{if $\alpha=m=n$, and $m_w=1$,}\\
 \min{(m, \alpha+ 1)} -m_w+1, \ \ \text{otherwise.}
\end{cases}
$$
\end{Lem}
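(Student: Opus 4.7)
The plan is to determine the order of pole or zero of each $L$-factor in the product (\ref{ct-4}) at each potential pole $s_0 \ge 0$, then add signed contributions. The key input is: for a unitary gr\"ossencharacter $\lambda$, $L(s,\lambda)$ is entire unless $\lambda$ is trivial, in which case it has simple poles exactly at $s = 0$ and $s = 1$; and the $\epsilon$-factors packaged in $(\ast)$ are entire and non-vanishing. Consequently $r(\Lambda_s, w)^{-1}$ can have a pole only if $\chi = \mu$, which I assume from now on.

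Under this assumption, the $i$-th numerator factor $L(s + \frac{n-m}{2} + 2i - w(i), 1)$ has a simple pole iff its argument lies in $\{0, 1\}$, which yields $s = \frac{m+n}{2} - \alpha$ with $\alpha = n + 2i - w(i)$ or $\alpha = n - 1 + 2i - w(i)$; the shuffle bound $w(i) \le n + i$, valid for any $w$ in the set (\ref{ct-1}), forces $\alpha \ge 0$, and the hypothesis $s \ge 0$ forces $\alpha \le [\frac{m+n}{2}]$. Substituting $s = s_0 := \frac{m+n}{2} - \alpha$ back into (\ref{ct-4}), the numerator pole condition at $i$ becomes $w(i) - 2i \in \{n-\alpha-1,\, n-\alpha\}$, while the denominator factor $L(s + \frac{n-m}{2} + i, 1)$ acquires a pole (hence $r^{-1}$ a zero) iff $i \in \{\alpha - n,\, \alpha - n + 1\}$. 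Because $i \ge m_w \ge 1$ and $\alpha \le [\frac{m+n}{2}] \le n$, the only surviving denominator possibility is $\alpha = n$ with $i = 1$, which together with $\alpha \le [\frac{m+n}{2}]$ forces $m = n$; this $i = 1$ lies in the product range $\{m_w, \ldots, m\}$ only if $m_w = 1$.

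Therefore, outside the exceptional configuration $\alpha = m = n$ with $m_w = 1$, the order of pole of $r^{-1}$ at $s_0$ equals the number of $i \in \{m_w, \ldots, m\}$ with $w(i) - 2i \in \{n-\alpha-1,\, n-\alpha\}$, while in the exceptional configuration one subtracts $1$ for the surviving denominator zero. For the upper bound, the shuffle inequality rewritten as $w(i) - 2i \le n - i$ forces the qualifying indices to satisfy $i \le \alpha + 1$, placing them in $\{m_w, \ldots, \min(m, \alpha+1)\}$; this set has at most $\min(m, \alpha+1) - m_w + 1$ elements, and $m_w \le \alpha + 1$ is necessary for non-emptiness. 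Subtracting $1$ in the exceptional case yields the asymmetric bound stated. The main obstacle is the careful identification and bookkeeping of the exceptional configuration, which is the unique case in which the denominator of $r^{-1}$ contributes a pole within the product range and is responsible for the $-1$ discrepancy.
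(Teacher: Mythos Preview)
Your argument is correct and follows essentially the same route as the paper's proof: both start from the simplified product (\ref{ct-4}), reduce to $\chi=\mu$ via holomorphy of Hecke $L$-functions for nontrivial characters, read off the numerator pole condition $w(i)-2i\in\{n-\alpha-1,n-\alpha\}$, isolate the unique denominator pole at $i=1$ when $\alpha=m=n$ and $m_w=1$, and bound the qualifying indices by $i\le\alpha+1$ via $w(i)\le n+i$. The only point you leave implicit is that the non-pole $L$-factors are also \emph{non-vanishing} at integer arguments (so they neither add nor subtract from the pole count); the paper invokes this explicitly using the listed analytic properties of $L(s,1)$, and it is worth stating once.
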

\begin{proof} We use (\ref{ct-4}). There are two cases. First, we assume that 
$\chi \neq \mu$. In this case all $L$--functions are holomorphic at real $s$. 
Moreover, by (\ref{ct3001}), we have that 
$$
s+ \frac{n-m}{2}+i\ge i\ge 1.
$$
Thus, the denominator is not vanishing. This proves the lemma in this case.

In remainder of the proof we assume that  $\chi=\mu$. 
The denominator is  non--vanishing, and holomorphic unless 
$s=0$, $m=n$, and $i=1$. Since $i\ge m_w$ in (\ref{ct-4}), we must have $m_w=1$ in that 
case. In the case $s=0$, $m=n$, and $m_w=1$, the denominator 
has a simple pole, and it is holomorphic  and non--vanishing otherwise.

We are interested in the poles of  numerator and the  estimate of their  order. 
The poles of numerator are caused by the poles of its factors.
The factor in numerator given by 
\begin{equation}\label{ct3002}
L(s+ \frac{n-m}{2}+2i-w(i), \triv_{\Bbb Q^\times})
\end{equation}
has a pole if and only if 
$$
s+ \frac{n-m}{2}+2i-w(i)\in \{0, 1\}.
$$

Thus, 
$$
w(i)=s+ \frac{n-m}{2}+ 2i-\epsilon, \ \ \epsilon\in \{0, 1\}.
$$
This imposes conditions on $s$.

First, we have that 
\begin{equation}\label{ct-5}
s+ \frac{n-m}{2}\in \Bbb Z,
\end{equation}
and since $w(i+1), \ldots, w(m)>w(i)$, we must have that
$w(i)$ is not too big, i.e., 
$$
n+i\ge w(i)=s+ \frac{n-m}{2}+2i-\epsilon \iff 0\le s \le 
\frac{n+m}{2} -i +\epsilon
$$
This implies that for $s> \frac{n+m}{2}$ no poles occur. Also, 
(\ref{ct-5}) implies that if $\frac{m+n}{2}-s\not \in \Bbb Z$, then 
no poles occur.

The conclusion of above discussion is that we need to
analyze the order of pole for 
$$
s= \frac{m+n}{2}-\alpha, \ \ \alpha\in\Bbb Z, \ 0\le \alpha \le \left[\frac{m+n}{2}
\right].
$$

We reconsider the factor (\ref{ct3002}) of the numerator. We recall that
$m_w\le i \le m$. Also, we have 
$$
s+ \frac{n-m}{2}+2i-w(i)=n+ 2i -w(i) -\alpha.
$$
Thus, (\ref{ct3002}) has a pole if and only if 

\begin{equation}\label{ct3003}
n-\alpha-1 \le w(i) - 2i \le n-\alpha.
\end{equation}
 Assume that there exists at least one $m_w\le i \le m$ such that this holds. Then, for 
all other $m_w\le i' \le m$ the factor
$$
L(s+ \frac{n-m}{2}+2i'-w(i'), \triv_{\Bbb Q^\times})=
L(n+ 2i' -w(i') -\alpha, \triv_{\Bbb Q^\times})\neq 0
$$
since $n+ 2i' -w(i') -\alpha$ is an integer. 
The conclusion of this discussion is that the order of pole of numerator 
is the number of all $m_w\le i \le m$ such that (\ref{ct3003}) is valid.

The function $i\mapsto w(i)-i$ is increasing for $i\in \{1, \ldots, m\}$. Thus, for 
any $i$ satisfying the condition (\ref{ct3003}),
we must have
$$
n-\alpha-1 \le w(i) -2i\le w(m)-m-i\le (m+n)-m-i=n-i \implies i\le \alpha+1.
$$
This implies that
$$
m_w\le i\le \alpha+1\implies m_w\le \alpha+1.
$$
This implies that the order of pole of numerator is 
$$
\le \min{(m, \alpha+ 1)} -m_w+1.
$$
\end{proof}

\vskip .2in
Now, we complete the proof of  Theorem \ref{main-res-1} in the
following easy steps. Put $I_p(s)=\Ind_{P_{m, n}(\Bbb Q_p)}^{GL_{m+n}(\Bbb Q_p)}\left(|\det |_p^{s/2}
 \chi_p \otimes |\det |_p^{-s/2}\mu_p\right)$.

\begin{Lem}\label{proof-main-res-1} Let $w$ be an element of the set
  given by (\ref{ct-1}).  Let $s\in \mathbb R$
  such that $s-(m+n)/2\not\in \Bbb Z$. Then, the local normalized
  intertwining operator $\cal N(\Lambda_{s, p} , \wit{w})$ is
  holomorphic on $\Ind_{T(\Bbb A)U(\Bbb A)}^{GL_{m+n}(\Bbb
  A)}(\Lambda_{s, p})$, and consequently on 
$I_p(s)$,  for all $p\le \infty$.
\end{Lem}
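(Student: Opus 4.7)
The plan is to reduce the assertion to the rank-one case by factoring $\cal N(\Lambda_{s,p}, \wit w)$ along a reduced expression for $w$, and then to exploit the hypothesis $s - (m+n)/2 \notin \Bbb Z$ to steer clear of every reducibility point. First I would fix a reduced decomposition $w = s_{\alpha_{i_1}} \cdots s_{\alpha_{i_r}}$ into simple reflections. The cocycle property of unnormalized intertwining operators combined with the multiplicativity of the normalization factor (both standard, see \cite{Sh1}, \cite{Sh2}) produces
$$
\cal N(\Lambda_{s,p}, \wit w) = \cal N(\Lambda_{s,p}^{(1)}, \wit{s_{\alpha_{i_1}}}) \circ \cdots \circ \cal N(\Lambda_{s,p}^{(r)}, \wit{s_{\alpha_{i_r}}}),
$$
where $\Lambda_{s,p}^{(k)} := s_{\alpha_{i_{k+1}}} \cdots s_{\alpha_{i_r}} \Lambda_{s,p}$. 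The usual reduced-word description of the inversion set of $w$ guarantees that as $k$ varies, the characters $\Lambda_{s,p}^{(k)} \circ \alpha_{i_k}^\vee$ range precisely over the set $\{\Lambda_{s,p} \circ \beta^\vee : \beta \in \Sigma^+,\, w(\beta)<0\}$.

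Next I would invoke the computation preceding Lemma \ref{ct-6}: every such $\beta$ has the shape $\phi_i - \phi_j$ with $1 \le i \le m$ and $m+1 \le j \le m+n$, and
$$
\Lambda_{s,p} \circ \beta^\vee = \chi_p \mu_p^{-1} |\cdot|_p^{s+(m+n)/2+i-j}.
$$
Rewriting the exponent as $(s-(m+n)/2) + (m+n) + (i-j)$ and observing that $(m+n)+(i-j) \in \Bbb Z$, the hypothesis forces it to be a \emph{non-integer} real number $t_\beta$. Hence each rank-one factor $\cal N(\Lambda_{s,p}^{(k)}, \wit{s_{\alpha_{i_k}}})$ is attached to a character of the form $\eta|\cdot|_p^{t}$ with $\eta = \chi_p \mu_p^{-1}$ unitary and $t \in \Bbb R \setminus \Bbb Z$.

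It would then remain to verify that each such rank-one normalized operator is holomorphic. At non-archimedean $p$, the normalization is a ratio of $L$-factors rational in $p^{-t}$ (or constantly $1$ when $\eta$ is ramified), and the quantities $L(0,\eta|\cdot|_p^t)$ and $L(1,\eta|\cdot|_p^t)$ have real poles only at integer values of $t$; away from those points both are finite and non-vanishing, and the singularities of the unnormalized rank-one integral at the integer reducibility points are exactly absorbed by the normalization. The archimedean case is analogous with $\Gamma$-factors in place of the geometric series, whose real poles again occur only at non-positive integer arguments. In either case every rank-one factor is holomorphic at the given $s$, so the composition is holomorphic on the full principal series, and a fortiori on the subspace $I_p(s)$.

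I expect the main obstacle to be the archimedean rank-one step: there one must balance the pole structure of the unnormalized integral operator against the shifted $\Gamma$-factors in the denominator of the normalization. However, once $t_\beta$ is restricted to non-integer values, neither side contributes a singularity, so the cancellation is transparent and the whole composition is manifestly analytic at $s$.
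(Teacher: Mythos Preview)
Your factorization into rank-one operators is exactly what the paper does, and your identification of the relevant exponents $t_\beta=s+(m+n)/2+i-j\in\Bbb R\setminus\Bbb Z$ matches the computation preceding Lemma~\ref{ct-6}. Where you diverge is in the justification that each rank-one normalized operator is holomorphic at such $t_\beta$. You argue directly from the pole structure of the local $L$-factors and of the unnormalized integral; the paper instead observes that the rank-one principal series $\Ind^{GL(2)}(\chi_p|\cdot|_p^{a}\otimes\mu_p|\cdot|_p^{b})$ is \emph{irreducible} whenever $a-b\notin\Bbb Z$, and then invokes the relation $\cal N_2\,\cal N_1=\mathrm{id}$ between the two opposite normalized operators: if one carried a pole, clearing it would yield nonzero operators whose composite annihilates an irreducible module, which is impossible.

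Both routes work. Yours is more hands-on but rests on a fact you do not quite justify, namely that the \emph{unnormalized} rank-one integral itself has no real pole off the integers. (A small slip: its genuine real pole, for trivial $\eta$, sits at $t=0$, which is \emph{not} a reducibility point; your phrase ``singularities at the integer reducibility points'' conflates the two, though harmlessly since $0$ is still an integer.) For $p<\infty$ this holomorphy is immediate from rationality in $p^{-t}$; for $p=\infty$ it requires the explicit $\Gamma$-function action on each $K$-type, which you flag as the expected obstacle but do not actually supply. The paper's irreducibility argument bypasses this entirely: it needs no information about the unnormalized operator beyond the existence of the normalization, and runs uniformly at all places.
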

\begin{proof} The claim is clear if $w$ is identity. Otherwise, 
since $w$ in the set given by (\ref{ct-1}), the
  permutation is a shuffle $1, \ldots, m$ among $m+1, \ldots, m+n$
  without changing the order.  Thus means that $\cal N(\Lambda_{s, p}
  , \wit{w})$ is factorized into the product of certain  rank one
  $GL_2$--operators of the form 
\begin{equation}\label{proof-main-res-2}
\Ind^{GL(2)}\left(\chi_p |\ |_p^{\frac{s-(m-1)}{2}+i}\otimes \mu_p |\
  |_p^{\frac{-s+(n-1)}{2}-j}\right)
\longrightarrow 
\Ind^{GL(2)}\left( \mu_p |\
  |_p^{\frac{-s+(n-1)}{2}-j} \otimes \chi_p |\ |_p^{\frac{s-(m-1)}{2}+i}\right),
\end{equation}
where $0\le i\le m-1$, $0\le j\le n-1$. 

But the induced representation 
$\Ind^{GL(2)}\left(\chi_p |\ |_p^{\frac{s-(m-1)}{2}+i}\otimes \mu_p |\
  |_p^{\frac{-s+(n-1)}{2}-j}\right)$ is irreducible  since 
$s-(m+n)/2\not\in \Bbb Z$. Hence, the rank--one normalized
intertwining operator (\ref{proof-main-res-2}) is holomorphic. This is
a well--known fact, but for reader's convenience we include the sketch
of the proof. Let us call this operator $N_1(s)$. By the normalization
procedure, there is also a normalized operator $N_2(s)$ which goes in
the opposite direction and satisfy $N_2(s) N_1(s)=id$ whenever both
are holomorphic. But if in particular point one of them has a pole,
then we can get rid of the poles getting two non--zero operators $N_1$
and $N_2$ which satisfy $N_2N_1=0$. This is clearly impossible since
the induced representation is irreducible.  
\end{proof}

Going back to the proof of Theorem \ref{main-res-1}, we observe that the Eisenstein
series  given by (\ref{de-1}) is holomorphic for $s > \frac{m+n}{2}$. 
But then its constant term (\ref{de-2}) is also holomorphic in that
region. Then $E_{const}(s,f)-f_s$ is also holomorphic. Next,  
$E_{const}(s,f)\neq 0$ whenever $f\neq 0$ since $f_s$ and
$E_{const}(s,f)-f_s$ are linearly independent because of  Lemma
\ref{ct-2}. Now, we apply Lemma \ref{lem:const0} to complete the proof
of the theorem in this case. 

Let us assume that $s \le  \frac{m+n}{2}$. Then, since by the
assumption of Theorem \ref{main-res-1},  $s\not \in
  \{\frac{m+n}{2}-\alpha; \ \  \alpha\in \Bbb Z,  \ \ 0\le \alpha<
  \frac{m+n}{2} \}$, we see that $s - \frac{m+n}{2}\not\in \mathbb Z$.
Now, Lemmas \ref{ct-6} and \ref{proof-main-res-1} show that
$E_{const}(s,f)$ is holomorphic using the expression (\ref{de-3}). At
this point we can complete the proof of the theorem as before.

\section{Preparation For The Proof Of Theorem \ref{main-res-2}; The
  Circular Result} \label{ct3000}

We start this section with some consequences of Lemma \ref{ct-6}
needed in the proof Theorem \ref{main-res-2} but not in the proof of 
Theorem \ref{main-res-1} completed in the previous section. 

According to Lemma \ref{ct-6}, we make the following definition
(assuming that $\chi=\mu$ and $s=\frac{n+m}{2}-\alpha$ for some integer 
$0\le \alpha\le  \left[\frac{m+n}{2}\right]$).
We denote by 
$W_\alpha$ the set of all elements of (\ref{ct-1}) such that 
$$
w(i)=2i+ n-\alpha-\epsilon_i, \ \ 1\le i \le \min{(m, \alpha+ 1)},
$$
where $\epsilon_i\in\{0, 1\}$. 

We remark that 
$$
w(1)=n+2-\alpha-\epsilon_1\ge 1\iff n\ge \alpha+\epsilon_1-1
$$
which is true since $n\ge m$ (see (\ref{ct3001})). We note that 
 $w(1)=1$ if and only if $\alpha=n$ and $\epsilon_1=1$. This implies $\alpha=m=n$
and $m_w>1$. Also,we have 
$$
w(2)=n+4-\alpha-\epsilon_2\ge 3\iff n\ge \alpha+\epsilon_2-1.
$$
Hence, $w(2)>2$. Thus, $m_w\in \{1, 2\}$.

\begin{Cor}\label{ct-8} Assume that (\ref{ct3001}) holds with 
$s= \frac{m+n}{2}-\alpha$ for some integer $0\le \alpha\le  \left[\frac{m+n}{2}\right]$, and  assume that 
$\chi=\mu$. Then, among all $w$ from the set 
(\ref{ct-1}) the maximal order of pole at $s$ of
$r(\Lambda_{s} , w)^{-1}$
is achieved  if and only if  $w\in W_\alpha$. Moreover, the order of pole is given by
$$
b_\alpha\overset{def}{=}\begin{cases}
\min{(m, \alpha+ 1)}-1 \ \ \text{if $\alpha=m=n$;}\\
\min{(m, \alpha+ 1)} \ \ \text{otherwise}.
\end{cases}
$$
\end{Cor}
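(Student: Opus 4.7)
My plan is to apply Lemma~\ref{ct-6} directly and reconcile its formula for the numerator's pole order with the one possible denominator pole. By that lemma, the numerator of $r(\Lambda_s, w)^{-1}$ has pole order at $s_0 = \frac{m+n}{2} - \alpha$ equal to
\[
N_w := \#\{i \in \{m_w, \ldots, m\} : w(i) - 2i \in \{n - \alpha - 1, n - \alpha\}\},
\]
while the denominator $\prod_{i=m_w}^{m} L(s + \tfrac{n-m}{2} + i, \triv)$ is holomorphic and non-vanishing at $s_0$, except that when $\alpha = m = n$ and $m_w = 1$ the $i = 1$ factor contributes a simple pole. So the total pole order is $N_w - D_w$ with $D_w \in \{0, 1\}$, and I must maximize this over $w$ in the set (\ref{ct-1}) and identify the maximizers.

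The upper bound $N_w \le \min(m, \alpha+1) - m_w + 1$ is already recorded in Lemma~\ref{ct-6}, via $i \le \alpha + 1$ for any contributing $i$. For $w \in W_\alpha$ the defining relation $w(i) - 2i = n - \alpha - \epsilon_i$ lies in $\{n - \alpha - 1, n - \alpha\}$ for every $1 \le i \le \min(m, \alpha+1)$, so the bound is attained: $N_w = \min(m, \alpha+1) - m_w + 1$.

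The argument then splits on whether $\alpha = m = n$. In the ``otherwise'' branch, every $w \in W_\alpha$ has $m_w = 1$, because by the remark preceding the corollary $w(1) = 1$ can occur only when $\alpha = m = n$; hence $D_w = 0$ and $N_w - D_w = \min(m, \alpha + 1) = b_\alpha$. Conversely, a $w$ achieving order $b_\alpha$ must have $m_w = 1$ together with the condition satisfied at every $1 \le i \le \min(m, \alpha + 1)$, which is precisely the definition of $W_\alpha$.

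I expect the main obstacle in the remaining case $\alpha = m = n$, where two distinct mechanisms coexist on $W_\alpha$. For $w \in W_\alpha$ with $\epsilon_1 = 0$ one has $w(1) = 2$, giving $(m_w, N_w, D_w) = (1, m, 1)$; for $\epsilon_1 = 1$ one has $w(1) = 1$, giving $(2, m - 1, 0)$; in both situations the total is $m - 1 = b_\alpha$. To exclude $w \notin W_\alpha$, I would check the two possibilities $m_w \in \{1, 2\}$ by hand: if $m_w = 1$, some $i \in \{1, \ldots, m\}$ violates the condition, so $N_w \le m - 1$ and the denominator cancellation drops the total to at most $m - 2$; if $m_w = 2$, then $w(1) = 1$ automatically satisfies the condition (since $-1 \in \{n-\alpha-1, n-\alpha\}$), so the violating index lies in $\{2, \ldots, m\}$, giving $N_w \le m - 2$ and again total $\le m - 2$. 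The key combinatorial subtlety is that leaving $W_\alpha$ forces a strict loss of at least one unit, regardless of which branch of the ``$W_\alpha$-plateau'' (via $D_w$ or via the shortened range) we started from.
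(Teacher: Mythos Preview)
Your argument is correct and follows the same route as the paper: apply Lemma~\ref{ct-6}, split on whether $\alpha=m=n$, and use the remark that $m_w\in\{1,2\}$ for $w\in W_\alpha$. Your converse in the $\alpha=m=n$ case---checking $m_w=1$ and $m_w=2$ separately and showing that leaving $W_\alpha$ forces a strict drop by at least one---is spelled out more explicitly than in the paper, which simply invokes ``Lemma~\ref{ct-6} implies\ldots'' for that direction.
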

\begin{proof} We consider two cases:

First, we assume that the relation $\alpha=m=n$ is not fulfilled. Then, Lemma \ref{ct-6} implies that maximal order
of pole $r(\Lambda_{s} , w)^{-1}$ for $s= \frac{m+n}{2}-\alpha$ is achieved if and only 
if 
$w\in W_\alpha$ since by above remarks for them $m_w=1$. The order is equal to
$\min{(m, \alpha+ 1)}$.

Assume that $\alpha=m=n$. Then, if $w\in W_\alpha$ with $m_w=1$,
the order of pole $r(\Lambda_{s} , w)^{-1}$ for $s= \frac{m+n}{2}-\alpha$ is 
$\min{(m, \alpha+ 1)}-1$. If $w\in W_\alpha$ with $m_w>1$, then $m_w=2$, and 
the order of pole $\min{(m, \alpha+ 1)}-1$. Finally, Lemma \ref{ct-6} implies 
that maximal order
of pole $r(\Lambda_{s} , w)^{-1}$ for $s= \frac{m+n}{2}-\alpha$ is achieved if and only 
if $w\in W_\alpha$.
\end{proof}

\vskip .2in

\begin{Exm} \label{ct3004} Assume that $\alpha=0$. Then, $W_\alpha$ is singleton consisting of the 
permutation $w$ defined by: $w(m+1)=1, \ldots, w(m+n)=n, w(1)=n+1, \ldots, w(m)=m+n$.
Indeed, the definition of $W_\alpha$, forces on  $w$ the following: 
$$
w(1)=n+2-\epsilon_1\in \{n+1, n+2\}.
$$
We remark that 
$$
\epsilon_{1}=1
$$
since $w$ belongs to the set (\ref{ct-1}). 
\end{Exm}

\vskip .2in
In general,  $W_\alpha$ is never empty. It contains a non--empty subset 
$W^0_\alpha$ of elements given by
$$
w(i)=2i+ n-\alpha-1, \ \ 1\le i < \min{(m, \alpha+ 1)},
$$
i.e.,
$$
\epsilon_i=1, \ \ 1\le i < \min{(m, \alpha+ 1)},
$$
satisfying conditions of (\ref{ct-1}). It is easy to check that there exists elements 
of $W^0_\alpha$ with 
$$
\epsilon_{\min{(m, \alpha+ 1)}}=1.
$$
Indeed, firstly, we have
$$
w(1)=n+1-\alpha\ge 1.
$$
Secondly, we must assure that there is enough room to insert
exactly $m-\min{(m, \alpha+ 1)}$ elements 
$$
w\left(\min{(m, \alpha+ 1)+1}\right), \ldots, w(m)
$$
between 
$$
w\left(\min{(m, \alpha+ 1)}\right)=2\min{(m, \alpha+ 1)}+n-\alpha-1
$$
and (including)
$$
m+n.
$$ 
But 
$$
m+n-\left(2\min{(m, \alpha+ 1)}+n-\alpha-1\right)=
m+\alpha+1-2\min{(m, \alpha+ 1)}\ge m-\min{(m, \alpha+ 1)},$$
which proves what we want. 

\vskip .2in 

We see that in above inequality we have equality if and only if 
$$
\min{(m, \alpha+ 1)}=\alpha+1 \iff m\ge \alpha+1.
$$
In that case, the set $W^0_\alpha$ is singleton in view of 
(\ref{ct-1}). If $m< \alpha+1$, then we may select 
$$
\epsilon_{\min{(m, \alpha+ 1)}}=0.
$$

\vskip .2in 
In any case, for $w\in W^0_\alpha$, the following must hold
\begin{equation}\label{ct3005}
\begin{aligned}
&w(m+1)=1, \ldots,  w(m+n-\alpha)= n-\alpha, \\
& w(1)=n-\alpha+1,  w(m+n-\alpha + 1)=n-\alpha+2,\\
& \ldots\\
& w(i)=n-\alpha+ 2i-1, w(m+n-\alpha + i)=n-\alpha+ 2i,\\
& \ldots \\
& w\left(\min{(m, \alpha+ 1)}\right)=2\min{(m, \alpha+ 1)}+n-\alpha-\epsilon_{\min{(m, \alpha+ 1)}}.
\end{aligned}
\end{equation}
where we must require that $1\le i< \min{(m, \alpha+ 1)}$.

In the next two lemmas we extend the result proved in Lemma \ref{ct-2}.

\begin{Lem}\label{ct-200} Let $s\in \mathbb R$. Assume that $w_1\neq w_2$ are from the set given 
by (\ref{ct-1}). Then, we have that $w_1(\Lambda_s)=w_2(\Lambda_s)$ 
implies the following:
\begin{itemize}
\item[(i)]  $\chi=\mu$,
\item[(ii)] there exists a finite (non--empty) set $I\subset \{1, \ldots, m\}$
such that $\{s+\frac{m+n}{2}+ i; \ i\in I \}$ is a subset of 
$\{m+1, \ldots, m+n\}$, and 
\begin{align*}
&w_2(i)=w_1(s+\frac{m+n}{2}+ i); \ i\in I,\\
&w_2(s+\frac{m+n}{2}+ i)=w_1(i); \ i\in I,\\
&w_2=w_1 \ \text{on} \ \{1, \ldots, m+n\}-\{i, \ s+\frac{m+n}{2}+ i; \ i\in I \}.
\end{align*}
\end{itemize}
\end{Lem}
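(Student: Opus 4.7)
The plan is to reduce the hypothesis $w_1(\Lambda_s)=w_2(\Lambda_s)$ to a statement about the permutation $\tau = w_1^{-1}w_2$ stabilizing $\Lambda_s$ componentwise, and then exploit the block structure of $\Lambda_s$ to pin down $\tau$ explicitly. Write $\Lambda_s$ as a tuple $(\lambda_1,\ldots,\lambda_{m+n})$, where, by (\ref{de-4}),
$$\lambda_i=\chi|\cdot|^{x_i},\quad x_i=\tfrac{s-(m-1)}{2}+(i-1),\quad 1\le i\le m,$$
and $\lambda_{m+k}=\mu|\cdot|^{y_k}$, $y_k=\tfrac{-s-(n-1)}{2}+(k-1)$, for $1\le k\le n$. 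As in the proof of Lemma~\ref{ct-2}, the equation $w_1(\Lambda_s)=w_2(\Lambda_s)$ is equivalent to $\lambda_{\tau(k)}=\lambda_k$ for all $k$, where $\tau:=w_1^{-1}w_2$. Since $w_1\neq w_2$, there exists at least one index $k$ with $\tau(k)\neq k$.

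Next I would use the elementary observation that within the $\chi$-block the exponents $x_1<x_2<\cdots<x_m$ are strictly increasing, and similarly for $y_1<\cdots<y_n$, so the characters $\lambda_i$ are pairwise distinct within each block. Consequently, any nontrivial equality $\lambda_{\tau(k)}=\lambda_k$ with $\tau(k)\ne k$ must cross blocks: say $k\in\{1,\ldots,m\}$ and $\tau(k)=m+l\in\{m+1,\ldots,m+n\}$. The equality $\chi|\cdot|^{x_k}=\mu|\cdot|^{y_l}$ then forces $\chi=\mu$ (giving (i)), and solving $x_k=y_l$ yields
$$l=s+\tfrac{n-m}{2}+k,\qquad \text{i.e.,}\qquad \tau(k)=s+\tfrac{m+n}{2}+k.$$
In particular this integer lies in $\{m+1,\ldots,m+n\}$.

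Finally I would check that $\tau$ is an involution. For $i\in\{1,\ldots,m\}$ with $\tau(i)=m+l$ in the $\mu$-block, the image $\tau(m+l)$ must satisfy $\lambda_{\tau(m+l)}=\lambda_{m+l}=\lambda_i$; by within-block distinctness combined with injectivity of $\tau$, the only option is $\tau(m+l)=i$. So $\tau$ is a product of disjoint transpositions of the form $(i,\,s+\tfrac{m+n}{2}+i)$ for $i$ in some nonempty set $I\subset\{1,\ldots,m\}$, and $\tau$ fixes every other index. Since $w_2=w_1\tau$ and $\tau^2=\mathrm{id}$, the formulas in (ii) follow directly. The only mild subtlety is keeping the convention for the action straight (so that the composition $\tau=w_1^{-1}w_2$ acts on \emph{positions} and hence lands the formulas with the correct ordering of $w_1$ and $w_2$); the rest is routine.
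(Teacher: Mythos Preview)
Your proposal is correct and follows essentially the same approach as the paper: both reduce $w_1(\Lambda_s)=w_2(\Lambda_s)$ to a permutation (you use $\tau=w_1^{-1}w_2$, the paper uses $w_2^{-1}w_1$) fixing $\Lambda_s$ componentwise, then exploit within-block distinctness to force any non-fixed index to cross blocks, yielding $\chi=\mu$ and the shift $i\mapsto s+\tfrac{m+n}{2}+i$, and finally establish that the permutation is an involution. Your injectivity argument for the involution step is the same in substance as the paper's contradiction argument in its final paragraph.
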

\begin{proof} We have $w^{-1}_1w_2(\Lambda_s)=\Lambda_s$.
The remark from the proof of Lemma \ref{ct-2} shows that in the notation 
given by (\ref{de-4}) the character $w^{-1}_1w_2(\Lambda_s)$ is obtained by 
applying $w^{-1}_2w_1$--permutation to the tensor components of 
$\Lambda_s$. This observation gives the following.

Let $i\in\{1, \ldots, m\}$. Then, the $i$--th component of 
$\Lambda_s$ and $w^{-1}_1w_2(\Lambda_s)$ are the same. Let $j=w^{-1}_2w_1(i)$.
There are two cases. 

Firstly, $j\in\{1, \ldots, m\}$, and 
$
\chi |\ |^{\frac{s-(m-1)}{2}+i-1}=\chi |\ |^{\frac{s-(m-1)}{2}+j-1}$.
This implies $i=j$.

Secondly, $j\in\{m+1, \ldots, m+n\}$, and 
$\chi |\ |^{\frac{s-(m-1)}{2}+i-1}=\mu |\ |^{\frac{-s-(n-1)}{2}+j-m-1}$.
This implies $\chi=\mu$, and $j=s+(m+n)/2+i$.

This completely describes the value of $w^{-1}_2w_1(i)$ for $i\in\{1, \ldots, m\}$.
For  the determination of the value of $w^{-1}_2w_1$ on the set $\{m+1, \ldots, m+n\}$, it 
is more convenient to write $j$ instead of $i$. So, let $j\in \{m+1, \ldots, m+n\}$. 
Repeating above argument, we conclude either $w^{-1}_2w_1(j)=j$ or $w^{-1}_2w_1(j)\in 
\{1, \ldots, m\}$.
In the latter case  $\chi=\mu$ and $j=s+\frac{m+n}{2}+ i$, where we write 
$i=w^{-1}_2w_1(j)$. We readily see that $i=w^{-1}_2w_1(j)$ implies $j=w^{-1}_2w_1(i)$. If not, the the first part 
of the proof shows that $w^{-1}_2w_1(i)=i$ or $w_1(i)=w_2(i)$. On the other hand,  
$i=w^{-1}_2w_1(j)$ implies $w_2(i)=w_1(j)$. So, we get $w_1(i)=w_2(i)=w_1(j)$ which implies $i=j$. This is a 
contradiction. 
\end{proof}

\vskip .2in 
\begin{Lem} \label{ct-11}
Assume that (\ref{ct3001}) holds with 
$s= \frac{m+n}{2}-\alpha$ for some integer $0\le \alpha\le  \left[\frac{m+n}{2}\right]$, and  assume that 
$\chi=\mu$. Then, if $w\in W^0_\alpha$, then we denote by $J_w$ the set of all permutations obtained 
from  $w$ in the following way:  
for each finite (possibly empty) set $I\subset \{1, \ldots, \min{(m, \alpha+1)}-1\}$ we define
a permutation $w_I$ in the following way:
\begin{align*}
&w_I(i)=w(m+n-\alpha+ i); \ i\in I,\\
&w_I(m+n-\alpha+ i)=w(i); \ i\in I,\\
&w_I=w \ \text{on} \ \{1, \ldots, m+n\}-\{i, \ m+n-\alpha+ i; \ i\in I \}.
\end{align*}
Then, we have the following:
\begin{itemize}
\item[(i)]If $w\in W^0_\alpha$, then $w_I\in W_\alpha$;
\item[(ii)]If $w\in W^0_\alpha$, then $w_I(\Lambda_{s})=w(\Lambda_{s})$;
\item[(iii)] $W_\alpha$ is a disjoint union of the sets $J_w$, where  $w\in W^0_\alpha$.
\end{itemize}
\end{Lem}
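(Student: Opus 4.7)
The proof rests on the explicit form (\ref{ct3005}): for every $w \in W^0_\alpha$, the two positions $i$ and $m+n-\alpha+i$ carry the consecutive values $n-\alpha+2i-1$ and $n-\alpha+2i$ respectively, for each $1 \le i < M$, where I write $M = \min{(m, \alpha+1)}$. For (i), I would start from this: $w_I$ simply transposes the two values inside the pair $\{n-\alpha+2i-1,\,n-\alpha+2i\}$ at each $i \in I$. The first--block and second--block sequences $w_I(1)<\cdots<w_I(m)$ and $w_I(m+1)<\cdots<w_I(m+n)$ remain strictly increasing since the pair values are themselves consecutive integers, and they match safely with the unchanged positions $M,\ldots,m$ and $m+n-\alpha+M,\ldots,m+n$: any swapped value is at most $n-\alpha+2M-2$, whereas $w(M) \ge n-\alpha+2M-1$ and $w(m+n-\alpha+M) \ge n-\alpha+2M-1$ by the shuffle property of $w$. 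One then reads off $w_I(i)=2i+n-\alpha-\epsilon_i$ with $\epsilon_i=0$ for $i\in I$, $\epsilon_i=1$ for $i\in\{1,\ldots,M-1\}\setminus I$, and $\epsilon_M$ inherited from $w$, which places $w_I$ in $W_\alpha$.

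For (ii), substituting $s=(m+n)/2-\alpha$ and $\chi=\mu$ directly into (\ref{de-4}) shows that the $i$-th tensor factor of $\Lambda_s$ coincides with the $(m+n-\alpha+i)$-th one for every $1\le i<M$; both exponents simplify to $\tfrac{1}{2}\bigl(\tfrac{n-m}{2}-\alpha+2i-1\bigr)$. Transposing equal components leaves the induced character unchanged, so $w_I(\Lambda_s)=w(\Lambda_s)$. Equivalently this is a direct instance of Lemma \ref{ct-200} with swap set $I$.

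For (iii) the decisive step is the identity
$$
w'(m+n-\alpha+i)=n-\alpha+2i-1+\epsilon'_i,\qquad 1\le i<M,
$$
valid for every $w'\in W_\alpha$. This is forced by the shuffle structure: $\{w'(1),\ldots,w'(m)\}$ selects exactly one element of each pair $\{n-\alpha+2j-1,\,n-\alpha+2j\}$ for $1\le j\le M$, while $\{w'(m+1),\ldots,w'(m+n)\}$ starts with $1,2,\ldots,n-\alpha$ and then enumerates in increasing order the complementary elements of those pairs. Setting $I=\{i:1\le i<M,\ \epsilon'_i=0\}$ and undoing the corresponding swaps recovers a unique $w\in W^0_\alpha$ with $w_I=w'$; this simultaneously gives the covering $W_\alpha=\bigcup_{w\in W^0_\alpha}J_w$. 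For disjointness, the $\epsilon$-pattern from (i) uniquely determines $I$ from $w_I$, after which the equation $w_I=\tilde w_{\tilde I}$ forces $w=\tilde w$ at every position (the swapped positions automatically carry the canonical values $n-\alpha+2i-1$ and $n-\alpha+2i$ in $W^0_\alpha$). The main obstacle is this shuffle identification of $w'(m+n-\alpha+i)$, which is not part of the formal definition of $W_\alpha$ and must be extracted from the pair structure; everything else is straightforward bookkeeping.
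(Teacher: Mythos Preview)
Your argument is correct and follows the same route as the paper: use the explicit values in (\ref{ct3005}) for (i), the equality of the $i$-th and $(m+n-\alpha+i)$-th components of $\Lambda_s$ (equivalently Lemma~\ref{ct-200}) for (ii), and the shuffle structure for (iii). The paper's own proof is much terser---it declares (i) and (iii) ``obvious'' and cites Lemma~\ref{ct-200} for (ii)---so what you have supplied, in particular the identity $w'(m+n-\alpha+i)=n-\alpha+2i-1+\epsilon'_i$ and the resulting bijection $w'\leftrightarrow(w,I)$, is exactly the bookkeeping the paper leaves implicit.
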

\begin{proof}   Using (\ref{ct3005}), we obtain 
\begin{align*}
&w_I(i)=w(m+n-\alpha+ i)=2i+n-\alpha; \ i\in I,\\
&w_I(m+n-\alpha+ i)=w(i)=2i+n-\alpha-1; \ i\in I,\\
&w_I=w \ \text{on} \ \{1, \ldots, m+n\}-\{i, \ m+n-\alpha+ i; \ i\in I \}.
\end{align*}
From this and (\ref{ct3005}), (i) is obvious. We remark that (ii) follows 
from Lemma  \ref{ct-200}. 
(iii) is obvious. 
\end{proof}

In the rest of the section, we explain in more detail the structure of the
set $I$ as above, in view of $w_1$ and $w_2$ belonging to the set
given by  (\ref{ct-1}). To that end, for $w$ in the set (\ref{ct-1}),
we call the {\bf{orbit}} of $w$ the set of all $w'$ from (\ref{ct-1})
such that $w(\Lambda_s)=w'(\Lambda_s)$ (of course, the orbit of an
element $w$, as we
saw from Lemma \ref{ct-200}, depends on $s$).

In the next two subsections, we describe the orbits  for other
elements of the set (\ref{ct-1}), not only for the ones belonging to
$W_{\alpha},$ as above. In the first subsection we prove that $I$
defined above is always a specific union of intervals we fully describe. In the second
subsection
we calculate the sum of inverses of normalization factors along
the orbits. The result is that this sum is always either holomorphic or
it has a pole of the first order.

For simplicity, we write $1$ instead  of $\triv_{\Bbb Q^\times}$  for
the trivial gr\" ossen character (a notation introduced in the
previous section).

\subsection {Orbits and poles for a general element $w$ belonging to
  the set   (\ref{ct-1})}
We continue to assume $\chi=\mu,$ $s=\frac{m+n}{2}-\alpha,$ where
$\alpha$ for some integer, $0\le \alpha\le [\frac{m+n}{2}].$  
To each element $w$
of the set   (\ref{ct-1}) we attach a set of
$\{(i_j,k_j):j=1,\ldots,k\}.$ This set describes all the possible sets
$I$ from Lemma \ref{ct-200} for which $w_I$ can be formed as to belong
to the set   (\ref{ct-1}) (and we call the elements in $I$
{\bf{admissible changes}}).  The set $\{(i_j,k_j):j=1,\ldots,k\}$ is
formed  in the following way. Let $i_1$ be the smallest element of
$\{m_w,m_w+1,\ldots, m\}$ which can be the starting point of some set
$I$ from which can make a change from $w$ to another element of
(\ref{ct-1}) in a way described in Lemma \ref{ct-200}.
A number $k_1$ denotes the shortest length of the
interval, say $I_1,$  starting from $i_1$ and consisting of
consecutive integers (i.e., consisting of $i_1,i_1+1, \ldots,
i_1+k_1-1$) which is admissible for a change (cf. Lemma \ref{ct-200}). 
Analogously we define intervals $I_2,\ldots, I_k.$ Let $i_2$ be the 
beginning of the next such interval $(i_2>i_1$). Then, $i_2>i_1+k_1-1,$ in other words, (and more generally)
\begin{Lem} For intervals $I_i$ and $I_j$ as above, we have $I_i\cap I_j=\emptyset,$ if $i\neq j.$
\end{Lem}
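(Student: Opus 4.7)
The plan is to argue by contradiction, exploiting the minimality built into the definition of $k_i$. Suppose for some $i<j$ the intervals $I_i$ and $I_j$ meet. Since $i_j>i_i$ by construction, nonempty intersection forces $i_i<i_j\le i_i+k_i-1$, so the truncation $I'=[i_i,i_j-1]$ is a nonempty set of consecutive integers starting at $i_i$ with length $i_j-i_i<k_i$. If I can show that $I'$ is itself an admissible set of changes for $w$, this contradicts the minimality of $k_i$ and proves the lemma.

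To see this, I first unpack what admissibility of an interval $I=[a,b]$ means at the level of $w$. Applying the swap $w\mapsto w_I$ of Lemma \ref{ct-200} only alters the values of $w$ at the positions $\{a,\ldots,b\}$ and $\{m+n-\alpha+a,\ldots,m+n-\alpha+b\}$, and since $w$ is increasing on each of the two blocks $\{1,\ldots,m\}$ and $\{m+1,\ldots,m+n\}$, the sequence $w_I$ is automatically monotone in the interior of each of these ranges. Consequently, $w_I$ lies in the set (\ref{ct-1}) exactly when the four boundary inequalities
\begin{align*}
&w(a-1)<w(m+n-\alpha+a),\qquad w(m+n-\alpha+b)<w(b+1),\\
&w(m+n-\alpha+a-1)<w(a),\qquad w(b)<w(m+n-\alpha+b+1)
\end{align*}
hold (with the convention that any inequality involving an out-of-range index is vacuous). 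The left-boundary inequalities for $I'=[i_i,i_j-1]$ are verbatim those of $I_i$, hence hold; the right-boundary inequalities for $I'$, namely $w(m+n-\alpha+i_j-1)<w(i_j)$ and $w(i_j-1)<w(m+n-\alpha+i_j)$, coincide with the left-boundary inequalities of $I_j$ at position $i_j$, and so also hold since $I_j$ is admissible. Thus $I'$ is admissible, yielding the required contradiction.

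The main obstacle in a fully written proof is the careful bookkeeping of the four boundary inequalities and of the edge cases where some of them become vacuous (e.g.\ when $a=m_w$, $b=m$, or the corresponding translated indices touch the boundary of $\{m+1,\ldots,m+n\}$). Once this admissibility dictionary is in place, the matching of the right boundary of $I'$ with the left boundary of $I_j$ is a mechanical check, and the minimality of $k_i$ delivers the contradiction. It is worth noting that this argument uses only that $i_j$ is a \emph{starting point} of some admissible interval, not the specific length $k_j$ of $I_j$.
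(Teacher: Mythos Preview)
Your proof is correct and follows essentially the same approach as the paper. The paper states the minimality of $k_1$ as the failure, for each $0\le j\le k_1-2$, of at least one of the two right-boundary inequalities at position $i_1+j$, and then derives a contradiction from the left-boundary inequalities at the starting point $i_2$; you phrase the same thing positively by showing the truncated interval $I'=[i_i,i_j-1]$ is admissible and invoking minimality directly. The underlying logic---matching the right boundary of $I'$ with the left boundary of $I_j$---is identical.
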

\begin{proof}
 The conditions that $I_1$ is an admissible interval for change, in the sense of Lemma \ref{ct-200}, are the following:
\begin{multline}
\label{eq:condition1}
w(i_1-1)<w(m+n-\alpha+i_1)<w(m+n-\alpha+i_1+1)<\cdots 
\\<w(m+n-\alpha+i_1+k_1-1)<w(i_1+k_1),
\end{multline}
\begin{equation}
\label{eq:condition2}
w(m+n-\alpha+i_1-1)<w(i_1)<w(i_1+1)<\cdots <w(i_1+k_1-1)<w(m+n-\alpha+i_1+k_1).
\end{equation}
 If $i_1=1$ or $i_1+k_1=m+1$ or $m+n-\alpha+i_1+k_1=m+n+1,$ we just
 drop the corresponding conditions. 
Note that if $n>\alpha$ we automatically have that $m+n-\alpha+i_1-1\ge m+1.$
If $k_1=1,$ it is obvious that $i_2>i_1+k_1-1.$ Otherwise, the
 condition that $I_1$ is of the minimal length says 
that for each $0\le j\le k_1-2$ we have 
\begin{equation}
\label{eq:fail}
w(m+n-\alpha+i_1+j)>w(i_1+j+1) \text{ or } w(i_1+j)>w(m+n-\alpha+i_1+j+1).
\end{equation}
Since we can start an interval of change from $i_2,$ we have
$w(i_2-1)<w(m+n-\alpha+i_2) $ and 
$w(m+n-\alpha+i_2-1)<w(i_2).$ If we assume that $i_2\le i_1+k-1,$ then
by taking $i_1+j=i_2-1$  we get a  
contradiction with the previous claim.
\end{proof}
From this, we conclude that each set $I,$ admissible in the sense of
Lemma \ref{ct-200}, is the union of some set of intervals
$I_1,\ldots, I_k.$ Assume now that we have picked some
$I_j,\,j=1,\ldots, k$ (for our fixed $w$) and now we 
compare the values of $w$ and $w_{I_j}.$ For the simplicity of
notation, we assume $j=1$; this does not reduce the generality of our
considerations in the next two lemmas.
We either have $w(i_1)<w(m+n-\alpha+i_1)$ or the other way
round. Assume that the first possibility occurs. 
Then, from (\ref{eq:condition1}) and (\ref{eq:condition2}) it follows
that we can write down the following numbers as 
consecutive integers
\begin{equation}\label{possibility1}
\begin{aligned}
& w(i_1),\ w(i_1+1), \ \ldots, \ w(i_1+t_1'),\\
&  w(m+n-\alpha+i_1), \ \ldots, \ w(m+n-\alpha+i_1+t_1''), \\ 
& w(i_1+t_1'+1), \ldots, \ w(i_1+t_2'), \ \ldots, \\
& w(i_1+t_{l-1}'+1), \ \ldots, \ w(i_1+t_l'), \
w(m+n-\alpha+i_1+t_{l-1}''+1), \\
&  \ldots, \  w(m+n-\alpha+i_1+t_l''),
\end{aligned}
\end{equation}
where $0\le t_1'<t_2'<\cdots<t_l'=k_1-1,\;0\le t_1''<t_2''<\cdots<t_l''=k_1-1,$
or 
\begin{equation}
\label{possibility2}
\begin{aligned}
&w(i_1),  \ w(i_1+1), \ \ldots,  w(i_1+t_1'),\\
& w(m+n-\alpha+i_1), \ \ldots, \ w(m+n-\alpha+i_1+t_1''), \\ 
& w(i_1+t_1'+1), \ \ldots, \  w(i_1+t_2'), \ \ldots \\
& w(i_1+t_{l-1}'+1), \ \ldots, \  w(i_1+t_l'), \
w(m+n-\alpha+i_1+t_{l-1}''+1), \\
& \ldots,  w(m+n-\alpha+i_1+t_l''), w(i_1+t_l'+1), \ \ldots, w(i_1+t_{l+1}'),
\end{aligned}
\end{equation}
where, again,  $0\le t_1'<t_2'<\cdots<t_l'<t_{l+1}'=k_1-1,\;0\le t_1''<t_2''<\cdots<t_l''=k_1-1.$

Since the interval $I_1$ is the shortest in the above sense, if we plug
in (\ref{eq:fail}) $j=t_1',$ 
we get $w(m+n-\alpha+i_1+t_1')>w(i_1+t_1'+1)$ or
$w(i_1+t_1')>w(m+n-\alpha+i_1+t_1'+1).$ The second possibility, in 
view of (\ref{possibility1}) and  (\ref{possibility2}) is obviously
impossible, so we get $t_1'\ge t_1''+1,$ i.e., 
$t_1'>t_1''.$ If we continue in the same way, by plugging $t_2',$ and
what we have just obtained, we get $t_2'>t_2''$ 
and so on, until we get $t_{l-1}'>t_{l-1}''.$  On the other hand, if
we assume that we are in the case 
(\ref{possibility2}) we plug $j=t_l'<k_1-1$ and obtain that $t_l'+1\le
t_{l-1}'',$ which is impossible since 
$t_{l-1}''<t_{l-1}'<t_l'.$ We get that the second possibility cannot occur.
We have just  proved the following lemma:
 
\begin{Lem}
\label{lem:t} Assume that $I_1=\{i_1,\ldots,i_1+k_1-1\}$ is
admissible interval in the above sense 
(the shortest one starting in $i_1$). Then, if we assume  that
$w(i_1)<w(m+n-\alpha+i_1),$ the possibility 
(\ref{possibility1}) occurs, with
$t_i''<t_i',\;i=1,2,\ldots, l-1$ and $t_l''=t_l'=k_1-1.$
\end{Lem}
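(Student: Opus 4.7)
The plan is to extract the conclusion from the bookkeeping set up above, proceeding by induction on $i$ from $1$ to $l-1$. Under the hypothesis $w(i_1) < w(m+n-\alpha+i_1)$, the combined values $\{w(i_1+j), w(m+n-\alpha+i_1+j) : 0 \le j \le k_1-1\}$ form a block of consecutive integers, and both chains being strictly increasing (by (\ref{eq:condition1}) and (\ref{eq:condition2})) forces an alternating lower/upper ``run'' pattern starting with a lower run, producing exactly shape (\ref{possibility1}) or (\ref{possibility2}).

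For the inductive step at $i < l$, assume $t_j'' < t_j'$ for all $j < i$. I would apply (\ref{eq:fail}) at $j = t_i'$. The second alternative $w(i_1+t_i') > w(m+n-\alpha+i_1+t_i'+1)$ fails: the inductive hypothesis gives $t_i' > t_{i-1}' > t_{i-1}''$, so $t_i'+1 > t_{i-1}''$, meaning $w(m+n-\alpha+i_1+t_i'+1)$ sits in an upper run appearing after the end $w(i_1+t_i')$ of the $i$-th lower run in the combined sequence. Hence the first alternative $w(m+n-\alpha+i_1+t_i') > w(i_1+t_i'+1)$ must hold. Since the $i$-th upper run ends with $w(m+n-\alpha+i_1+t_i'') < w(i_1+t_i'+1)$ (the latter being the start of the $(i+1)$-th lower run), we obtain $t_i' > t_i''$.

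To exclude shape (\ref{possibility2}) and conclude $t_l' = t_l'' = k_1-1$, I would apply (\ref{eq:fail}) once more at $j = t_l'$, which is legal in (\ref{possibility2}) since $t_l' < t_{l+1}' = k_1-1$. The same reasoning forces the first alternative, giving $t_l' > t_l''$; but (\ref{possibility2}) has $t_l'' = k_1-1 > t_l'$, a contradiction. The main obstacle is purely bookkeeping: keeping careful track of which run each of the four values appearing in (\ref{eq:fail}) belongs to, so that the inductive hypothesis can be cleanly applied at each step and the correct alternative is blocked.
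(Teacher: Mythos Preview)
Your proposal is correct and follows essentially the same approach as the paper: induct on $i$ by plugging $j=t_i'$ into (\ref{eq:fail}), rule out one alternative to force the other and extract $t_i'>t_i''$, then plug $j=t_l'$ to kill (\ref{possibility2}). The only cosmetic difference is that in the final step you rule out the second alternative and derive $t_l'>t_l''$ (contradicting $t_l''=k_1-1>t_l'$), whereas the paper rules out the first and derives $t_l'+1\le t_{l-1}''$; both are the same contradiction read from opposite ends. One small remark: your inductive justification ``$t_i'>t_{i-1}'>t_{i-1}''$'' is vacuous at $i=1$, but there the second alternative dies for free since every upper value exceeds the end of the first lower run by the assumption $w(i_1)<w(m+n-\alpha+i_1)$.
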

Now we calculate the number of poles of $r(\Lambda_s,w)^{-1}$ and of
$r(\Lambda_s,w_{I_1})^{-1},$ but only the contribution to that expressions
(cf. (\ref{ct-4})) coming from $i\in I_1.$ Note that $i_1\ge m_w$ (unless
$m=n=\alpha$ and $i_1=1,$ cf. (\ref{eq:condition2}); we exclude this
case just now) and that $m_w=m_{w_{I_1}}.$ Also, since we assume that
$\chi=\mu,$ there is no epsilon factors in (\ref{ct-4})-i.e., there is
no $(\ast)$ part.

\begin{Lem}
\label{lem:poles}
Assume that $n>\alpha$ and $t\rightarrow 0$. Then, the product of the factors in the
expression for $r(\Lambda_s,w)^{-1}$ (cf. (\ref{ct-4})) coming from
$i\in I_1$ has a pole of the first order obtained for $i=i_1$; in that
case the expression $L( s+t+\frac{n-m}{2} +2i_1-w(i_1),1)$ becomes
$L(t+1,1).$ The product of the factors in the expression for
$r(\Lambda_s,w_{I_1})^{-1}$ coming from $i\in I_1$ has only one pole
of the first order, 
obtained for $i_1+k_1-1;$ in that case, the expression for 
$L( s+t+\frac{n-m}{2} +2(i_1+k_1-1)-w_I(i_1+k_1-1),1)$ becomes $L(t,1).$
\end{Lem}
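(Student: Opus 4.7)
The plan is to combine Lemma~\ref{lem:t}'s block description with an elementary counting argument. Since $n>\alpha$ and $I_1\subseteq\{1,\ldots,m\}$, every denominator factor $L(s+t+\frac{n-m}{2}+i,1)=L(n-\alpha+i+t,1)$ with $i\in I_1$ has argument $\ge 2$ at $t=0$; such factors are holomorphic and non-vanishing and contribute nothing. The pole structure of the $I_1$-product is therefore determined by the $k_1$ numerator factors $L(n-\alpha+2(i_1+j)-w(i_1+j)+t,1)$, $0\le j\le k_1-1$, and their $w_{I_1}$-analogues; recall that $L(\cdot,1)$ has simple poles exactly at arguments $0$ and $1$.

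By Lemma~\ref{lem:t}, working in the case $w(i_1)<w(m+n-\alpha+i_1)$ of (\ref{possibility1}), the $2k_1$ values $w(i_1+j)$ and $w(m+n-\alpha+i_1+j)$, $0\le j\le k_1-1$, form consecutive integers $N,N+1,\ldots,N+2k_1-1$ with $N=w(i_1)$. A short counting argument pins down $N=2i_1+n-\alpha-1$: since $w$ is increasing on each of $\{1,\ldots,m\}$ and $\{m+1,\ldots,m+n\}$, and the values $w(i_1+j)$ for $j\in[0,t_1']$ already fill positions $N,N+1,\ldots,N+t_1'$ in the consecutive sequence, the chains (\ref{eq:condition1})--(\ref{eq:condition2}) imply that the preimage under $w$ of $\{1,\ldots,N-1\}$ is precisely $\{1,\ldots,i_1-1\}\cup\{m+1,\ldots,m+n-\alpha+i_1-1\}$. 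Comparing cardinalities gives $N-1=(i_1-1)+(n-\alpha+i_1-1)$, hence $N=2i_1+n-\alpha-1$ and $c:=n-\alpha+2i_1-N=1$.

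Writing $w(i_1+j)=N+p_1(j)$ and $w(m+n-\alpha+i_1+j)=N+p_2(j)$, and reading off (\ref{possibility1}) with the convention $t_0'=t_0''=-1$, one finds on the $s$-th block
\[
p_1(j)=t_{s-1}''+1+j \ (j\in[t_{s-1}'+1,t_s']), \qquad p_2(j)=t_s'+1+j \ (j\in[t_{s-1}''+1,t_s'']).
\]
Setting $d(j):=2j-p_1(j)=j-t_{s-1}''-1$ and $e(j):=2j-p_2(j)=j-t_s'-1$, and invoking $t_{s-1}''<t_{s-1}'$ from Lemma~\ref{lem:t} (for $s\ge 2$), one checks that $d$ attains its strict minimum $0$ at $j=0$ with $d(j)\ge 1$ elsewhere, while $e$ attains its strict maximum $-1$ at $j=k_1-1$ with $e(j)\le -2$ elsewhere.

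The $j$-th numerator factor reads $L(1+d(j)+t,1)$ for $w$ and $L(1+e(j)+t,1)$ for $w_{I_1}$. The pole condition (argument $\in\{0,1\}$ at $t=0$) forces $d(j)=0$, uniquely realized at $j=0$, producing the factor $L(1+t,1)$ for $w$; and forces $e(j)=-1$, uniquely realized at $j=k_1-1$, producing $L(t,1)$ for $w_{I_1}$. All remaining numerator factors are holomorphic and non-zero at $t=0$, completing the proof. The main technical step is the counting argument yielding $c=1$; once this is in hand the rest is a direct unfolding of Lemma~\ref{lem:t}.
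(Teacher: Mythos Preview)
Your proof is correct and follows essentially the same approach as the paper: both proceed subinterval by subinterval, using the key inequality $t_i''<t_i'$ from Lemma~\ref{lem:t} to rule out poles away from the endpoints. The only cosmetic difference is that the paper extracts the value $w(i_1)=2i_1+n-\alpha-1$ via the $j_i$ formalism of Section~\ref{ct} (observing $j_{i_1+r}=m+n-\alpha+i_1-1$ on the first subinterval and invoking $j_i=w(i)-i+m$), whereas you obtain it by the direct preimage count; these are equivalent bookkeeping devices.
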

\begin{proof} First, we calculate the possible poles for $w,$ (sub)interval, by
(sub)interval. In the first subinterval, we look at $w(i_1+r),\;r\in
[0,t_1'].$ 
We note (cf. the definition of $j_i$ in the third section) that
$j_{i_1+r}=m+n-\alpha+i_1-1;$ we know that 
in $w(i_1+r) $ we have a pole if
$w_{i_1+r}=2(i_1+r)+n-\alpha-\epsilon_{i_1+r,1}.$ Now from expression
for 
$j_{i_1+r}$ (\ref{ct-403}) we get the (only) pole for $r=0$. Then
$\epsilon_{i_1}=1$, and if we write 
$s'=s+t,\;t\to 0,\;s=\frac{m+n}{2}-\alpha,$ then the contribution to the pole in (\ref{ct-4}) is $L(t+1,1).$
Now we look at the possible poles in the subsequent intervals, say, in
the $i+1$-th interval ($1\le i\le  l-1$) (so we examine
$w(i_1+t_i'+1),\ldots, w(i_1+t_{i+1}')$). Here we have
$j_{i_1+r}=m+n-\alpha+i_1+t_i'',$ so that 
$w(i_1+r)-i_1-r+m=m+n-\alpha+i_1+t_i'',\;r\in [t_i'+1,t_{i+1}'].$ We
get possible poles if 
$t_i''+\epsilon_{i_1+r}=r\ge t_i'+1.$ But, according to Lemma \ref{lem:t}, this is impossible ($i\le l-1$).

Second, we calculate possible poles for $w_{I_1}.$  As in the first case, we get that we do not have
any poles, except on the 
last (sub)interval, where we have one pole,  obtained for $i_1+k_1-1;$
then  $\epsilon_{i_1+k_1-1}^I=0,$ 
so the contribution (in poles) is given by the function $L(t,1)$ (where $t\to 0,$ as above).
\end{proof}

\subsection {The sums $\sum_{w'\in [w]}r(\Lambda_s,w')^{-1}$ }
Firstly, we elaborate the case $w\in W_{\alpha}.$  Although the general
case also covers this case, it is methodologically easier to deal with
this case first. So,  we continue to assume $\chi=\mu,$
$s=\frac{m+n}{2}-\alpha,$ where $\alpha$ for some integer, $0\le
\alpha\le [\frac{m+n}{2}].$

For $t$ near zero, we introduce $\gamma(t)=L(t,1)+L(-t,1).$ Using the
Laurent expansion of 
$L(t,1)=\frac{c_1}{t}+c_0+c_1t+c_2t^2+\cdots$ near zero, we get
$\gamma(t)=2\sum_{k=0}^{\infty}c_{2k}t^{2k},$ 
so that $\gamma(t)$ is holomorphic and non-zero (in some neighborhood of zero).
We denote $A(t)=\prod_{i=1}^mL(n-\alpha+t+i,1),$ which is holomorphic
and non-zero function in some neighborhood of 
zero if $n>\alpha$ and $A_1(t)=\frac{A(t)}{L(n-\alpha+t+1,1)}.$ Also
denote $B(t)=\prod_{i=2}^{m-\alpha}L(i+t,1)$ 
(if $m-\alpha\ge 2$).

\begin{Lem} 
\label{lem:norm_factors_maximal}
For $t$ in the sufficiently small neighborhood of zero,  $\sum_{w\in W_{\alpha}}r(\Lambda_{s+t},w)^{-1}$ equals:
\begin{enumerate}
\item $\frac{\gamma(t)^m}{A(t)},$ if  $m<\alpha+1\le n.$
\item  $\frac{B(t)}{A(t)}L(t+1,1)\gamma(t)^{\alpha}$ if  $\alpha+1\le m\le n.$
\item $\frac{\gamma(t)^{m-1}}{A_1(t)}(1+\frac{L(t,1)}{L(-t,1)})$ if $m=n=\alpha.$
\end{enumerate}
\end{Lem}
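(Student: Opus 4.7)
The plan is to substitute $w\in W_\alpha$ directly into the product formula (\ref{ct-4}) at $s=\frac{m+n}{2}-\alpha+t$. Since $\chi=\mu$ the $(\ast)$ factor disappears. For each $i\le \min(m,\alpha+1)$, the defining relation $w(i)=2i+n-\alpha-\epsilon_i$ gives $2i-w(i)=\alpha-n+\epsilon_i$, so the $i$-th numerator factor is $L(t+\epsilon_i,1)$, while the $i$-th denominator factor is always $L(t+n-\alpha+i,1)$. Thus the full denominator assembles into $A(t)$ (respectively $L(t+1,1)A_1(t)$ when $n=\alpha$), independently of $\epsilon$.

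The core combinatorial step is to identify, in each of the three cases, which tuples $(\epsilon_1,\dots,\epsilon_{\min(m,\alpha+1)})\in\{0,1\}^{\min(m,\alpha+1)}$ actually give a $w$ in the set (\ref{ct-1}), and to compute $w(i)$ for $i>\alpha+1$ when such $i$ exist. A short case analysis based on the inequality $w(i)\le m+n$ and the shuffle constraint $w(\alpha+1)<w(\alpha+2)<\cdots<w(m)$ yields: in Case (1) all $2^m$ choices of $\epsilon$ are admissible; in Case (2) the value $\epsilon_{\alpha+1}=1$ is forced (there is no room for $w(\alpha+2),\dots,w(m)$ above $w(\alpha+1)=n+\alpha+2$ inside $\{1,\dots,m+n\}$), and this determines $w(i)=n+i$ for $\alpha+2\le i\le m$, whose contribution to the numerator is $\prod_{i=\alpha+2}^{m}L(i-\alpha+t,1)=B(t)$; in Case (3) all $\epsilon_i$ are admissible, but those with $\epsilon_1=1$ satisfy $w(1)=1$ and hence have $m_w=2$, so the product in (\ref{ct-4}) starts at $i=2$.

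Summing over the admissible $\epsilon$ is then straightforward via the functional equation $L(t+1,1)=L(-t,1)$ for the completed zeta function, which gives the identity $L(t,1)+L(t+1,1)=\gamma(t)$. In Case (1) each of the $m$ coordinates $\epsilon_i$ contributes a factor $\gamma(t)$, producing $\gamma(t)^m/A(t)$. In Case (2) the forced coordinate $\epsilon_{\alpha+1}=1$ contributes $L(t+1,1)$ and the $\alpha$ free coordinates contribute $\gamma(t)^\alpha$, giving $\frac{B(t)}{A(t)}L(t+1,1)\gamma(t)^\alpha$. In Case (3), the subsum over $\epsilon_1=0$ (with $m_w=1$) evaluates to $\frac{L(t,1)\gamma(t)^{m-1}}{L(t+1,1)A_1(t)}$ and the subsum over $\epsilon_1=1$ (with $m_w=2$, so the denominator loses the $L(t+1,1)$ factor) evaluates to $\frac{\gamma(t)^{m-1}}{A_1(t)}$; adding them and applying $L(t+1,1)=L(-t,1)$ yields $\frac{\gamma(t)^{m-1}}{A_1(t)}\bigl(1+\tfrac{L(t,1)}{L(-t,1)}\bigr)$.

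The main obstacle is the combinatorial step in Case (2): verifying that $\epsilon_{\alpha+1}=1$ is forced and that the remaining positions $\alpha+2,\dots,m$ must be filled by the consecutive integers $n+\alpha+2,\dots,n+m$ in order. This requires bookkeeping the pool of values not used by $w(1),\dots,w(\alpha+1)$ and comparing its size above $w(\alpha+1)$ with the number $m-\alpha-1$ of remaining positions. Once this is settled, everything else reduces to the elementary application of the functional equation, and the degenerate subcase $m=\alpha+1$ (where $B(t)$ becomes the empty product) falls into the uniform formula of Case (2) without extra work.
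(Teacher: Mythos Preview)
Your proposal is correct and follows essentially the same approach as the paper: substituting the defining relations of $W_\alpha$ into (\ref{ct-4}) at $s=\frac{m+n}{2}-\alpha+t$, identifying the admissible $\epsilon$-tuples in each case, and summing via $L(t,1)+L(t+1,1)=\gamma(t)$. The only organizational difference is that the paper routes the parametrization of $W_\alpha$ through the $W_\alpha^0$/$J_w$ decomposition of Lemma~\ref{ct-11} (so the sum is first grouped by the base points $w\in W_\alpha^0$ and then over the subsets $I$), whereas you parametrize $W_\alpha$ directly by the full tuple $(\epsilon_1,\dots,\epsilon_{\min(m,\alpha+1)})$; the resulting computations are identical.
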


\begin{proof}
We first cover the case $m< \alpha+1\le n.$ In this case,
$W_{\alpha}^0=\{w_1,w_2\}$ ($w_1$ and $w_2$ just differ in in the
value of $\varepsilon$).  Then, $\sum_{w\in
  W_{\alpha}}r(\Lambda_{s+t},w)^{-1}=\sum_{J_{w_1}}r(\Lambda_{s+t},w)^{-1}+\sum_{J_{w_2}}r(\Lambda_{s+t},w)^{-1}$
and (we have all the needed expressions in  (\ref{ct3005}))
 \[\sum_{J_{w_1}}r(\Lambda_{s+t},w)^{-1}=\frac{1}{A(t)}\sum_{\#I=0}^{m-1}\binom{m-1}{\#I}L(t,1)^{\#I}L(t+1,1)^{m-1-\#
   I}L(t+1,1),\]
and  $\sum_{J_{w_2}}r(\Lambda_{s+t},w)^{-1}=\frac{1}{A(t)}\sum_{\#I=0}^{m-1}\binom{m-1}{\#I}L(t,1)^{\#I}L(t+1,1)^{m-1-\# I}L(t,1),$ so that
\[\sum_{w\in W_{\alpha}}r(\Lambda_{s+t},w)^{-1}=\frac{\gamma(t)^{m}}{A(t)}.\]

\noindent In the case $\alpha+1\le m\le n,$ the set $W_{\alpha}^0$ turns out to be a singleton (by the discussion after Example \ref{ct3004}). So, let $W_{\alpha}^0=\{w_1\}.$ Let $B_{w_1}(t)=\prod_{i=\alpha+2}^mL(n-\alpha+t+2i-w_1(i),1)$ (where $w_1(i)=n+i,\;i=\alpha+2,\ldots,m$). We conclude $B_{w_1}(t)=B(t)=\prod_{i=2}^{m-\alpha}L(i+t,1)$ is holomorphic (and non-zero). We conclude, analogously as in the previous case, that for $\alpha>0$ we have  $\sum_{w\in W_{\alpha}}r(\Lambda_{s+t},w)^{-1}=\frac{B_{w_1}(t)}{A(t)}L(t+1,1)\gamma(t)^{\alpha}.$ If $\alpha=0,$ we get  $\sum_{w\in W_{\alpha}}r(\Lambda_{s+t},w)^{-1}=\frac{L(t+1,1)}{A(t)}\prod_{i=2}^mL(t+i,1).$

\noindent If $\alpha=m=n$ the set $W_{\alpha}^0$ consists of  four elements (as $m_w=1$ or $m_w=2$ and $\varepsilon_m$ equals $0$ or $1$; cf. proof of Lemma \ref{ct-40}). We conclude that for $m\ge 3$, the contribution to  $\sum_{w\in W_{\alpha}}r(\Lambda_{s+t},w)^{-1}$ from $J_{w_1}$ equals $\frac{L(t+1,1)}{A_1(t)}\gamma(t)^{m-2},$ the contribution over $w_2$  equals to  $\frac{L(t,1)}{A_1(t)}\gamma(t)^{m-2},$ over $w_3$  equals to $\frac{L(t,1)}{A_1(t)}\gamma(t)^{m-2}$, and over $w_4$ equals $\frac{L(t,1)^2}{L(t+1,1)A_1(t)}\gamma(t)^{m-2}.$ All together, we get $\frac{\gamma(t)^{m-1}}{A(t)}(1+\frac{L(t,1)}{L(-t,1)}).$
If $m=n=\alpha=1$ we directly get
$r(\Lambda_{s+t},w)^{-1}=\frac{L(t,1)}{L(-t,1)}$ where $w$ is the
unique non-trivial element of the Weyl group, and with the
contribution $1$ from the identity element of the Weyl group, the sum of these two normalization factors is $1+\frac{L(t,1)}{L(-t,1)}.$
\noindent
If  $m=n=\alpha=2$  we have $W_{\alpha}=W_{\alpha}^0=\{w_1,w_2,w_3,w_4\}$ where we use the same notation as in the case $m\ge 3.$ By the direct computation, we get  $\sum_{w\in W_{\alpha}}r(\Lambda_{s+t},w)^{-1}=\frac{\gamma(t)}{L(t+2,1)}(1+\frac{L(t,1)}{L(-t,1)}).$
\end{proof}

Now we move to the general case ($w$ does not necessarily belong to
$W_{\alpha}$). We now examine the contribution of all $w_I$ which
belong to the same orbit as $w.$  The set of all such elements we denote
by $[w].$ We again attach to $w$ a set $\{(i_1,k_1),\ldots, (i_r,k_r)\},$
i.e.,  the set of intervals $\{I_1,\ldots,I_r\}$ as was explained
above. In addition, we choose $w$ to be a specific base point; we
chose $w\in [w]$ in a way that for each $j=1,2,\ldots, r$ we have
$w(i_j)<w(m+n-\alpha+i_j)$ (we had that assumption in Lemma \ref{lem:poles}).

\begin{Lem} 
\label{lem:polestogether}
Assume $n>\alpha.$ Let $w$ be a base-point (explained above). Then,
the sum $\sum_{w'\in [w]}r(\Lambda_{s+t},w')^{-1}$ is a holomorphic function for $t=0$ if $\alpha+1>m;$ it may have a pole of at most first order if $\alpha+1\le m.$
\end{Lem}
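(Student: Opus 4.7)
The plan is to factorize the orbit sum over the intervals $I_1, \ldots, I_r$ attached to the base-point $w$, and then to apply the functional equation $L(s,1) = L(1-s,1)$ interval by interval to cancel the poles, leaving only contributions from ``unflippable'' indices.

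First, since $w'(i) = w(i)$ for each $i \in F := \{m_w, \ldots, m\} \setminus \bigcup_{j=1}^{r} I_j$ and each $w' \in [w]$, and the product in (\ref{ct-4}) splits into factors indexed by $i$, I parametrize elements of $[w]$ by subsets $S \subseteq \{1, \ldots, r\}$ (encoding which intervals are flipped) and write
$$
\sum_{w' \in [w]} r(\Lambda_{s+t}, w')^{-1} \;=\; \frac{C(t)}{D(t)} \prod_{j=1}^{r} \bigl(a_j(t) + b_j(t)\bigr),
$$
where $D(t) = \prod_{i=m_w}^{m} L\bigl(s+t+\tfrac{n-m}{2}+i, 1\bigr)$, $C(t) = \prod_{i \in F} L\bigl(s+t+\tfrac{n-m}{2}+2i-w(i), 1\bigr)$ gathers the numerator factors from the fixed indices, and $a_j(t)$ (respectively $b_j(t)$) is the product of numerator factors from $i \in I_j$ when $I_j$ is not flipped (respectively flipped).

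Next, I show that each factor $a_j(t) + b_j(t)$ is holomorphic and nonzero at $t=0$. Lemma~\ref{lem:poles} gives $a_j(t) = L(t+1, 1)\,\tilde a_j(t)$ and $b_j(t) = L(t, 1)\,\tilde b_j(t)$ with $\tilde a_j, \tilde b_j$ holomorphic and nonzero at $t=0$. The consecutive-integer structure from Lemma~\ref{lem:t} pairs the arguments of the non-pole factors in $\tilde a_j$ with those in $\tilde b_j$ under the involution $x \leftrightarrow 1-x$, so the functional equation forces $\tilde a_j(0) = \tilde b_j(0)$. Combined with the identity $L(t+1, 1) + L(t, 1) = L(-t, 1) + L(t, 1) = \gamma(t)$ (holomorphic and nonzero at $t=0$), this gives $a_j(t) + b_j(t) = \gamma(t)\,\tilde h_j(t)$ for some $\tilde h_j$ holomorphic and nonzero at $t=0$. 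Thus every ``flippable'' pole cancels after summing over the orbit. For the prefactor, the hypothesis $n > \alpha$ makes every argument of $D(t)$ at $t=0$ equal to $n - \alpha + i \ge 2$, so $D(t)$ is holomorphic and nonzero; a factor of $C(t)$ contributes a pole at $t=0$ precisely when $w(i) - 2i \in \{n-\alpha-1, n-\alpha\}$ (when $i$ is what I call a pole index). Hence the pole order of the orbit sum equals the number of pole indices in $F$. For $\alpha+1 \le m$, any unflippable pole index $i$ satisfies $i > \alpha$ (else $m+n-\alpha+i \le m+n$ and $i$ could start a swap); and if $\alpha+1 \le i < j \le m$ were two such pole indices, then monotonicity of $w$ on $\{1, \ldots, m\}$ combined with $w(j) \ge 2j + n - \alpha - 1$ would give $w(m) \ge j + m + n - \alpha - 1$, contradicting $w(m) \le m + n$. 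So there is at most one such index, giving at most a simple pole.

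The hardest step is the case $\alpha+1 > m$, where I must show that $C(t)$ is actually holomorphic, i.e., every pole index lies in some $I_j$. The range argument ($i \le m \le \alpha$ forces $m+n-\alpha+i \le m+n$) shows every index is ``in range,'' but one must additionally produce, for each pole index $i$, an admissible interval containing $i$. This is achieved by a cascading construction: starting from $I = \{i\}$, one extends rightward whenever one of the monotonicity conditions (\ref{eq:condition1})--(\ref{eq:condition2}) fails, and the process terminates by reaching $m$ with an admissible interval inside $\{i, \ldots, m\}$, precisely because $m \le \alpha$ keeps the second-block indices $m+n-\alpha+j$ in range throughout. Combined with the uniqueness of the pole index per interval from Lemma~\ref{lem:poles}, this forces every pole index to lie in $\bigcup_j I_j$; hence $C(t)$ is holomorphic at $t=0$ and the orbit sum is holomorphic as required.
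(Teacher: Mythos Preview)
Your overall strategy matches the paper's exactly: factor the orbit sum as $\frac{C(t)}{D(t)}\prod_j(a_j(t)+b_j(t))$, show each $a_j+b_j$ is holomorphic at $t=0$ by proving $\tilde a_j(0)=\tilde b_j(0)$, and then reduce everything to controlling the poles of $C(t)$ (the paper calls this $B_1(t)$). Your functional-equation justification of $\tilde a_j(0)=\tilde b_j(0)$ is a pleasant repackaging of what the paper describes as a ``tedious but straightforward'' induction on $l$.

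The genuine gap is in your analysis of $C(t)$. The sentence ``any unflippable pole index $i$ satisfies $i>\alpha$ (else $m+n-\alpha+i\le m+n$ and $i$ could start a swap)'' is the heart of the lemma, and you have not proved it. Having the partner index $m+n-\alpha+i$ in range is necessary but far from sufficient for $i$ to lie in some $I_j$: one must verify the monotonicity constraints (\ref{eq:condition1})--(\ref{eq:condition2}). In particular, for a pole index with $\epsilon_i=0$ (that is, $w(i)=2i+n-\alpha$) the obstruction to $\{i\}$ being admissible can sit at the \emph{left} endpoint, since one may have $w(i-1)>w(m+n-\alpha+i)$; a purely rightward cascade cannot repair this. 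The paper isolates exactly this difficulty in Lemma~\ref{lem:B(t)}: it first proves the intervals $I_1,\ldots,I_r$ are contiguous, then runs a \emph{descending} argument to exclude poles for $i<i_1$ and to rule out $\epsilon_i=0$ for $i>i_r+k_r-1$, and only then an \emph{ascending} argument to show the remaining pole (necessarily with $\epsilon_i=1$) is unique and forces an admissible interval reaching $m$ when $\alpha+1>m$. Your rightward cascade is essentially the last of these steps, but it relies on the preceding ones --- connectedness and the elimination of $\epsilon_i=0$ --- which you have not supplied. Without them your claim that every pole index $\le\alpha$ is flippable, and hence the holomorphy for $\alpha+1>m$ and the simple-pole bound for $\alpha+1\le m$, are unjustified.
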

\begin{proof}
We denote $A(t)=\prod_{i=m_w}^mL(s+t+\frac{n-m}{2}+i,1)$ the common denominator of all the expressions $r(\Lambda_{s+t},w')^{-1},$ for all $w'\in [w].$ Note that this expression is holomorphic and non-zero. We also introduce 
\[B_1(t)=\prod_{\substack{i\in\{m_w,\ldots, m\}\setminus\\ (\cup_{j=1}^rI_j)}}L(s+t+\frac{n-m}{2}+2i-w(i),1)=\prod_{\substack{i\in\{m_w,\ldots, m\}\setminus\\ (\cup_{j=1}^rI_j)}}L(t+n-\alpha+2i-w(i),1)\]
 (this expression is also common for all $w'\in [w],$ since for those indexes $w'(i)=w(i)$).
Let $a_j(t)=\prod_{i\in I_j}L(s+t+\frac{n-m}{2}+2i-w(i),1), b_j(t)=\prod_{i\in I_j}L(s+t+\frac{n-m}{2}+2i-w(m+n-\alpha+i),1)=\prod_{i\in I_j}L(s+t+\frac{n-m}{2}+2i-w_{I_j}(i),1).$
Then, if $w'\in [w],$ there exists $I$ such that $w'=w_{I},$ and for this  $I$ there exists a subset $C=\{l_1,\ldots,l_p\}\subset \{1,2,\ldots,r\}$ such that $I=\cup_{j=1}^pI_{l_j}.$ This means that $r(\Lambda_{s+t},w')^{-1}=\frac{B_1(t)}{A(t)}\prod_{i\in \{1,2,\ldots,r\}\setminus C}a_i(t)\prod_{j\in C}b_j(t).$ We conclude
\[\sum_{w'\in [w]}r(\Lambda_{s+t},w')^{-1}=\frac{B_1(t)}{A(t)}(a_1(t)+b_1(t))\ldots (a_r(t)+b_r(t)).\]
Note that, according to Lemma \ref{lem:poles}, $a_j(t)=L(t+1,1)a_j'(t),\; b_j(t)=L(t,1)b_j'(t),$ where $a_j'$ and $b_j'$ are holomorphic near $t=0.$ So, we can write $a_j'(t)=a_0+a_1t+a_2t^2+\cdots, \; b_j'(t)=b_0+b_1t+b_2t^2+\cdots,$
$L(t,1)=\frac{c_{-1}}{t}+c_0+c_{-1}t+\cdots, L(t+1,1)=L(-t,1)=-\frac{c_{-1}}{t}+c_0-c_1t+\cdots.$
We then get $a_j(t)+b_j(t)=\frac{c_{-1}(b_0-a_0)}{t}+(c_0(a_0+b_0)+c_1(b_1-a_1))+\cdots$ We, of course, have $a_0=a_j'(0)$ and $b_0=b_j'(0).$ We remind the reader on the above expressions for $a_j(t)$ and $b_j(t).$ Now, we use the induction over $l$ (the length of sequences $t_1',\ldots, t_l'$;-tedious, but straightforward computation using that $t_i'>t_i'', i=1,2,\ldots, l-1$) to prove that $b_0=a_0.$  We are done provided the following Lemma.

\begin{Lem}
\label{lem:B(t)}
The function $B_1(t)$ is holomorphic if $m<\alpha+1,$ and may have a pole of order at most one if $\alpha+1\le m.$
\end{Lem}
\begin{proof} First, we prove that the intervals $I_1,\ldots, I_r$ from the previous
lemma are all connected, i.e., there are no holes between $I_i$ and
$I_{i+1}.$ Assume the opposite, e.g., 
for some $l,$ $i_l+k_l-1<i:=i_l+k_l<i_{l+1}.$  Note that
$w(i-1)=w(i_l+k_l-1)<w(m+n-\alpha+i_l+k_l)=w(m+n-\alpha+i)$ 
and $w(m+n-\alpha+i-1)<w(i),$ since $i_l+k_l-1$ is the end of an interval (of
change). Since from $i$ we cannot start any 
interval, either $w(m+n-\alpha+i+t)>w(i+t+1)$ or
$w(i+t)>w(m+n-\alpha+i+t+1),$ for any $t\in \bbZ_{\ge 0}$ for 
which these expressions  make sense. But for $i+t=i_{l+1}-1$ we get a contradiction.

Now we prove that for $i\in \{m_w,\ldots, i_1-1\}$ the expression $L(t+n-\alpha+2i-w(i),1)$ appearing in  $B_1(t)$ has no pole (for $t=0$). Assume the opposite, and let  $i$ be such a point for which a pole appears. Assume firstly that $w(i)=2i+n-\alpha.$   Then, $j_i=m+n-\alpha+i$ so that
\[w(m+n-\alpha+i)<w(i)<w(m+n-\alpha+i+1),\]
\[w(m+n-\alpha+i)<w(i+1).\]
Since we cannot start any interval from $i,$ we must have
$w(i-1)>w(m+n-\alpha+i).$  We continue, similarly, so if
$w(i-2)<w(m+n-\alpha+i-1),$ we again get contradiction. By induction,
we conclude that $w(i-j)>w(m+n-\alpha+i-j+1),$ as long as this makes
any sense; i.e., until we get to $i-j=m_w.$ But we necessarily have
$w(m_w-1)<w(m+n-\alpha+m_w)$ (or $m_w=1$; in both cases we could start
interval of change from $m_w$). So this cannot happen. We then assume
that for $i<i_1$ we have $w(i)=2i+n-\alpha-1.$ Now we proceed
analogously, i.e., by considering the obstacles for not starting interval of change at $i,$ since $w(m+n+i-\alpha-1)<w(i)<w(m+n-\alpha+i)<w(m+n-\alpha+i+1),$ and so that $w(m+n-\alpha+i)>w(i-1),$ we must have $w(m+n-\alpha+i) >w(i+1).$ Inductively, this would mean that $w(m+n-\alpha+i+t)>w(i+t+1), $ for every $t\in \bbZ_{\ge 0}$ for which these expressions make sense. This means also that $j_{i+t+1}\le m+n-\alpha+i+t-1,$ so that we cannot have a pole for  $w(i+t+1).$ But we do have a pole for $i+t+1=i_1.$

Now assume that $i>i_r+k_r-1$ and that $B_1(t)$ has a pole for that $i.$ If we would assume that $w(i)=2i+n-\alpha,$ this would, as in the case of $i<i_1,$ by descending indices, lead to conclusion  $w(i-j)>w(m+n-\alpha+i-j+1),$ unless we have  a start of some interval at $i-j+1.$ But we do have a start of an interval at $i_r,$ but this would mean that this interval does not end by $i_r+k_r-1,$ but that it lasts until $i.$ A contradiction.

Now we find  the smallest $i>i_r+k_r-1,$ such that we have a pole for $w(i);$ we see that we necessarily have $w(i)=2i+n-\alpha-1.$  By the ascending argument as above, we conclude that  $w(m+n-\alpha+i+t)>w(i+t+1),$ for every $t$ for which expression makes sense; this also means that there are no poles for $i+t+1>i$ (so that $B_1(t)$ has at most one pole for $i>i_r+k_r-1$). Assume now that $\alpha+1>m$ and that we are in this situation (of  $B_1(t)$ having pole). If we plug $t+i=m$ in these expressions, we see that the condition $w(m+n-\alpha+i+t)>w(t+i+1)$ is void; so that we would actually have a possibility of changing intervals
\[w(m+n-\alpha+i-1)<w(i)<\ldots<w(m)<w(m+n-\alpha+m-1)(<w(m+n-\alpha+m+1)),\]
\[w(i-1)<w(m+n-\alpha+i)<\ldots<w(m+n-\alpha+m),\]
and this is a contradiction. So, the only possible pole for $B_1(t)$
occurs for $i>i_r+k_r-1,$ and then only if $\alpha+1\le m;$ this is
what we have seen happening for the Weyl group element $w\in W_{\alpha}$ if $\alpha+1\le m;$ there the interval of change ends with $i=\alpha,$ and the last pole is obtained for $i=\alpha+1.$ 

\end{proof}
\end{proof}
\begin{Rem} \label{very--important}
If $\alpha+1\le m,$ the situations for which $B_1(t),$ and then, consequently,  $\sum_{w'\in [w]}r(\Lambda_{s+t},w')^{-1}$ has a pole of the first order, do occur and not only, as was already noted, for $w$ where the pole is of the maximal order.
E.g., this situation occurs for the element $w_0$
(\ref{rel-e-2}); we have $w_0(i)=n+i.$  Further,  we have
$B_1(t)=\prod_{\substack{i\in\{m_w,\ldots, m\}\setminus\\
    (\cup_{j=1}^rI_j)}}L(-\alpha+t+i,1),$ and we have a pole for
$i=\alpha +1,$ since that $i$ cannot be included in $\cup_{j=1}^rI_j.$
\end{Rem}

We now examine the case of $m=n=\alpha.$ We can check that all the
discussions in this subsection remains the same, 
unless $i_1=1$ in the first interval $I_1$ of possible change for the
element $w.$ Then we do not necessarily have 
$i_1\ge m_w.$ Now assume that $m=n=\alpha,$ and $i_1=1.$ We want to examine the poles of $r(\Lambda_s,w)^{-1}$ and $r(\Lambda_s,w_{I_1})^{-1}.$  Assume  that $w(i_1)=w(1)<w(m+1).$ Then $w(1)=1$ and  Lemma \ref{lem:t} holds. Having in mind (\ref{possibility1}), we have  $m_w=t_1'+2$ and $m_{w_{I_1}}=1.$ We study the analogon of Lemma \ref{lem:poles} in this case. We see that there are no poles in the expression for $r(\Lambda_s,w)^{-1}$ coming from $i\in I_1$ (since there are no poles in the first subinterval, $w(1),\ldots w(1+t_1')$). As for the part of $r(\Lambda_s,w_{I_1})^{-1}$ coming from $i\in I_1,$ which is equal to $\prod_{i=1}^{k}\frac{L(t+2i-w_{I_1}(i))}{L(t+i,1)}$ we again get a pole of the first order in the numerator for $i=k_1$ (a contribution is then $L(t,1)$) but we also have a pole in the denominator $L(t+1,1).$ So, we conclude 
\[r(\Lambda_{s+t},w_{I_1})^{-1}=\prod_{i=1}^{k_1}\frac{L(t+2i-w_{I_1}(i),1)}{L(t+i,1)}\prod_{i=k+1}^m\frac{L(t+2i-w(i),1)}{L(t+i,1)}.\]

Recall that to $w$   we attach a set $\{I_1,\ldots,I_r\}$ of possible
intervals of change, where $I_1=\{1,\ldots,k_1\}.$ We again assume
that $w$ is a base point in its' orbit, i.e.,
$w(i_j)<w(m+i_j),\;j=1,\ldots,r.$ If $I=\cup_{l=1}^sI_{j_s},
\;\{j_1,\ldots,j_l\}\subset\{1,2,\ldots,r\}$ we have two distinct
situations: $I_1\subset I$ and $I_1\not\subset  I.$ If the second possibility occurs, then $m_w=m_{w_I}$ and we can again denote $A(t)=\prod_{i=m_w}^mL(t+i,1),$ the common denominator for all those expressions $r(\Lambda_{s+t},w_I)^{-1}.$ We again introduce $B_1(t)=\prod_{i\in\{m_w,\ldots, m\}\setminus (\cup_{j=1}^rI_j)}L(t+2i-w(i),1).$ We denote by $[w]'$ a part of the orbit $[w]$ comprised of $w_I$ for which $I_1\not\subset I.$ We then have
\[\sum_{w'\in [w]'}r(\Lambda_{s+t},w')^{-1}=\frac{B_1(t)}{A(t)}a_1(t)(a_2(t)+b_2(t))\ldots (a_r(t)+b_r(t)),\]
where $a_j(t)$ and $b_j(t),\;j=2,\ldots r$ are defined as before:  $a_j(t)=\prod_{i\in I_j}L(t+2i-w(i),1), b_j(t)=\prod_{i\in I_j}L(t+2i-w(m+n-\alpha+i),1)$ and $a_1(t)=\prod_{i=m_w}^{k_1}L(t+2i-w(i),1).$ On the other hand, we define $A_1(t)=\prod_{i=1}^mL(t+i,1),$ and $\widetilde{B_1}(t)=\prod_{i\in\{1,\ldots, m\}\setminus (\cup_{j=1}^rI_j)}L(t+2i-w(i),1)$ and then get
\[\sum_{w'\in [w]\setminus [w]'}r(\Lambda_{s+t},w')^{-1}=\frac{\widetilde{B_1(t)}}{A_1(t)}b_1(t)(a_2(t)+b_2(t))\ldots (a_r(t)+b_r(t)),\]
where  $b_1(t)=\prod_{i=1}^{k_1-1}L(t+2i-w(m+i),1)L(t,1).$
 Note that, by the proof of Lemma \ref{lem:B(t)} (the intervals $I_1,\ldots,I_r$ are connected) and the fact that $m_{w}\le k_1+1,$ we have that $B_1(t)=\widetilde{B_1}(t)=\prod_{i=i_r+k_r}^mL(t+2i-w(i),1).$ To conclude, we have
\begin{align*}
\sum_{w'\in [w]}r(\Lambda_{s+t},w')^{-1}=&(a_2(t)+b_2(t))\ldots (a_r(t)+b_r(t))\frac{B_1(t)}{A(t)}\times\\
&(a_1(t)+\frac{\prod_{i=1}^{k_1-1}L(t+2i-w(m+i),1)L(t,1)}{\prod_{i=2}^{m_w-1}L(t+i,1)L(-t,1)}).
\end{align*}
Now, again by induction (as in the proof of Lemma \ref{lem:polestogether}) we prove that $\lim_{t\to 0}\prod_{i=2}^{m_w-1}L(t+i,1)\prod_{i=m_w}^{k_1}L(t+2i-w(i),1)=\lim_{t\to 0}\prod_{i=1}^{k_1-1}L(t+2i-w(m+i),1).$  Since in this case, by Lemma \ref{lem:B(t)}, $B_1(t)$ is holomorphic, we get that
\[\lim_{t\to 0}\sum_{w'\in [w]}r(\Lambda_{s+t},w')^{-1}=0.\]
We have therefore proved the following Lemma.
\begin{Lem}
Assume that $m=n=\alpha,$ and $w$ an element in the Weyl group satisfying (\ref{ct-1}) and such that the first interval of change $I_1$ starts with $i_1=1.$  Then, 
\[\lim_{t\to 0}\sum_{w'\in [w]}r(\Lambda_{s+t},w')^{-1}=0.\]
\end{Lem}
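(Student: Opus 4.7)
The plan is to reproduce the decomposition set up in the paragraphs preceding the lemma, and then exploit the special cancellation $L(t,1) + L(-t,1) = \gamma(t)$ that makes the case $m = n = \alpha$ rigid. First I would split the orbit $[w] = [w]' \sqcup ([w] \setminus [w]')$ according to whether $I_1 \not\subset I$ (for $w' = w_I$). For $w' \in [w]'$ the index $m_{w'} = m_w$ is preserved, and the common denominator of the $r(\Lambda_{s+t}, w')^{-1}$ in this piece is $A(t) = \prod_{i=m_w}^m L(t+i,1)$. For $w' \in [w] \setminus [w]'$, flipping $I_1$ drops $m_{w'}$ to $1$ (because $i_1 = 1$), which enlarges the denominator to $A_1(t) = \prod_{i=1}^m L(t+i,1)$ and, by Lemma \ref{lem:poles}, introduces the extra factor $L(t,1)$ in the numerator coming from $i = k_1$. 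Summing each piece one obtains the two factorizations stated in the paper, with a common factor $\prod_{j=2}^r (a_j(t) + b_j(t))$.

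Next I would verify that $B_1(t) = \widetilde{B_1}(t)$. Both equal $\prod_{i=i_r+k_r}^m L(t+2i-w(i),1)$: the proof of Lemma \ref{lem:B(t)} shows that the intervals $I_1, \ldots, I_r$ are connected, and together with $m_w \le k_1 + 1$ this forces every index of $\{1, \ldots, m\} \setminus \bigcup_j I_j$ to lie to the right of $I_r$. Combining the two partial sums, and rewriting $A(t)/A_1(t)$ using $L(t+1,1) = L(-t,1)$, one arrives at the closed form
\begin{equation*}
\sum_{w' \in [w]} r(\Lambda_{s+t}, w')^{-1} = \frac{B_1(t)}{A(t)} \prod_{j=2}^r (a_j(t)+b_j(t)) \cdot \left( a_1(t) + \frac{P(t)\, L(t,1)}{R(t)\, L(-t,1)} \right),
\end{equation*}
with $P(t) = \prod_{i=1}^{k_1-1} L(t+2i-w(m+i),1)$ and $R(t) = \prod_{i=2}^{m_w-1} L(t+i,1)$.

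The key step is the identity $\lim_{t \to 0} R(t)\, a_1(t) = \lim_{t \to 0} P(t)$, which I would prove by induction on the length $l$ of the sequence $t_1' < t_2' < \cdots < t_l' = k_1 - 1$ attached to $I_1$ (Lemma \ref{lem:t}), paralleling the induction used in Lemma \ref{lem:polestogether}. The strict inequalities $t_i' > t_i''$ allow one to pair factors of the two products via the integer shifts among $L$-values, yielding $Q(0) = P(0)$, where $Q(t) := R(t)\, a_1(t)$. Given this, expanding $L(t,1) = c_{-1}/t + c_0 + \cdots$ and $L(-t,1) = -c_{-1}/t + c_0 - \cdots$ shows that the bracket equals
\begin{equation*}
\frac{Q(t)\, L(-t,1) + P(t)\, L(t,1)}{R(t)\, L(-t,1)},
\end{equation*}
whose numerator is holomorphic at $t = 0$ (the principal parts cancel because $P(0) = Q(0)$), while the denominator has a simple pole at $t = 0$ coming from $L(-t,1)$; so the bracket vanishes at $t = 0$. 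Since $B_1(t)/A(t)$ and the remaining factors $\prod_{j \ge 2}(a_j + b_j)$ are holomorphic at $t = 0$ by Lemmas \ref{lem:B(t)} and \ref{lem:polestogether}, the whole sum tends to zero.

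The main obstacle will be the inductive combinatorial step establishing $P(0) = Q(0)$. This requires carefully tracking, across the subintervals of $I_1$ cut out by the partition $(t_i', t_i'')$, which $L$-factor in $Q$ matches which $L$-factor in $P$ after applying integer shifts; the strict inequalities $t_i' > t_i''$ from Lemma \ref{lem:t} are precisely what guarantees that such a bijective pairing exists.
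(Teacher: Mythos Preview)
Your proposal is correct and follows essentially the same route as the paper: the same split $[w]=[w]'\sqcup([w]\setminus[w]')$, the same identification $B_1(t)=\widetilde{B_1}(t)$, the same closed-form factorization with the bracket $a_1(t)+\dfrac{P(t)L(t,1)}{R(t)L(-t,1)}$, and the same inductive identity $\lim_{t\to 0}R(t)a_1(t)=\lim_{t\to 0}P(t)$ (paralleling Lemma~\ref{lem:polestogether}) to force the bracket to vanish while $B_1(t)/A(t)$ and the $\prod_{j\ge 2}(a_j+b_j)$ stay holomorphic.
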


\begin{Rem}
\label{rem1}Note that for any $w$ satisfying  (\ref{ct-1}) with  $m=n=\alpha,$ there is always interval of change  $\{1,2,\ldots,m\}$  and we conclude that $\cup_{j=1}^r I_j=\{1,2,\ldots,m\},$ so that for  $m=n=\alpha$ and  $w$ satisfying  (\ref{ct-1}) we are always in the situation described by the previous lemma.
\end{Rem}

\section{The Conclusion Of  The Proof Of Theorem \ref{main-res-2}}\label{conclusion}

We remind the reader that we assume $\chi=\mu$ is a trivial character,
 and  $s$ is  equal to $\frac{n+m}{2}-\alpha,$ for some  integer $\alpha$ such that $0\le
 \alpha <\frac{m+n}{2}$. 

We work with a  $\Ind_{P_{m, n}(\Bbb A)}^{GL_{m+n}(\Bbb A)}\left(|\det |^{s/2}
  \otimes |\det |^{-s/2}\right)$ for which we find convenient to use
  Zelevinsky notation \cite{Z} for the induced representation:
$\nu^{s/2}\circ \det 1_{GL(m)} \times \nu^{-s/2} \circ \det
  1_{GL(n)}$, where $\nu=|\ |$. Similar notation is used in the local case.

\begin{Lem}\label{proof-main-res-2-0} Let $p\le \infty$. Then, the induced representations 
$\nu^{s/2}\circ \det 1_{GL(m, \Bbb Q_p)} \times \nu^{-s/2} \circ \det
  1_{GL(n, \Bbb Q_p)}$ and 
$\nu^{-s/2}\circ \det 1_{GL(n, \Bbb Q_p)} \times \nu^{s/2} \circ \det
  1_{GL(m, \Bbb Q_p)}$ have the same semi--simplifications in the the
  corresponding Grothendieck group of finite length
  representations. In particular, they share the unique irreducible
  spherical subquotient. 
\end{Lem}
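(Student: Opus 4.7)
The plan is to prove the stronger statement that the two induced representations define the same class in the Grothendieck group of finite length $GL_{m+n}(\Bbb Q_p)$-modules. The assertion about the unique spherical irreducible subquotient will then follow: each representation embeds (via induction in stages) into the full principal series $\Ind_{B_{m+n}(\Bbb Q_p)}^{GL_{m+n}(\Bbb Q_p)}(\Lambda_{s,p})$, hence contains a nonzero spherical vector and therefore has at least one spherical irreducible subquotient, which is unique by the Satake/Langlands parametrization; equality of Grothendieck classes then forces these two subquotients to coincide.

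The key move is the block-matrix identity $w_0 P_{m,n} w_0^{-1} = P_{n,m}^{-}$, where $w_0$ is the element (\ref{rel-e-2}) and $P_{n,m}^{-}$ denotes the parabolic opposite to $P_{n,m}$. Under this conjugation, the character $|\det|^{s/2}\boxtimes|\det|^{-s/2}$ of the Levi $GL_m\times GL_n$ of $P_{m,n}$ is carried to the character $|\det|^{-s/2}\boxtimes|\det|^{s/2}$ of the Levi $GL_n\times GL_m$ of $P_{n,m}$. The standard isomorphism $\Ind_P^G\sigma\cong \Ind_{gPg^{-1}}^G({}^{g}\sigma)$ given by left translation by $g^{-1}$, applied with $g=w_0$, then yields
\[
|\det|^{s/2}\,1_{GL_m(\Bbb Q_p)}\times |\det|^{-s/2}\,1_{GL_n(\Bbb Q_p)}\;\cong\;\Ind_{P_{n,m}^{-}(\Bbb Q_p)}^{GL_{m+n}(\Bbb Q_p)}\bigl(|\det|^{-s/2}\boxtimes|\det|^{s/2}\bigr).
\]

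What remains is to compare parabolic induction from $P_{n,m}$ and from its opposite $P_{n,m}^{-}$ for the same inducing character $\sigma = |\det|^{-s/2}\boxtimes|\det|^{s/2}$. For the $p$-adic factor this follows from the classical commutativity of the Grothendieck ring of admissible representations of the general linear groups under parabolic induction, a consequence of the Bernstein--Zelevinsky geometric lemma (cf.\ \cite{Z}); it gives $[\Ind_{P_{n,m}^{-}}^G\sigma]=[\Ind_{P_{n,m}}^G\sigma]$. For the archimedean factor the same equality may be obtained either from Harish-Chandra's theorem that the distribution character of $\Ind_P^G\sigma$ depends only on $(M,\sigma)$ and not on the choice of parabolic $P$ with Levi $M$, or by inspection of the Howe--Lee classification \cite{HL} of subquotients of degenerate principal series, whose combinatorics depends only on the underlying multiset of characters appearing in $\Lambda_{s,\infty}$.

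The step I expect to be the main obstacle is precisely the archimedean comparison of $\Ind_P$ and $\Ind_{P^-}$: while the $p$-adic case is genuinely routine from Bernstein--Zelevinsky, in the real case one must either handle distribution characters of possibly non-tempered induced representations, or extract the statement directly from the Howe--Lee tables. Once this is in hand, combining with the conjugation step yields the equality of Grothendieck classes, and the statement about the unique spherical subquotient follows as indicated above.
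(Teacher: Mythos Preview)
Your argument is correct and follows essentially the same route as the paper. The paper's proof is a single sentence: the two parabolics $P_{m,n}$ and $P_{n,m}$ are associated (their Levi factors are conjugate by $w_0$), and ``the general theory'' then gives equality of semi-simplifications; the spherical claim is declared obvious. Your two-step breakdown (conjugate $P_{m,n}$ to $P_{n,m}^{-}$, then compare $\Ind_{P_{n,m}}$ with $\Ind_{P_{n,m}^{-}}$) is exactly how one unpacks that general theory, so there is no genuine difference in strategy. Your concern about the archimedean comparison is overcautious: the statement that induction from associated parabolics yields equal distribution characters (hence equal semi-simplifications) is a classical theorem of Harish-Chandra valid in full generality for real reductive groups, and the paper simply takes it for granted without comment.
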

\begin{proof} The representations are induced from associated maximal
  parabolic subgroups and inducing data are associated by acting by
  conjugation with $w_0$ given by (\ref{rel-e-2}). Thus, the general
  theory implies the first claim. The second claim is then obvious.
\end{proof}

\begin{Lem}\label{proof-main-res-2-1} The common irreducible spherical component of 
$\nu^{s/2}\circ \det 1_{GL(m, \Bbb Q_p)} \times \nu^{-s/2} \circ \det
  1_{GL(n, \Bbb Q_p)}$ and $\nu^{-s/2}\circ \det 1_{GL(n, \Bbb Q_p)} \times \nu^{s/2} \circ \det
  1_{GL(m, \Bbb Q_p)}$ is their unique irreducible quotient and
  irreducible subrepresentation, respectively, for all
 $p\le \infty$. For $\alpha=0$, this representation is $\nu^{(m-n)/4}1_{GL_{m+n}(\Bbb Q_p)}$.
Finally, both induced representation are
  irreducible for $m\le \alpha <n$.
\end{Lem}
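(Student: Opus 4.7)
The plan is to translate everything to Zelevinsky's segment dictionary and then invoke the local classification of degenerate principal series for $GL_{m+n}$: Zelevinsky \cite{Z} at the non--archimedean places and its Howe--Lee analogue \cite{HL} at $p=\infty$. Under this dictionary, the characters $\nu^{s/2}\cdot 1_{GL(m,\Bbb Q_p)}$ and $\nu^{-s/2}\cdot 1_{GL(n,\Bbb Q_p)}$ correspond to the segments
\[
\Delta_1=\bigl[\tfrac{s-m+1}{2},\ \tfrac{s+m-1}{2}\bigr],\qquad
\Delta_2=\bigl[\tfrac{-s-n+1}{2},\ \tfrac{-s+n-1}{2}\bigr].
\]
Writing $s=\tfrac{m+n}{2}-\alpha$ with $s>0$ and $m\le n$, a direct calculation shows that $\Delta_2$ precedes $\Delta_1$ (the two segments are linked) exactly when $0\le \alpha<m$, while for $m\le \alpha<(m+n)/2$ they are either disjoint non--linked or satisfy $\Delta_1\subsetneq \Delta_2$. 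In either of these last two cases the Zelevinsky/Howe--Lee criterion delivers irreducibility of both $Z(\Delta_1)\times Z(\Delta_2)$ and $Z(\Delta_2)\times Z(\Delta_1)$. Since $(m+n)/2\le n$, this settles the irreducibility assertion for the range $m\le \alpha<n$ claimed in the lemma.

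For $0\le \alpha<m$ the linked case shows that each induced representation has length two. For the first one the inducing exponent $s/2$ on $GL_m$ strictly dominates $-s/2$ on $GL_n$, so the Langlands quotient theorem produces a unique irreducible quotient $J$; dually, the second induced representation has a unique irreducible subrepresentation $J'$. To identify both with the common spherical subquotient supplied by Lemma \ref{proof-main-res-2-0}, apply the local long intertwining operator $\cal N(\Lambda_{s,p},\wit{w_0})$ from the first to the second: by the Gindikin--Karpelevi\v c formula it sends the spherical vector $v_0$ to the spherical vector $v_0'$, hence is non--zero. Its image is an irreducible subrepresentation of the second induced representation containing $v_0'$, forcing it to equal $J'$. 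Surjectivity of $\cal N$ is ruled out by length two, so $\ker \cal N$ is the unique irreducible subrepresentation of the first induced representation (non--spherical, since $\cal N(v_0)\neq 0$), and the quotient map realizes $J\cong J'$ as the common spherical component. This is exactly the sub/quotient assertion of the lemma at every $p\le \infty$.

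Finally, at $\alpha=0$ (so $s=(m+n)/2$) the largest exponent of $\Delta_2$ equals $\tfrac{n-m}{4}-\tfrac12$, one step below the smallest exponent $\tfrac{n-m}{4}+\tfrac12$ of $\Delta_1$; hence $\Delta_1$ and $\Delta_2$ are disjoint and contiguous, and $\Delta_1\cup \Delta_2$ is a single segment of length $m+n$. Consequently $J$ must be a one--dimensional character $\nu^c\cdot 1_{GL_{m+n}(\Bbb Q_p)}$. A direct Frobenius--reciprocity check pins down $c$: such a character occurs as a quotient of $\Ind_{P_{m,n}}(\nu^{s/2}\otimes \nu^{-s/2})$ exactly when $(\nu^c|_{M_{m,n}})\cdot \delta_{P_{m,n}}^{1/2}$ equals the inducing character, i.e.\ $c+n/2=s/2$ and $c-m/2=-s/2$, which forces $s=(m+n)/2$ and $c=(m-n)/4$. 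The main obstacle I anticipate is ensuring that Howe--Lee's archimedean analysis matches Zelevinsky's $p$--adic one uniformly, i.e.\ that the length--two structure and the location of the Langlands quotient/subrepresentation coincide at every place; once this parallelism is granted the whole argument closes at all $p\le \infty$ at once.
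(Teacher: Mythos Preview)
Your approach is genuinely different from the paper's, and it is worth spelling out where the two diverge and where your argument has a gap.

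The paper does not use the long intertwining operator at all in proving this lemma. For $p<\infty$ it invokes Zelevinsky's Proposition~4.6 directly to identify the unique irreducible subrepresentation of $\nu^{-s/2}1_{GL(n)}\times\nu^{s/2}1_{GL(m)}$ explicitly (it is the product $Z(\Delta_1\cup\Delta_2)\times Z(\Delta_1\cap\Delta_2)$, a product of characters and hence visibly spherical). The quotient claim for the other induced representation is then obtained by applying the outer involution $g\mapsto s_{m+n}\,g^{-t}\,s_{m+n}^{-1}$ of $GL_{m+n}$ together with the contragredient; this involution preserves $K$, so sphericity is carried along for free. The archimedean case is handled by the same involution trick, with Howe--Lee replacing Zelevinsky for the structural input. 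For $\alpha=0$ the paper argues via an explicit embedding of both induced representations into principal series in Langlands (resp.\ anti-Langlands) order.

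Your route through the normalized operator $\cal N(\Lambda_{s,p},\wit{w_0})$ has a circularity problem within the paper's logical structure: the holomorphy of $\cal N(\Lambda_{s,p},\wit{w_0})$ on the degenerate principal series at these special values of $s$ is exactly the content of Lemma~\ref{proof-main-res-2-3}, whose proof depends on Lemma~\ref{proof-main-res-2-2}, which in turn depends on the present lemma. Without holomorphy you cannot conclude that the spherical vector survives, and your identification of the image as $J'$ collapses. A second, smaller issue: your appeal to ``the Langlands quotient theorem'' is not literally available, since the inducing data $1_{GL(m)}\otimes 1_{GL(n)}$ is not tempered for $m,n\ge 2$; the unique--quotient property is true, but it comes from Zelevinsky/Howe--Lee rather than from Langlands' theorem, and once you grant yourself that structural input you are essentially back to the paper's direct argument. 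The involution trick avoids both problems cleanly.
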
 
\begin{proof} First, we prove the claim if $\alpha\ge 1$. Assume first that $p$ is finite. Then, (\cite{Z},
 Theorem 4.1) implies the irreducibility of $\nu^{s/2}\circ \det
  1_{GL(m, \Bbb Q_p)} \times \nu^{-s/2} \circ \det
  1_{GL(n, \Bbb Q_p)}$ for $m\le \alpha
 <n$. We remind the reader that the assumption in Theorem
 \ref{main-res-2} is $m\le n$ so this case is possible. 
Now, Lemma \ref{proof-main-res-2-0} completes the proof in this case. 

But, if $1\le
 \alpha\le m-1$, then the induced representation 
$\nu^{-s/2}\circ \det 1_{GL(n, \Bbb Q_p)} \times \nu^{s/2} \circ \det
  1_{GL(m, \Bbb Q_p)}$ has its spherical component as a unique
  irreducible subrepresentation (\cite{Z}, Proposition 4.6).
Then,  $\nu^{s/2}\circ \det
  1_{GL(m, \Bbb Q_p)} \times \nu^{-s/2} \circ \det
  1_{GL(n, \Bbb Q_p)}$
 has its spherical  irreducible subquotient as a unique irreducible 
quotient using the result we just proved  and taking the contragredient
 combined with the involution $s$
 introduced in the proof of (\cite{Z}, Theorem 1.9). The involution
  has the following form:
$$
s(g)=s_{m+n}g^{-t}s_{m+n}^{-1}, 
$$
where $s_{m+n}$ is a matrix which has at the position $i, j$ the
element $\delta_{i, m+n+1-j}$ (Kronecker delta), and  $-t$ denotes
transposition and taking an inverse. We recall more details to this
standard procedure below in
the archimedean case.

In the archimedean case, using the same argument as the one 
in (\cite{Z}, Theorem 1.9), one still have the same involution $s$
which shows that, for  every irreducible subrepresentation $\sigma$ of 
$\nu^{-s/2}\circ \det 1_{GL(n, \Bbb Q_p)} \times \nu^{s/2} \circ \det
  1_{GL(m, \Bbb Q_p)}$, $s(\sigma)$ is an irreducible
subrepresentation of $\nu^{s/2}\circ \det 1_{GL(m, \Bbb Q_p)} \times \nu^{-s/2} \circ \det
  1_{GL(n, \Bbb Q_p)}$. Taking the contragredient, we obtain that
  $\widetilde{s(\sigma)}$ is a quotient of 
$\nu^{s/2}\circ \det 1_{GL(m)} \times \nu^{-s/2} \circ \det
  1_{GL(n)}$. All these three induced representations have the
same length. 
Applying (\cite{HL}, Theorems 3.4.2 and 3.4.4) to the first, one sees 
that they are irreducible for  $m\le \alpha <n$, and that
$\nu^{-s/2}\circ \det 1_{GL(n, \Bbb Q_p)} \times \nu^{s/2} \circ \det
1_{GL(n, \Bbb Q_p)}$ has a unique spherical representation, say $\sigma$, as a
unique irreducible subrepresentation for $1\le \alpha\le m-1$. Then, 
$\widetilde{s(\sigma)}$ is a unique irreducible  quotient of 
$\nu^{s/2}\circ \det 1_{GL(m)} \times \nu^{-s/2} \circ \det
  1_{GL(n)}$. But this representation is also spherical, and,
  consequently, by Lemma \ref{proof-main-res-2-0} , isomorphic to 
$\sigma$. By \cite{T}, this is a general fact but we prefer to use
this simple argument in our particular case.

Indeed, let $K_\infty=O(m+n)$ denotes the orthogonal group (a maximal compact subgroup
of $GL_{m+n}(\mathbb R)$. It is immediate that $s(K_\infty)=K_\infty$. In
particular, $\widetilde{s(\sigma)}$ is spherical.

The case $\alpha=0$ is easier. Then $s=(m+n)/2$. Next, 
$\nu^{s/2}\circ \det 1_{GL(m, \Bbb Q_p)}$ is a quotient of the principal
series representation $\nu^{\frac{s}{2}+\frac{m-1}{2}}\times 
\nu^{\frac{s}{2}+\frac{m-1}{2}-1}\times \cdots \times
\nu^{\frac{s}{2}-\frac{m-1}{2}}$
and $\nu^{-s/2}\circ \det 1_{GL(n, \Bbb Q_p)}$ is a quotient of 
$\nu^{-\frac{s}{2}+\frac{n-1}{2}}\times 
\nu^{-\frac{s}{2}+\frac{n-1}{2}-1}\times \cdots \times
\nu^{-\frac{s}{2}-\frac{n-1}{2}}$. Then, 
$\nu^{s/2}\circ \det 1_{GL(m, \Bbb Q_p)} \times \nu^{-s/2} \circ \det
  1_{GL(n, \Bbb Q_p)}$ is a quotient of 
$$
\nu^{\frac{s}{2}+\frac{m-1}{2}}\times 
\nu^{\frac{s}{2}+\frac{m-1}{2}-1}\times \cdots \times
\nu^{\frac{s}{2}-\frac{m-1}{2}}\times 
\nu^{-\frac{s}{2}+\frac{n-1}{2}}\times 
\nu^{-\frac{s}{2}+\frac{n-1}{2}-1}\times \cdots \times
\nu^{-\frac{s}{2}-\frac{n-1}{2}}.$$
But this principal series has $\nu^{(m-n)/4}1_{GL_{m+n}(\Bbb Q_p)}$ as
  its Langlands quotient. 
Therefore, the unique irreducible quotient of $\nu^{s/2}\circ \det 1_{GL(m, \Bbb Q_p)} \times \nu^{-s/2} \circ \det
  1_{GL(n, \Bbb Q_p)}$ is $\nu^{(m-n)/4}1_{GL_{m+n}(\Bbb Q_p)}$.

Similarly, $\nu^{-s/2}\circ \det 1_{GL(n, \Bbb Q_p)} \times \nu^{s/2} \circ \det
  1_{GL(m, \Bbb Q_p)}$ is a subrepresentation of 
$$
\nu^{-\frac{s}{2}-\frac{n-1}{2}}\times 
\nu^{-\frac{s}{2}-\frac{n-1}{2}+1}\times \cdots \times
\nu^{-\frac{s}{2}+\frac{n-1}{2}}\times 
\nu^{\frac{s}{2}-\frac{m-1}{2}}\times 
\nu^{\frac{s}{2}+\frac{m-1}{2}+1}\times \cdots \times
\nu^{\frac{s}{2}+\frac{m-1}{2}}.$$
But this principal series has $\nu^{(m-n)/4}1_{GL_{m+n}(\Bbb Q_p)}$ as
  its Langlands subrepresentation. 
Therefore, the unique irreducible subrepresentation of
 $\nu^{-s/2}\circ \det 1_{GL(n, \Bbb Q_p)} \times \nu^{s/2} \circ \det
  1_{GL(m, \Bbb Q_p)}$ is  $\nu^{(m-n)/4}1_{GL_{m+n}(\Bbb Q_p)}$.
\end{proof}

\begin{Cor}\label{proof-main-res-2-1-c} The intertwining space between 
$\nu^{s/2}\circ \det 1_{GL(m, \Bbb Q_p)} \times \nu^{-s/2} \circ \det
  1_{GL(n, \Bbb Q_p)}$ and $\nu^{-s/2}\circ \det 1_{GL(n, \Bbb Q_p)} \times \nu^{s/2} \circ \det
  1_{GL(m, \Bbb Q_p)}$ is one dimensional. A non---zero intertwining
  operator must take the irreducible quotient of the first onto the
  irreducible subrepresentation of the second.
\end{Cor}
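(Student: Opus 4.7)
The plan is to separate the two cases identified in Lemma~\ref{proof-main-res-2-1}. Write $I_1=\nu^{s/2}\circ\det 1_{GL(m,\Bbb Q_p)}\times\nu^{-s/2}\circ\det 1_{GL(n,\Bbb Q_p)}$ and $I_2=\nu^{-s/2}\circ\det 1_{GL(n,\Bbb Q_p)}\times\nu^{s/2}\circ\det 1_{GL(m,\Bbb Q_p)}$. In the easy range $m\le\alpha<n$, Lemma~\ref{proof-main-res-2-1} says both $I_1$ and $I_2$ are irreducible, and Lemma~\ref{proof-main-res-2-0} says their semi-simplifications coincide; hence $I_1\cong I_2$, Schur's lemma gives $\dim\Hom(I_1,I_2)=1$, and the second assertion is vacuous since each of $I_1$, $I_2$ coincides with the unique irreducible subquotient~$\sigma$.

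For the remaining range $0\le\alpha\le m-1$, let $\sigma$ be the common spherical irreducible subquotient. By Lemma~\ref{proof-main-res-2-1}, $\sigma$ is the unique irreducible quotient of $I_1$ (call the radical $M\subset I_1$) and the unique irreducible subrepresentation of $I_2$. The additional input I need is that $\sigma$ appears with multiplicity one in the Jordan--H\"older series of $I_1$ (equivalently of $I_2$), which I would read off from Zelevinsky's multi-segment classification \cite{Z} at the finite places and from the explicit composition series in Howe--Lee \cite{HL} at the archimedean place. Granting this, take any non-zero $T\in\Hom(I_1,I_2)$. The image $T(I_1)$ is a non-zero subrepresentation of $I_2$, so it contains the socle~$\sigma$. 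Suppose for contradiction that $T(M)\ne 0$; then the same argument applied to $T|_M$ produces an embedding $\sigma\hookrightarrow T(M)$, while the non-zero quotient $T(I_1)/T(M)$ is a quotient of $I_1/M\cong\sigma$, so is itself~$\sigma$. But then $\sigma$ occurs at least twice in the Jordan--H\"older series of $T(I_1)\subset I_2$, contradicting multiplicity one. Therefore $T(M)=0$ and $T$ factors as $I_1\twoheadrightarrow I_1/M\cong\sigma\hookrightarrow I_2$, where both the surjection and inclusion are canonical up to a scalar. This gives $\dim\Hom(I_1,I_2)=1$ and simultaneously shows that the image of any non-zero $T$ equals the unique irreducible subrepresentation of $I_2$ (namely $\sigma$), which is the image of the unique irreducible quotient of $I_1$, proving the second assertion.

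The main (and essentially only) obstacle is verifying the multiplicity-one statement for $\sigma$; everything else is a formal socle-cosocle argument. At finite places this is immediate from the fact that distinct Zelevinsky multi-segments give rise to distinct irreducible constituents in the composition series of $I_1$; at $\infty$ it follows from the explicit multiplicity-free decomposition stated in (\cite{HL}, Theorems 3.4.2 and 3.4.4). If any edge case proved delicate, a backup would be to exhibit a non-zero $T$ directly (the standard intertwining operator, after appropriate normalization as in Section~\ref{de}) and to compute $\Hom$ via Jacquet modules at finite places or $K_\infty$-type multiplicities at infinity, both of which are one-dimensional for $\sigma$.
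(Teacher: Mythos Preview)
Your argument is correct and is precisely the content the paper compresses into the single line ``This follows immediately from the first claim of Lemma~\ref{proof-main-res-2-1}''; the multiplicity-one input you flag (from \cite{Z} and \cite{HL}) is indeed what makes that derivation go through. One small imprecision: you assert that $T(I_1)/T(M)$ is non-zero, but a priori it could vanish; the cleaner route avoids this case split entirely, since $T(M)\neq 0$ already forces $\sigma\subseteq T(M)$ (socle of $I_2$), hence $\sigma$ is a subquotient of $M$, contradicting multiplicity one directly.
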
 
\begin{proof} This follows immediately from the first claim of Lemma \ref{proof-main-res-2-1}.
\end{proof}

\begin{Lem}\label{proof-main-res-2-2} The local induced  representation $\nu^{s/2}\circ \det 1_{GL(m, \Bbb
   Q_p)} \times \nu^{-s/2} \circ \det
  1_{GL(n, \Bbb Q_p)}$  is generated by its unique unramified vector $f_p$ (see
  Section \ref{de}) for all
 $p\le \infty$. 
\end{Lem}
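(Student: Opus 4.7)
The plan is to use a cyclicity argument based on two inputs: that the $K_p$-fixed subspace of $V = \nu^{s/2}\circ \det 1_{GL(m,\Bbb Q_p)} \times \nu^{-s/2}\circ \det 1_{GL(n,\Bbb Q_p)}$ is one-dimensional, and that by Lemma \ref{proof-main-res-2-1} any irreducible quotient of $V$ must be the spherical component. Combining these two facts prevents a proper submodule from containing the spherical vector.

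First, I would verify that $\dim V^{K_p} = 1$. This is classical: by the Iwasawa decomposition $GL_{m+n}(\Bbb Q_p) = P_{m,n}(\Bbb Q_p) K_p$, any $K_p$-invariant $f \in V$ is determined by its value at $I_{m+n}$, since $P_{m,n}(\Bbb Q_p)\cap K_p$ has trivial image under the inducing character $|\det|^{s/2}\otimes |\det|^{-s/2}$. The spherical vector $f_p$ (as defined in Section \ref{de}) provides a generator. Note that the case $m \le \alpha < n$ is immediate from Lemma \ref{proof-main-res-2-1}, which asserts that $V$ is irreducible, so any non-zero vector (in particular $f_p$) generates $V$. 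Thus I need only treat the reducible cases $0\le \alpha \le m-1$.

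Let $W \subset V$ be the submodule generated by $f_p$, interpreted as a $GL_{m+n}(\Bbb Q_p)$-subrepresentation for $p < \infty$ and as a $(\mathfrak{gl}_{m+n}, K_\infty)$-submodule for $p = \infty$. Suppose for contradiction that $W \neq V$. Since $f_p \in W$ and $V^{K_p}$ is one-dimensional, we have $(V/W)^{K_p} = 0$. On the other hand, $V$ has finite length (by Zelevinsky's classification \cite{Z} in the $p$-adic case and by Howe--Lee \cite{HL} in the archimedean case), hence $V/W$ is non-zero of finite length and admits an irreducible quotient $\pi$. Then $\pi$ is an irreducible quotient of $V$ itself, so by Lemma \ref{proof-main-res-2-1} it must be the spherical component. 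But $\pi$ is also a quotient of $V/W$, forcing $\pi^{K_p} = 0$, which contradicts sphericalness.

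The main step requiring care is the archimedean case: one must be consistent about working in the category of admissible $(\mathfrak{gl}_{m+n}, K_\infty)$-modules, ensuring that the finite length statement and the unique-irreducible-quotient conclusion from Lemma \ref{proof-main-res-2-1} apply in the same category in which $W$ is formed. The $p$-adic case is a routine smooth-representation argument. Once these foundational matching issues are handled, the argument is a clean three-line contradiction driven entirely by the 1-dimensionality of $V^{K_p}$ together with the quotient statement from the previous lemma.
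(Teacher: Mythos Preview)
Your argument is correct and follows essentially the same route as the paper's proof: both use that $\dim V^{K_p}=1$ together with Lemma \ref{proof-main-res-2-1} to force the cyclic submodule generated by $f_p$ to be all of $V$, the only difference being that the paper phrases the contradiction via ``$W$ already has the spherical component as a quotient'' while you phrase it via ``$(V/W)^{K_p}=0$''. Your write-up is simply a more detailed version of the paper's two-line proof.
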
 
\begin{proof} Indeed, the module generated by $f_p$ contains unique
  spherical irreducible subquotient of 
$\nu^{s/2}\circ \det 1_{GL(m, \Bbb Q_p)} \times \nu^{-s/2} \circ \det
  1_{GL(n, \Bbb Q_p)}$ as its irreducible quotient. But applying Lemma
  \ref{proof-main-res-2-1} this module must be all induced
  representation. 
\end{proof}

\begin{Lem}\label{proof-main-res-2-3}  
Let $p\le \infty$. Then, for any $w\in W$ (the Weyl group; see Section \ref{ct}), 
$\cal N_p(\Lambda_{s,p},\tilde{w})$ is holomorphic on  
$\nu^{s/2}\circ \det 1_{GL(m, \Bbb Q_p)} \times \nu^{-s/2} \circ \det
 1_{GL(n, \Bbb Q_p)}$. Furthermore, if $w(\Lambda_s)=w'(\Lambda_s)$, then 
$\cal N_p(\Lambda_{s,p},\tilde{w})$ and $\cal
N_p(\Lambda_{s,p},\tilde{w'})$ agree on 
$\nu^{s/2}\circ \det 1_{GL(m, \Bbb Q_p)} \times \nu^{-s/2} \circ \det
  1_{GL(n, \Bbb Q_p)}$.
\end{Lem}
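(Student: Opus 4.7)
I would deduce both assertions from Lemma \ref{proof-main-res-2-2}, which says that $\pi := \nu^{s/2}\circ\det 1_{GL(m,\bbQ_p)} \times \nu^{-s/2}\circ\det 1_{GL(n,\bbQ_p)}$ is cyclic under $GL_{m+n}(\bbQ_p)$, generated by its unique unramified vector $f_p$, together with the Gindikin--Karpelevi\v c identity \eqref{de-5000000000} (and its archimedean analog), giving $\cal N_p(\Lambda_{s,p},\wit w)f_p = f_{w,p}$. In the compact picture both $f_p$ and $f_{w,p}$ are the constant function $1$ on $K$, so $\cal N_p(\Lambda_{s,p},\wit w) f_p$ is manifestly independent of $s$ and in particular holomorphic.

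For holomorphicity on $\pi$ at a candidate pole $s_0$, I would first observe that in the compact picture the underlying space $V$ of $\Ind(\Lambda_s)$ and the subspace $\pi\subset V$ are both independent of $s$, since the relevant characters are trivial on $K\cap B$ and on $K\cap P_{m,n}$ (using that $|\cdot|^s$ is trivial on $T(\bbZ_p)$ and on $\{\pm 1\}\subset T(\bbR)$). Consequently $\rho_s(g)f_p$ lies in $\pi$ for every $s$ (where $\rho_s$ denotes the $s$-dependent compact-picture action), so all Taylor coefficients $B_l(g)f_p := \frac{1}{l!}\frac{d^l}{ds^l}\big|_{s_0}\rho_s(g)f_p$ also lie in $\pi$. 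Writing $\cal N_p(\Lambda_{s,p},\wit w) = \sum_{j\ge -k}A_j(s-s_0)^j$ and comparing Laurent coefficients of negative order in the identity $\cal N_p(\Lambda_{s,p},\wit w)\rho_s(g)f_p = \rho'_s(g)f_{w,p}$ (whose right-hand side is holomorphic because $f_{w,p}$ is $s$-independent) yields the relations $\sum_{l\ge 0}A_{-r-l}(B_l(g)f_p)=0$ for every $r\ge 1$. A downward induction on $r$, starting from $r=k$ where only the $l=0$ term survives, combined with cyclicity of $f_p$ in $\pi$, then gives $A_{-r}(\rho_{s_0}(g)f_p)=0$ for all $g$ and hence $A_{-r}|_\pi = 0$ for every $r\ge 1$, which is the desired holomorphicity.

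For the second assertion, if $w(\Lambda_s)=w'(\Lambda_s)$ then the two induced representations $\Ind(w(\Lambda_s))$ and $\Ind(w'(\Lambda_s))$ literally coincide, so their unique unramified vectors agree, $f_{w,p}=f_{w',p}$, and thus $\cal N_p(\Lambda_{s,p},\wit w)f_p = \cal N_p(\Lambda_{s,p},\wit{w'})f_p$. The two operators, now known by the first part to be holomorphic on $\pi$ and $GL_{m+n}(\bbQ_p)$-equivariant, agree on the cyclic generator and hence coincide on all of $\pi$.

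The main obstacle will be the key input that each Taylor coefficient $B_l(g)f_p$ lies in $\pi$: this is what drives the induction on Laurent coefficients, and it relies on the $s$-independence of the subspace $\pi\subset V$ in the compact picture, not merely of $V$ itself. Without it, vanishing of the leading Laurent coefficient on $\pi$ would give only a one-step reduction in pole order and the induction would fail to close.
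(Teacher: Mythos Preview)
Your proposal is correct and rests on the same two inputs as the paper's proof: cyclicity of the unramified vector (Lemma~\ref{proof-main-res-2-2}) and the Gindikin--Karpelevi\v c identity~(\ref{de-5000000000}). Your treatment of the second assertion is identical to the paper's.

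For holomorphicity, however, you work harder than necessary. The paper argues in one step: if $\cal N_p(\Lambda_{s,p},\wit w)$ had a pole of exact order $k$ on $\pi$, the leading Laurent coefficient $A_{-k}$ is a \emph{genuine} intertwining operator at $s_0$ (this is standard: only the top coefficient inherits equivariance, since the group action is holomorphic in $s$). Hence $\ker A_{-k}$ is a subrepresentation of $\pi$; it contains $f_p$ by~(\ref{de-5000000000}), so by cyclicity $A_{-k}|_\pi=0$, contradicting that $k$ was the exact order. No induction on Laurent coefficients is needed, and in particular one never has to verify that the derivatives $B_l(g)f_p$ remain in $\pi$ (the ``main obstacle'' you flag). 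Your Laurent-expansion induction recovers the same conclusion but at the cost of that extra verification about the $s$-independence of $\pi$ in the compact picture---which, while true here, is avoidable.
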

\begin{proof} The second claim follows from the first, since 
$\nu^{s/2}\circ \det 1_{GL(m, \Bbb Q_p)} \times \nu^{-s/2} \circ \det
  1_{GL(n, \Bbb Q_p)}$ since is generated by its unramified vector (see Lemma
  \ref{proof-main-res-2-2}) and they agree on that unramified vector
  (see (\ref{de-5000000000})). 

The first claim is equally easy. If $\cal N_p(\Lambda_{s,p},\tilde{w})$
would have a pole on $\nu^{s/2}\circ \det 1_{GL(m, \Bbb Q_p)} \times \nu^{-s/2} \circ \det
 1_{GL(n, \Bbb Q_p)}$, then  after removing pole we obtain a non--zero intertwining  operator 
which because of  (\ref{de-5000000000}) has  spherical irreducible
quotient in its kernel. This contradicts Lemma  \ref{proof-main-res-2-2}.
\end{proof}

Let $\widetilde{w_0}$ be the representative taken by Shahidi (see
Section \ref{de}) for the Weyl group element represented by $w_0$ given
by   (\ref{rel-e-2}).

\begin{Lem}\label{proof-main-res-2-4} Let $p\le \infty$.  
Let $f_p \in \nu^{s/2}\circ \det 1_{GL(m, \Bbb Q_p)} \times \nu^{-s/2} \circ \det
 1_{GL(n, \Bbb Q_p)}$. Then, for any $w\in W$, we have the following:
$$
\cal N_p(\Lambda_{s,p},\tilde{w})f_p\neq 0 \iff 
\cal N_p(\Lambda_{s,p},\tilde{w_0})f_p\neq 0.
$$
Furthermore, the image of $\cal N_p(\Lambda_{s,p},\tilde{w})$  is
isomorphic to the unique spherical irreducible quotient of 
$\nu^{s/2}\circ \det 1_{GL(m, \Bbb Q_p)} \times \nu^{-s/2} \circ \det
 1_{GL(n, \Bbb Q_p)}$ (which is well--defined by Lemma
 \ref{proof-main-res-2-1}).
\end{Lem}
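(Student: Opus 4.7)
The strategy is to identify the kernel of each $\cal N_p(\Lambda_{s,p}, \wit{w})$ restricted to $V := \nu^{s/2}\circ \det 1_{GL(m, \Bbb Q_p)} \times \nu^{-s/2} \circ \det 1_{GL(n, \Bbb Q_p)}$ with the unique maximal proper subrepresentation $V_0 \subset V$, characterized by $V/V_0 \cong \pi$, where $\pi$ is the spherical irreducible quotient of $V$ furnished by Lemma \ref{proof-main-res-2-1}. Once this independence of $w$ is established, both conclusions of the lemma follow at once: the non-vanishing equivalence is immediate, and the image of each $\cal N_p(\Lambda_{s,p}, \wit{w})|_V$ coincides with $V/V_0 \cong \pi$ by the first isomorphism theorem.

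I would first handle the base case $w = w_0$. By induction in stages, $\cal N_p(\Lambda_{s,p}, \wit{w_0})$ carries $V$ into $V' := \nu^{-s/2}\circ \det 1_{GL(n, \Bbb Q_p)} \times \nu^{s/2}\circ \det 1_{GL(m, \Bbb Q_p)}$. By Corollary \ref{proof-main-res-2-1-c}, any nonzero intertwining map $V \to V'$ has image equal to the (unique) spherical irreducible subrepresentation $\pi$ of $V'$. Since $\cal N_p(\Lambda_{s,p}, \wit{w_0})$ is nonzero on the unramified vector by (\ref{de-5000000000}), this applies, yielding $\ker \cal N_p(\Lambda_{s,p}, \wit{w_0})|_V = V_0$ and identifying the image with $\pi$.

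For general $w$ I would invoke the cocycle property for normalized intertwining operators. Writing $w_0 = (w_0 w^{-1}) \cdot w$, the identity
\[
\cal N_p(\Lambda_{s,p}, \wit{w_0}) \;=\; \cal N_p(w\Lambda_{s,p}, \wit{w_0 w^{-1}}) \circ \cal N_p(\Lambda_{s,p}, \wit{w})
\]
forces $\ker \cal N_p(\Lambda_{s,p}, \wit{w})|_V \subseteq \ker \cal N_p(\Lambda_{s,p}, \wit{w_0})|_V = V_0$. The reverse containment comes from the symmetric factorization
\[
\cal N_p(\Lambda_{s,p}, \wit{w}) \;=\; \cal N_p(w_0\Lambda_{s,p}, \wit{w w_0^{-1}}) \circ \cal N_p(\Lambda_{s,p}, \wit{w_0}),
\]
which gives $V_0 \subseteq \ker \cal N_p(\Lambda_{s,p}, \wit{w})|_V$. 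Combining, $\ker \cal N_p(\Lambda_{s,p}, \wit{w})|_V = V_0$ for every $w$, completing the argument.

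The main obstacle is to rigorously justify the cocycle identity at our specific value of $s$. Lemma \ref{proof-main-res-2-3} supplies holomorphicity of each $\cal N_p(\Lambda_{s,p}, \wit{w})|_V$, but the intermediate operator $\cal N_p(w\Lambda_{s,p}, \wit{w_0 w^{-1}})$ is applied on the image of the first operator, which sits in a different induced representation, and its holomorphicity there must be handled separately. The cleanest route is to treat the cocycle as an equality of meromorphic families in $s$, valid generically, and then to deduce from holomorphicity of the two flanking operators at our specialization point that the intermediate composition is automatically holomorphic there. The auxiliary observation that $w_0$ is the longest element of the set (\ref{ct-1}) (with length $mn$), so that length-additive decompositions through $w_0$ are available, makes the cocycle apply in its cleanest form.
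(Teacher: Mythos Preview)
Your overall strategy---identify the kernel of every $\cal N_p(\Lambda_{s,p},\wit{w})|_V$ with the unique maximal proper submodule $V_0$---is exactly right, and the base case $w=w_0$ is handled as in the paper. The gap lies in how you justify the cocycle factorizations at the specific point $s$. The sentence ``deduce from holomorphicity of the two flanking operators\dots\ that the intermediate composition is automatically holomorphic'' is not a valid inference: if $A(s)=B(s)\circ C(s)$ generically with $A,C$ holomorphic at $s_0$, the operator $B$ may still have a pole at $s_0$, and then one cannot conclude $\ker C(s_0)\subseteq \ker A(s_0)$. (A pole of $B$ could resurrect a nonzero value of $A(s_0)f$ from a first-order zero of $C(s)f_s$.) The remark about length-additivity does not help either: the lemma is stated for arbitrary $w\in W$, for which neither $w_0=(w_0w^{-1})w$ nor $w=(ww_0^{-1})w_0$ is length-additive in general; one must invoke the full cocycle for normalized operators, which holds without length restrictions but is an identity of meromorphic families and so still leaves the specialization issue open.

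The paper closes this gap differently and needs only the \emph{second} factorization $\cal N_p(\Lambda_{s,p},\wit{w})=\cal N_p(w_0\Lambda_{s,p},\widetilde{ww_0^{-1}})\circ \cal N_p(\Lambda_{s,p},\wit{w_0})$. The image of $\cal N_p(\Lambda_{s,p},\wit{w_0})|_V$ is the irreducible spherical $\pi$, and since $\pi$ is generated by its unramified vector while the intermediate operator sends that vector to another unramified vector by (\ref{de-5000000000}), the intermediate operator is both holomorphic and nonzero on $\pi$---hence an isomorphism onto its image. This single factorization then yields simultaneously $\ker \cal N_p(\Lambda_{s,p},\wit{w})|_V=V_0$ and the identification of the image with $\pi$, making your first factorization unnecessary. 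If you prefer to keep the symmetric two-factorization argument, the same ``generated by the unramified vector'' reasoning (applied now to the image of $\cal N_p(\Lambda_{s,p},\wit{w})|_V$, which is a quotient of $V$ and hence also so generated by Lemma~\ref{proof-main-res-2-2}) would repair your first factorization as well.
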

\begin{proof} As we proved in Lemma \ref{proof-main-res-2-3}, all 
$\cal N_p(\Lambda_{s,p},\tilde{w})$ are holomorphic on  
$\nu^{s/2}\circ \det 1_{GL(m, \Bbb Q_p)} \times \nu^{-s/2} \circ \det
 1_{GL(n, \Bbb Q_p)}$. The intertwining operator 
$\cal N_p(\Lambda_{s,p},\tilde{w_0})$ is from the space discussed in 
Corollary \ref{proof-main-res-2-1-c}. Therefore, its image is
isomorphic to the unique spherical irreducible quotient of 
$\nu^{s/2}\circ \det 1_{GL(m, \Bbb Q_p)} \times \nu^{-s/2} \circ \det
 1_{GL(n, \Bbb Q_p)}$. Next, we use the following formula that follows 
from the normalization procedure \cite{Sh2}:
$$\cal N_p(\Lambda_{s,p},\tilde{w})=
\cal N_p(\Lambda_{s,p},\widetilde{ww_0^{-1}}) 
\cal N_p(\Lambda_{s,p},\widetilde{w_0}).
$$
As in the previous lemma, we use the property
(\ref{de-5000000000}). The unramified function of the induced representation
$\Ind_{T(\Bbb Q_p)U(\Bbb Q_p)}^{GL_{m+n}(\Bbb Q_p)}(w_0(\Lambda_s))$
belongs to the image of $\cal N_p(\Lambda_{s,p},\tilde{w_0})$ since
it is spherical representation. But then  (\ref{de-5000000000})
implies that $\cal N_p(\Lambda_{s,p},\widetilde{ww_0^{-1}})$ is not
identically equal zero on the image of $\cal
N_p(\Lambda_{s,p},\tilde{w_0})$. It also can not have a pole on that
image because of the same reason. So, above equality of normalized
intertwining operator tells us that all of them have isomorphic
images. Also, the first claim of the lemma follows because of the same 
reason.
\end{proof}

Assume that $\alpha\ge 1$ and $\alpha<n.$ Then, Lemma \ref{proof-main-res-2-3} 
says that we can write down  (\ref{de-3}) as
\begin{equation}
\label{eq:grouping}
E_{const}(s,f)(g)=\sum_w\left (\sum_{w'\in [w]}r(\Lambda_s,w')^{-1}
\right )\otimes_{p\in S}\mathcal{N}(\Lambda_{s,p},\tilde{w})f_p\otimes
(\otimes_{p\notin S}f_{w,p}),
\end{equation}
where the first sum is over some set of representatives of orbits (see
Section \ref{ct3000}) for
the 
set $w\in W, \;w(\Delta\setminus \{\beta\})>0$.

\begin{Lem} \label{proof-main-res-2-5} Assume that $(m+n)/2>\alpha\ge m.$
  Then, the global representation $\nu^{s/2}\circ \det 1_{GL(m, \Bbb
    A)} \times \nu^{-s/2} \circ \det 1_{GL(n, \Bbb A)}$  is irreducible. Moreover, $E(s, \cdot )$ is holomorphic
  and, consequently,  the map (\ref{de-100}) is an embedding.
\end{Lem}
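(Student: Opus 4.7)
The plan proceeds in three movements. \emph{Irreducibility.} The hypothesis $m \le \alpha < (m+n)/2$ combined with $m \le n$ forces $m < n$ and therefore $m \le \alpha < n$. By the second statement of Lemma \ref{proof-main-res-2-1}, each local factor $\nu^{s/2}\circ \det 1_{GL(m,\Bbb Q_p)} \times \nu^{-s/2}\circ \det 1_{GL(n,\Bbb Q_p)}$ is then irreducible for every $p \le \infty$, so the global representation, a restricted tensor product of irreducible admissible local pieces, is irreducible.

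\emph{Holomorphicity.} I would reduce to the constant term via Lemma \ref{lem:const0}: if $E(s, \cdot)$ had a pole of order $l \ge 1$ at $s_0 = (m+n)/2 - \alpha$, then for some $f$ the form $\varphi = (s-s_0)^l E(s, f)|_{s=s_0}$ is nonzero and its constant term $\varphi_{const} = (s-s_0)^l E_{const}(s, f)|_{s=s_0}$ must be nonzero. So it is enough to show that $E_{const}(s, f)$ is holomorphic at $s_0$. Starting from (\ref{de-3}) I would group summands by orbits as in (\ref{eq:grouping}); this grouping is legitimate because, by Lemma \ref{proof-main-res-2-3}, each $\cal N(\Lambda_{s_0, p}, \widetilde{w})$ is holomorphic on our induced representation and depends only on the orbit $[w]$. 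Each orbit summand then factors as $\bigl(\sum_{w' \in [w]} r(\Lambda_s, w')^{-1}\bigr)$ times a holomorphic tensor of local intertwining operators (spherical at unramified places). Since $\alpha \ge m$ yields $\alpha + 1 > m$, Lemma \ref{lem:polestogether} guarantees that each orbit sum of normalizing factors is holomorphic at $s_0$, so $E_{const}$, and therefore $E$, is holomorphic at $s_0$.

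\emph{Embedding, and the main difficulty.} Since the source is irreducible, the map $f \mapsto E(s_0, f)$ of (\ref{de-100}) is either zero or injective. To rule out zero I would examine the identity-orbit contribution to $E_{const}(s_0, f)$, which is $r(\Lambda_{s_0}, id)^{-1}\cdot f = f$ itself. Since $s_0 > 0$, Lemma \ref{ct-2} says the identity is the unique element of the set (\ref{ct-1}) fixing $\Lambda_{s_0}$, so the other orbit contributions transform under $T(\Bbb A)$ by characters different from $\Lambda_{s_0}$ and are linearly independent of $f$. Thus $E_{const}(s_0, f) \ne 0$ for any nonzero $f$, and Lemma \ref{lem:const0} (with $l=0$) yields $E(s_0, f) \ne 0$. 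The main obstacle is organizational rather than technical: individual factors $r(\Lambda_{s_0}, w)^{-1}$ do have poles in this range, so one cannot argue termwise; the key is to bundle Weyl elements into orbits and use Lemma \ref{proof-main-res-2-3} to treat the local operators as orbit-invariant, at which point the cancellation promised by Lemma \ref{lem:polestogether} becomes visible.
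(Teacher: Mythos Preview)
Your proposal is correct and follows essentially the same route as the paper: irreducibility via Lemma \ref{proof-main-res-2-1}, holomorphicity by grouping the constant term into orbits (\ref{eq:grouping}) and invoking Lemma \ref{lem:polestogether} (your observation that $\alpha\ge m$ forces $\alpha+1>m$ is exactly what is needed), and nonvanishing via the identity-orbit term together with linear independence of the orbit contributions and Lemma \ref{lem:const0}. The paper phrases the last step slightly differently, citing Lemma \ref{proof-main-res-2-4} and noting that contributions from distinct orbits cannot cancel, but the substance is the same.
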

\begin{proof} The irreducibility of the induced representation follows
from Lemma  \ref{proof-main-res-2-1}. We prove that $E(s, \cdot )$ is holomorphic.
By (\ref{eq:grouping}) and Lemma  \ref{proof-main-res-2-3}, it is
enough to show that all  sums $\sum_{w'\in [w]}r(\Lambda_s,w')^{-1}$
are holomorphic. But according to Lemma \ref{lem:polestogether},
$\sum_{w'\in [w]}r(\Lambda_s,w')^{-1}$ is 
holomorphic for all the orbits  and non-zero for some (e.g., for $w$
representing the orbit $W_{\alpha}$ of 
Weyl group elements where the poles of the maximal order of the
normalizing factors are achieved (see Lemma
\ref{lem:norm_factors_maximal}), or for $w=id$). Note that
contributions from the different orbits cannot cancel.
Finally, by (\ref{eq:grouping}), combined with  Lemmas
\ref{proof-main-res-2-4} and \ref{lem:const0}, we see that the map
(\ref{de-100}) has required image.
\end{proof}

\begin{Lem} \label{proof-main-res-2-6}
Assume that $1\le \alpha\le m-1.$ Then, the global representation  
$$
\tau\overset{def}{=}\nu^{-\frac{n-m}{4}}\circ \det 1_{GL(m+n-\alpha,
  \Bbb A)}
\times \nu^{\frac{n-m}{4}}\circ \det 1_{GL(\alpha, \Bbb A)}.
$$
is  irreducible; it is an quotient of the global representation 
$$
\pi\overset{def}{=} \nu^{s/2}\circ \det 1_{GL(m, \Bbb A)} \times \nu^{-s/2} \circ \det
  1_{GL(n, \Bbb A)}.
$$
Moreover, $E(s, \cdot )$ has a simple pole at on $\pi$, and the map
(\ref{de-100}) has image $\tau$. 
\end{Lem}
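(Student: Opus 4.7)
The plan is to establish four assertions of the lemma in succession: irreducibility of $\tau$, that $\tau$ is a quotient of $\pi$, that $E(s,\cdot)$ has at most a simple pole at $s_0=(m+n)/2-\alpha$, and that the (nonzero) residue map factors through $\tau$.

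First I would analyze $\pi_p$ and $\tau_p$ at each place $p\le\infty$. The two character segments underlying $\pi_p=\nu^{s_0/2}\circ\det\, 1_{GL(m,\Bbb Q_p)}\times\nu^{-s_0/2}\circ\det\, 1_{GL(n,\Bbb Q_p)}$ overlap in a central block of length $\alpha$ (since $1\le\alpha\le m-1$), and their union is a single segment of length $m+n-\alpha$. The standard segment surgery of Zelevinsky replaces this pair by (union, intersection), which produces exactly the two segments of $\tau_p$; these new segments are unlinked since one is contained in the other. Zelevinsky's irreducibility criterion \cite{Z} at finite places, combined with the analogous Howe--Lee result \cite{HL} at the archimedean place, shows that $\tau_p$ is irreducible for every $p$ and equals the unique irreducible quotient of $\pi_p$. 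Taking the restricted tensor product, $\tau$ is globally irreducible and a quotient of $\pi$.

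Next I would bound the pole order using the constant-term formula (\ref{eq:grouping}). By Lemma~\ref{proof-main-res-2-3} every local normalized intertwining operator $\cal N(\Lambda_{s,p},\wit w)$ is holomorphic on $\pi_p$ at $s=s_0$, and by Lemma~\ref{lem:polestogether} each orbit sum $\sum_{w'\in[w]}r(\Lambda_s,w')^{-1}$ has at most a simple pole at $s_0$. Therefore $E_{const}(s,\cdot)$ has at most a simple pole on $\pi$, and Lemma~\ref{lem:const0} transfers this bound to $E(s,\cdot)$ itself. To identify the residue, observe that within each orbit $[w]$ all operators $\cal N(\Lambda_{s_0,p},\wit{w'})$, $w'\in[w]$, agree on $\pi_p$ by Lemma~\ref{proof-main-res-2-3}, and by Lemma~\ref{proof-main-res-2-4} their common image at $p$ is $\tau_p$. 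Thus $((s-s_0)E(s,f))|_{s=s_0}$ lies in $\tau$, and irreducibility of $\tau$ will force the image to be all of $\tau$ once I show the residue is nonzero.

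For the nonvanishing, contributions from different orbits live in different Borel-induced representations $\Ind_{T(\Bbb A)U(\Bbb A)}^{GL_{m+n}(\Bbb A)}(w(\Lambda_{s_0}))$ and hence cannot cancel against each other. By Remark~\ref{very--important} the orbit of $w_0$ survives the pole cancellation with a nonzero residue factor, and for any $f$ with $\cal N(\Lambda_{s_0,p},\wit{w_0})f_p\neq 0$ at every $p$ (e.g.\ spherical $f$, via (\ref{de-5000000000})) this $w_0$-contribution is nonzero. I expect the main difficulty to be precisely this non-cancellation/non-vanishing step, where the delicate orbit-sum cancellations of Section~\ref{ct3000} must interact cleanly with the local rigidity statements of Lemmas~\ref{proof-main-res-2-3} and \ref{proof-main-res-2-4} so as to isolate the $w_0$-contribution, which in turn pins the image of the residue down to $\tau$.
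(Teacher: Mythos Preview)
Your proposal is correct and follows essentially the same strategy as the paper: local irreducibility and quotient identification place by place, then the orbit-grouped constant term (\ref{eq:grouping}) together with Lemmas \ref{proof-main-res-2-3}, \ref{lem:polestogether}, \ref{proof-main-res-2-4}, and \ref{lem:const0}. The only differences are cosmetic: the paper invokes Lemma \ref{proof-main-res-2-1} rather than spelling out the Zelevinsky (union, intersection) segment surgery you describe, and it exhibits the simple pole via the $W_\alpha$ orbit (Lemma \ref{lem:norm_factors_maximal}) rather than the $w_0$ orbit you cite from Remark \ref{very--important}; either choice works.
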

\begin{proof} The induced representation 
$\tau_p$ is irreducible by (\cite{HL}, Theorems 3.4.2 and 3.4.4) for
$p=\infty$, and by  (\cite{Z}, Proposition 4.6) for $p<\infty$. Hence, 
$\tau=\otimes_{p\le\infty \infty} \tau_p$ is irreducible. Next,  Lemma
\ref{proof-main-res-2-1} implies that $\tau_p$ is a quotient of
$\pi_p$ for all $p$. Thus, $\tau$ is a quotient of $\pi$. 

Again, by (\ref{eq:grouping}) and Lemma  \ref{proof-main-res-2-3}, it is
enough to exhibit poles of the   sums $\sum_{w'\in
  [w]}r(\Lambda_s,w')^{-1}$. 
There exists $w$ from (\ref{ct-1}) such that $\sum_{w'\in
  [w]}r(\Lambda_{s+t},w')^{-1}$ has a simple pole (e. g., $w\in W_{\alpha}$ will  do; see Lemma
\ref{lem:norm_factors_maximal}). All other sums have at most simple
pole (see Lemma \ref{lem:polestogether}). 

Finally, by (\ref{eq:grouping}), combined with  Lemmas
\ref{proof-main-res-2-4} and \ref{lem:const0}, we see that the the map
(\ref{de-100}) has image $\tau$.
\end{proof}

Finally, we consider the following lemma:

\begin{Lem} \label{proof-main-res-2-7}
Assume $\alpha=0.$  Then, the image of the Eisenstein series is a
  realization of the global character  $\nu^{\frac{m-n}{4}}\circ
  \det  1_{GL_{n+m}}(\Bbb A)$.
\end{Lem}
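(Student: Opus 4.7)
My plan is to set $s_0=(m+n)/2$ and compute the residue $R(f):=(s-s_0)E(s,f)|_{s=s_0}$, showing that it is nonzero and has image equal to the one-dimensional global character $\tau:=\nu^{(m-n)/4}\circ\det 1_{GL_{m+n}(\Bbb A)}$. The first input is that by Lemma \ref{proof-main-res-2-1}, at $\alpha=0$ the unique irreducible (spherical) quotient of the local factor $\pi_p:=\nu^{s_0/2}\circ\det 1_{GL(m,\Bbb Q_p)}\times \nu^{-s_0/2}\circ\det 1_{GL(n,\Bbb Q_p)}$ is the local character $\nu^{(m-n)/4}1_{GL_{m+n}(\Bbb Q_p)}$; hence the unique spherical quotient of $\pi=\otimes_p\pi_p$ is precisely $\tau$.

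First I would apply the grouped constant-term formula (\ref{eq:grouping}) together with Lemma \ref{proof-main-res-2-3}. By Lemma \ref{lem:polestogether} every orbit sum $\sum_{w'\in[w]}r(\Lambda_s,w')^{-1}$ has at most a simple pole at $s_0$, so $R_{const}(f):=(s-s_0)E_{const}(s,f)|_{s=s_0}$ is holomorphic; moreover the orbit $W_0=\{w_0\}$ does contribute a genuine simple pole by Lemma \ref{lem:norm_factors_maximal}(2) specialized to $\alpha=0$, where the sum equals $B(t)L(t+1,1)/A(t)$. By Lemma \ref{lem:const0} it then suffices to analyze $R_{const}(f)$, since $R_{const}(f)=0$ would force $R(f)=0$.

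Next, for each orbit $[w]$ whose sum has a simple pole at $s_0$, Lemma \ref{proof-main-res-2-4} identifies the image of $\mathcal{N}(\Lambda_{s_0,p},\wit{w})$ on $\pi_p$ with the spherical quotient, which in our case is the local character $\nu^{(m-n)/4}1_{GL_{m+n}(\Bbb Q_p)}$; combined with (\ref{de-5000000000}) at unramified places, this forces $R_{const}(f)$ to take values in the one-dimensional space spanned by $\tau$ along the Borel. Because $\pi$ is generated by its spherical vector (Lemma \ref{proof-main-res-2-2}) and $R$ is an intertwining map into automorphic forms that factors through the unique spherical quotient $\tau$, the image of $R$ is contained in $\Bbb C\cdot\tau$. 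For non-vanishing, I would take $f=\otimes_p f^0_p$ fully spherical: by (\ref{de-5000000000}) $\mathcal{N}(\Lambda_{s_0,p},\wit{w_0})f^0_p=f^0_{w_0,p}\neq 0$ at every place, so Corollary \ref{cor-main-res-2} applied with $\alpha=0$ yields $R(f^0)\neq 0$, so the image is exactly $\tau$.

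The main obstacle is justifying that $R(f)$ actually lies in $\Bbb C\cdot \tau$ rather than in some larger automorphic subspace with the same Borel constant term. This is where the local analysis of Lemma \ref{proof-main-res-2-4} (which pins down the image of every relevant normalized intertwiner as the local character) is essential, and one must carefully combine it with Lemma \ref{lem:const0} and the generation of $\pi$ by its spherical vector (Lemma \ref{proof-main-res-2-2}) to pass from the constant-term statement to the global one. The non-vanishing at the spherical vector, together with Gindikin--Karpelevi\v c (\ref{de-5000000000}), is what guarantees that the image is the full character and not merely zero.
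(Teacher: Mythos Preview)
Your argument is correct in outline, but it takes a heavier route than the paper and contains one circular citation.

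The paper observes directly, via Example~\ref{ct3004} and Corollary~\ref{ct-8}, that for $\alpha=0$ the set $W_\alpha$ is the singleton $\{w_0\}$ and the maximal pole order is $b_0=\min(m,1)=1$. Hence $r(\Lambda_s,w)^{-1}$ is \emph{holomorphic} for every $w\neq w_0$ in the set (\ref{ct-1}), while $r(\Lambda_s,w_0)^{-1}$ has a simple pole. So the residue of the constant term consists of a single term, the $w_0$ contribution, and Lemmas~\ref{proof-main-res-2-1}, \ref{proof-main-res-2-4}, and~\ref{lem:const0} finish at once. You instead pass through the orbit machinery (Lemmas~\ref{lem:polestogether} and~\ref{lem:norm_factors_maximal}), which was built precisely for $\alpha\ge 1$ where several $w$'s carry poles and genuine cancellation is needed; at $\alpha=0$ every orbit is a singleton (since $s+(m+n)/2=m+n$ in Lemma~\ref{ct-200} forces $I=\emptyset$) and only one of them has a pole, so none of this is necessary. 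Your approach still works, but the sentence ``this forces $R_{const}(f)$ to take values in the one-dimensional space spanned by $\tau$ along the Borel'' is imprecise: the terms for different pole-carrying $w$ would sit in different $\Ind(w(\Lambda_{s_0}))$, so what you really use is that each such term factors through the spherical quotient, giving $\ker(\pi\to\tau)\subset\ker R$.

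On non-vanishing: you invoke Corollary~\ref{cor-main-res-2}, but the proof of that corollary explicitly cites Lemma~\ref{proof-main-res-2-7} among its ingredients, so this is circular. The fix is immediate: for the fully spherical $f^0$ the $w_0$-term of $R_{const}$ is a nonzero scalar times $\otimes_p f^0_{w_0,p}\neq 0$ by (\ref{de-5000000000}), and since contributions from distinct orbits are linearly independent, $R_{const}(f^0)\neq 0$. In the paper's approach this step is trivial because only the $w_0$ term survives.
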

\begin{proof} Since $\alpha=0$ we know, by Example \ref{ct3004}, that
  $W_{\alpha}$ is singleton, say,  
$W_{\alpha}=\{w_0\},$ $r(\Lambda_s,w_0)$ has a simple pole, 
 and for all other $w\in W,\; w(\Delta\setminus\{\beta\})>0$, $r(\Lambda_s,w)$ is holomorphic.
Thus, after cancellation of poles the constant term is of the form:
$$
\left(\lim_{s\to
  \frac{m+n}{2}}(s-\frac{m+n}{2})r(\Lambda_s,w_0)^{-1}\right) \otimes_{p\in
  S}\cal{N}(\Lambda_s,\widetilde{w_0})f_p \otimes \left(\otimes_{p\notin
  S}f_{w_0,p}\right).
$$
Now, we apply Lemmas \ref{proof-main-res-2-1},
\ref{proof-main-res-2-4}, and \ref{lem:const0}.
\end{proof}

Theorem \ref{main-res-2} is consequence of Lemmas
\ref{proof-main-res-2-5}, \ref{proof-main-res-2-6}, and \ref{proof-main-res-2-7}.

\section{Proof of Theorem \ref{main-res-4}}\label{aaaaaaaaaaaaaaaaaaaaa}

The proof of Theorem \ref{main-res-4} is similar to the proof of 
Theorem \ref{main-res-1} (see Section \ref{ct}). We sketch the
argument.  First, the degenerate Eisenstein
  series (\ref{de-1}) has constant term given by (\ref{de-3}). In this
  expression all $r(\Lambda_{s} , w)^{-1}$ are holomorphic at
  indicated points $s$ by Lemma \ref{ct-6}. Now, if all local normalized
  intertwining  operators in the expression (\ref{de-3}) are
  holomorphic at indicated points $s$, we are done by Lemma \ref{ct-2}
  exactly in the same way we concluded the proof of Theorem
  \ref{main-res-1}. If some of them have a pole then, we will show that
  global induced representation is irreducible, and we are again done
  by the argument used in the proof of Lemma \ref{proof-main-res-2-5}.

 Let us fix $p\le \infty$. Let us examine when the local normalized normalized intertwining
operator $\cal N(\Lambda_{s, p} , \wit{w})$ could have poles, where
$w$ in the set given by (\ref{ct-1}). 
If for such $p$ we have $\chi_p=\mu_p$, then as usual we
can assume that it is trivial and the holomorphy of $\cal
N(\Lambda_{s, p} , \wit{w})$  is a part of Lemma
\ref{proof-main-res-2-3}. Let us assume $\chi_p\neq \mu_p$. Then, if
$p< \infty$, then in the decomposition of $\cal N(\Lambda_{s, p} , \wit{w})$ 
operator into rank--one normalized intertwining operators as in the
proof of Lemma \ref{proof-main-res-1} (see (\ref{proof-main-res-1})) 
every operator is holomorphic by the argument used in the proof of 
 Lemma \ref{proof-main-res-1}. Hence, $\cal N(\Lambda_{s, p} ,
 \wit{w})$  itself is holomorphic.

Now, we assume that $p< \infty$. In this case we again look at the
decomposition into rank--one operators (\ref{proof-main-res-1}). 
A typical operator (\ref{proof-main-res-1}) is holomorphic if 
$\frac{s-(m-1)}{2}+i\ge  \frac{-s+(n-1)}{2}-j$ by the general property of
the normalization (for standard representations and parameters in the
closure of positive Weyl chamber). If $\frac{s-(m-1)}{2}+i<
\frac{-s+(n-1)}{2}-j$, the operator is still holomorphic unless the induced
representation is reducible as we explain in the proof of Lemma
\ref{proof-main-res-1}. Since the induced representation is for
$GL_2(\mathbb R)$ the reducibility is easy to establish. Using the
fact that  $\chi_\infty\neq \mu_\infty$, a
necessary  and sufficient condition is that 
\begin{equation}\label{dldldldl}
\begin{aligned}
&\text{$\chi_\infty\mu^{-1}_\infty$ is a
sign character, and}\\ 
&\text{$\frac{-s+(n-1)}{2}-j- \left(\frac{s-(m-1)}{2}+i)\right)>0$   is an
even integer.}
\end{aligned}
\end{equation}

If we have the operator
(\ref{proof-main-res-1}) in the decomposition of $\cal N(\Lambda_{s, \infty} ,
 \wit{w})$, then we also have the operators for the same
$j$ but for all $i\le k\le m-1$ 
$$
\Ind^{GL(2)}\left(\chi_\infty |\ |_\infty^{\frac{s-(m-1)}{2}+k}\otimes \mu_\infty|\
  |_\infty^{\frac{-s+(n-1)}{2}-j}\right)
\longrightarrow 
\Ind^{GL(2)}\left( \mu_\infty |\
  |_\infty^{\frac{-s+(n-1)}{2}-j} \otimes \chi_\infty |\
  |_\infty^{\frac{s-(m-1)}{2}+i}\right).
$$
Since, we are actually interested in $\cal N(\Lambda_{s, \infty} ,
 \wit{w})$ not on the full principal series but on 
\begin{equation}\label{dldldldl-2}
\Ind_{P_{m, n}(\Bbb Q_\infty)}^{GL_{m+n}(\Bbb Q_\infty)}\left(|\det |_\infty^{s/2}
 \chi_\infty \otimes |\det |_\infty^{-s/2}\mu_\infty\right),
\end{equation}
we, as it is usual \cite{MW}, can  replace the collection of such operators with a
single operator
\begin{multline}\label{dldldldl-1}
J\left(\chi_\infty |\ |_\infty^{\frac{s-(m-1)}{2}+i}, \ldots, \chi_\infty |\
|_\infty^{\frac{s+(m-1)}{2}}\right)\times \mu_\infty|\
  |_\infty^{\frac{-s+(n-1)}{2}-j} \\
\longrightarrow \mu_\infty|\
  |_\infty^{\frac{-s+(n-1)}{2}-j} \times
J\left(\chi_\infty |\ |_\infty^{\frac{s-(m-1)}{2}+i}, \ldots, \chi_\infty |\
|_\infty^{\frac{s+(m-1)}{2}}\right),
\end{multline}
where $J(\ldots )$ is a character given as a Langlands
subrepresentation of the principal series 
$\chi_\infty |\ |_\infty^{\frac{s-(m-1)}{2}+i}\times \cdots \times \chi_\infty |\
|_\infty^{\frac{s+(m-1)}{2}}$ of $GL_{m-i}(\mathbb R)$. 

Applying (\ref{dldldldl}), we see that there two possibilities. First,
if $\frac{-s+(n-1)}{2}-j$ is in the segment $\left[\frac{s-(m-1)}{2}+i, \
\frac{s+(m-1)}{2}\right]$. Then, applying the reducibility criterion
of \cite{HL}, we see that the induced representations
figuring in (\ref{dldldldl-1}) are irreducible. Consequently, the
normalized intertwining operator in (\ref{dldldldl-1}) is holomorphic
by an argument used in the proof of Lemma \ref{proof-main-res-1}. 
The other possibility is that 
$$
\frac{-s+(n-1)}{2}-j> \frac{s+(m-1)}{2}.
$$
Then using $s=(m+n)/2-\alpha$, we obtain $\alpha> m+j\ge m$. But in
this case our induced representation (\ref{dldldldl-2}) is irreducible. But since the
same is true for the induced representation 
$\Ind_{P_{m, n}(\Bbb Q_p)}^{GL_{m+n}(\Bbb Q_p)}\left(|\det |_p^{s/2}
 \chi_p \otimes |\det |_p^{-s/2}\mu_p\right)$, $p<\infty$ by \cite{Z}
for $\chi_p\neq \mu_p$, and, by Lemma \ref{proof-main-res-2-1}, 
it follows that the
global induced representation 
$
\Ind_{P_{m, n}(\Bbb A)}^{GL_{m+n}(\Bbb A)}\left(|\det |^{s/2}
 \chi \otimes |\det |^{-s/2}\mu\right)
$
is irreducible. This completes the proof of the theorem.

\end{document}